\documentclass[12pt]{amsart}

\usepackage[T1]{fontenc}
\usepackage[utf8]{inputenc}
\usepackage{amsmath,amssymb,amsthm,mathtools}
\usepackage[margin=1.1in]{geometry}
\usepackage{enumitem}
\usepackage{graphicx}
\usepackage{booktabs}
\usepackage{xcolor}
\usepackage{microtype}
\usepackage{hyperref}

\hypersetup{
    colorlinks=true,
    linkcolor=blue!60!black,
    citecolor=blue!60!black,
    urlcolor=blue!60!black
}

\theoremstyle{plain}
\newtheorem{theorem}{Theorem}[section]
\newtheorem{proposition}[theorem]{Proposition}
\newtheorem{lemma}[theorem]{Lemma}
\newtheorem{corollary}[theorem]{Corollary}

\theoremstyle{definition}
\newtheorem{definition}[theorem]{Definition}

\theoremstyle{remark}
\newtheorem{remark}[theorem]{Remark}

\newcommand{\R}{\mathbb R}
\newcommand{\N}{\mathbb N}

\newcommand{\safeincludegraphics}[2][0.75\textwidth]{%
  \IfFileExists{#2}{%
    \includegraphics[width=#1]{#2}%
  }{%
    \fbox{\parbox[c][2.0in][c]{0.82\textwidth}{\centering
      Figure file not found: \texttt{\detokenize{#2}}}}%
  }%
}

\title[The 1D $(p,q)$-Laplacian Fu\v{c}ik Spectrum]{
A Time-Map Description of the One-Dimensional
\texorpdfstring{$(p,q)$}{(p,q)}-Laplacian Fu\v{c}ik Spectrum
}

\author{Jingfan Zheng} \author{Adolfo Rumbos}
\date{}

\begin{document}
\begin{abstract}
We study a one-dimensional Fu\v{c}ik-type boundary value problem (BVP) driven by the nonhomogeneous, one-dimensional
$(p,q)$-Laplacian,
\[
    -\left(|u'|^{p-2}u' + |u'|^{q-2}u'\right)'
    =
    \lambda (u^+)^{p-1}
    -
    \mu (u^-)^{q-1},
    \qquad
    u(0)=u(L)=0,
    \qquad L>0,
\]
where $1<q<p$ and $\lambda,\mu\in\R$. The problem is asymmetric: the
positive component is paired with the $p$-power and the negative component
with the $q$-power. This asymmetry prevents the standard homogeneous
$p$-Laplacian scaling argument from producing explicit Fu\v{c}ik curves.

We first prove a quadrant reduction theorem. Any positive nodal interval for a solution
forces $\lambda>0$, and any negative nodal interval forces $\mu>0$.
Hence, all sign-changing solutions occur for values of $(\lambda,\mu)$
in the positive quadrant. Outside the positive quadrant, only one-sign
solutions can occur.

We also formulate the equation as an autonomous planar system and establish
existence and uniqueness for its initial value problem in the positive quadrant.
We then impose the second Dirichlet condition through a shooting formulation
and show that every zero of a nontrivial Dirichlet solution is simple and
isolated. This provides the dynamical foundation for the subsequent
construction and gluing of nodal humps of solutions.

Inside the positive quadrant, we use a classical one-dimensional time-map
method to obtain a projected-geometry description of the spectrum, or the set of values of $(\lambda, \mu)$ for which there exist nontrivial solutions. Every
nodal solution has a common energy $E>0$ shared by all positive and negative
humps. If a solution has $m$ positive humps and $n$ negative humps, then its
parameters are characterized by
\[
    2m\lambda^{-1/p}A(E)
    +
    2n\mu^{-1/q}B(E)
    =L
\]
for some $E>0$, where $A$ and $B$ are parameter-free time maps. We prove
that $A$ is a strictly decreasing function of $E$, $B$ is strictly increasing, and the
sign-changing solutions of the BVP occur in the threshold regions
\[
    \Pi_{m,n}(L)
    =
    \{(\lambda,\mu):M_{m,n}(\lambda,\mu)\le L\}.
\]
Their boundaries are nonincreasing functions
$\mu=\mu^*_{m,n}(\lambda)$ with asymptotes
\[
    \lambda=\left(\frac{m\pi_p}{L}\right)^p,
    \qquad
    \mu=\left(\frac{n\pi_q}{L}\right)^q,
\]
where, for $r>1$,
\[
    \pi_r
    :=
    2(r-1)^{1/r}\int_0^1(1-t^r)^{-1/r}\,dt.
\]

\end{abstract}

\maketitle

\section{Introduction}

The Fu\v{c}ik spectrum was introduced to study boundary value problems
with jumping nonlinearities \cite{Fucik1976}. In its classical form, one
studies equations whose behavior differs on their positive and negative
parts. The subject is closely related to the Ambrosetti--Prodi problem and
to nonlinear resonance with jumping nonlinearities; see, for instance,
\cite{AmbrosettiProdi1972,Dancer1977,Dancer1981,Rynne2000}.

For the one-dimensional homogeneous $p$-Laplacian, the Dirichlet problem
\[
    -(|u'|^{p-2}u')'
    =
    \alpha (u^+)^{p-1}
    -
    \beta (u^-)^{p-1},
    \qquad
    u(0)=u(L)=0,
    \qquad L>0,
\]
has a particularly transparent nodal description. The reason is scaling:
once the sign and parameter are fixed, all humps have a length independent
of their amplitude. Thus nodal solutions are obtained by gluing positive
and negative copies of homogeneous $p$-trigonometric humps. This structure
is closely connected to the generalized trigonometric functions studied in
\cite{Lindqvist1995}. The beginning and structure of the $p$-Laplacian
Fu\v{c}ik spectrum have been studied extensively; see
\cite{CuestaFigueiredoGossez1999,DancerPerera2001,Perera2002,Perera2004}.
Related periodic problems at resonance on the Dancer--Fu\v{c}ik spectrum
were studied by Bliss, Buerger, and Rumbos in
\cite{BlissBuergerRumbos2011}.

For a nontrivial solution $u$, a \emph{nodal interval} is a connected
component of
\[
    \{x\in(0,L):u(x)\ne0\}.
\]
On each nodal interval the solution has a constant sign. When the zeros
are ordered, the nodal intervals are precisely the intervals between
successive zeros, with the boundary points $0$ and $L$ regarded as zeros
because of the Dirichlet boundary conditions.

This paper studies the corresponding one-dimensional question for the
nonhomogeneous $(p,q)$-Laplacian when $p\ne q$. Problems involving the
$(p,q)$-Laplacian have been actively studied in nonlinear analysis,
especially in relation to nonhomogeneous operators, concave--convex
nonlinearities, and nonlinear eigenvalue problems
\cite{MaranoMosconi2018,FariaMiyagakiMotreanuTanaka2014,
MaranoMosconiPapageorgiou2018,MaranoMarinoPapageorgiou2019,
PapageorgiouRadulescuRepovs2019}.

For $r>1$, define
\[
    \phi_r(s):=|s|^{r-2}s,\qquad\text{for } s \in \mathbb R
\]

We consider
\begin{equation}\label{eq:model}
\begin{cases}
-\left(\phi_p(u')+\phi_q(u')\right)'
=
\lambda (u^+)^{p-1}
-
\mu (u^-)^{q-1},
&0<x<L,\\[4pt]
u(0)=u(L)=0,
\end{cases}
\end{equation}
where
\[
    L>0,
    \qquad
    1<q<p,
    \qquad
    \lambda,\mu\in\R.
\]

The operator $-\left(\phi_p(u')+\phi_q(u')\right)'$ contains two different gradient
homogeneities. Consequently, the amplitude of a hump of a solution cannot
be scaled away, and the usual explicit gluing argument for the homogeneous
$p$-Laplacian does not directly apply.

The same asymmetric $(p,q)$-Laplacian Fu\v{c}ik equation was considered by
Djeffal and Benselhoub \cite{DjeffalBenselhoub2023}. Working on bounded
domains, they combined Lusternik--Schnirelmann theory with a constrained
min--max construction to obtain families of continuous decreasing curves
contained in the Fu\v{c}ik spectrum. Their variational approach applies
beyond one space dimension and establishes the existence of selected
spectral families.

The purpose of the present paper is complementary to what has been done
in higher-dimensional settings. Restricting to one space dimension, we
exploit the autonomous ODE structure to classify solutions by their nodal
counts and by a common Hamiltonian energy. In particular, we do not claim
novelty for defining the $(p,q)$-Laplacian Fu\v{c}ik spectrum, nor for the
formal use of a one-dimensional quadrature. Our contribution is a
necessary-and-sufficient time-map description of the set of parameter
values $(\lambda,\mu)$ for which \eqref{eq:model} has a nontrivial
solution. We also obtain explicit boundary formulas and asymptotes for the projected
spectral regions.

The Fu\v{c}ik spectrum associated with \eqref{eq:model} is the set
\[
    \Sigma_{p,q}(L)
    :=
    \left\{
        (\lambda,\mu)\in\R^2:
        \eqref{eq:model}\text{ has a nontrivial weak solution}
    \right\}.
\]

The loss of homogeneity does not destroy the one-dimensional nodal
structure. Instead, it changes the geometry of the regions of the
Fu\v{c}ik spectrum. In the homogeneous $p$-Laplacian problem, the
amplitude of every nodal hump of a solution can be scaled away. So, fixing
the signs and nodal counts produces an equation directly involving the
Fu\v{c}ik parameters only. The resulting components of the Fu\v{c}ik spectrum are curves in the
$(\lambda,\mu)$-plane \cite{CuestaFigueiredoGossez1999,DancerPerera2001}.

For \eqref{eq:model}, the energy of the humps cannot be eliminated. The
natural object is therefore first the energy-lifted relation
\[
    \mathcal L_{m,n}(E;\lambda,\mu)=L
    \qquad
    \text{(total nodal length $=$ interval length)}
\]
in $(E,\lambda,\mu)$-space. The corresponding region in the parameter
plane is its projection,
\[
    \Pi_{m,n}(L)
    =
    \left\{
        (\lambda,\mu)\in(0,\infty)^2:
        \exists E>0,\ 
        \mathcal L_{m,n}(E;\lambda,\mu)=L
    \right\}.
\]

For fixed $(\lambda,\mu)$, define
\[
    M_{m,n}(\lambda,\mu)
    :=
    \min_{E>0}\mathcal L_{m,n}(E;\lambda,\mu),
\]
where the existence of the minimum will be proved below. The projection
then produces the threshold-region condition
\[
    \Pi_{m,n}(L)
    =
    \left\{
        (\lambda,\mu)\in(0,\infty)^2:
        M_{m,n}(\lambda,\mu)\le L
    \right\}.
\]
Thus, the main contribution of this paper is a classification of the
projected spectral geometry.

To explain the construction, define
\[
    \mathcal B(s):=\phi_p(s)+\phi_q(s), \qquad\text{for } s \in \mathbb R
\]
and
\[
    h(\xi)
    :=
    \frac{p-1}{p}\xi^p
    +
    \frac{q-1}{q}\xi^q,
    \qquad \xi\ge0.
\]

If $u$ is positive on a nodal interval, then
\[
    -(\mathcal B(u'))'=\lambda u^{p-1},
\]
and the associated Hamiltonian identity is
\[
    h(|u'|)+\frac{\lambda}{p}u^p=E.
\]
Similarly, if $u<0$ and $v=-u>0$, then
\[
    h(|v'|)+\frac{\mu}{q}v^q=E.
\]
Adjacent humps have the same energy because the solution is $C^1$ and,
at each nodal point, both potential terms vanish. Thus the energy at a
nodal point is determined entirely by the common derivative there.

Before using this identity to construct time maps, we formulate
\eqref{eq:model} as a planar autonomous system in the variables $(u,w)$,
where $w=\mathcal B(u')$. For $\lambda,\mu>0$, we establish well-posedness
of the associated initial value problem. We then impose the Dirichlet
conditions by starting at $u(0)=0$ with a nonzero initial slope and requiring the
resulting trajectory to return to $u=0$ at $x=L$. The uniqueness of the initial value problem
also shows that a nontrivial Dirichlet solution cannot have a double zero;
hence its nodal points are isolated. Only after these steps do we introduce
the time maps and the spectral projection.

This gives the half-hump time maps
\[
    \tau_+(E;\lambda)
    =
    \int_0^{(pE/\lambda)^{1/p}}
    \frac{ds}{\eta\left(E-\frac{\lambda}{p}s^p\right)},
\]
and
\[
    \tau_-(E;\mu)
    =
    \int_0^{(qE/\mu)^{1/q}}
    \frac{ds}{\eta\left(E-\frac{\mu}{q}s^q\right)},
\]
where $\eta=h^{-1}$. After rescaling,
\[
    \tau_+(E;\lambda)=\lambda^{-1/p}A(E),
    \qquad
    \tau_-(E;\mu)=\mu^{-1/q}B(E).
\]

Hence, if a solution has $m$ positive humps and $n$ negative humps, its
length condition is
\[
    2m\lambda^{-1/p}A(E)
    +
    2n\mu^{-1/q}B(E)
    =L.
\]
Accordingly, we define
\[
    \mathcal L_{m,n}(E;\lambda,\mu)
    =
    2m\lambda^{-1/p}A(E)
    +
    2n\mu^{-1/q}B(E),
\]
and $M_{m,n}(\lambda,\mu)$ is the minimum of this function over $E>0$.

We prove that
\[
    \Pi_{m,n}(L)
    =
    \{(\lambda,\mu):M_{m,n}(\lambda,\mu)\le L\}.
\]
The boundary is written as $\mu=\mu^*_{m,n}(\lambda)$, where
\[
    \mu^*_{m,n}(\lambda)
    =
    \inf_{E:\,L>2m\lambda^{-1/p}A(E)}
    \left[
        \frac{2nB(E)}
        {L-2m\lambda^{-1/p}A(E)}
    \right]^q.
\]

The paper is organized as follows.
Section~\ref{sec:scope} gives the quadrant reduction and explains the model.
Section~\ref{sec:preliminaries} establishes the weak formulation and
regularity. Section~\ref{sec:ivp} formulates the associated planar system and
establishes local existence and uniqueness for the initial value problem away
from the equilibrium. Section~\ref{sec:energy} proves the Hamiltonian identity,
completes the well-posedness argument at the equilibrium, and gives global
continuation. Section~\ref{sec:bvp} imposes the second Dirichlet boundary
condition through a shooting formulation and proves that the zeros of every
nontrivial Dirichlet solution are simple and isolated.
Section~\ref{sec:time_maps} constructs individual humps, and
Section~\ref{sec:nodal} gives the nodal reduction.
Section~\ref{sec:asymptotics} studies the parameter-free time maps.
Section~\ref{sec:one_sign} treats the positive and negative one-hump
solutions and compares their spectral geometry with the classical
homogeneous $p$-Laplacian.
Section~\ref{sec:branches} gives the sign-changing spectral
characterization, and Section~\ref{sec:boundary} gives the boundary
formulas and asymptotes.
The final section summarizes the resulting time-map description of the spectrum.

\section{Scope, Quadrants, and Model Choice}
\label{sec:scope}

The parameters $\lambda$ and $\mu$ are allowed to be arbitrary real
numbers. However, sign-changing solutions can occur only for parameter
values in the positive quadrant of the $(\lambda,\mu)$-plane.

\begin{lemma}[Positive humps force $\lambda>0$]
\label{lem:positive_requires_lambda}
Let $u$ be a nontrivial weak solution of \eqref{eq:model}. If $u$ has a
positive nodal interval, then $\lambda>0$.
\end{lemma}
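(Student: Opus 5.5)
Let $I=(a,b)$ be a positive nodal interval of $u$. Since $u$ is nontrivial with $u(0)=u(L)=0$, and nodal intervals are components of $\{u\neq0\}$, the endpoints satisfy $u(a)=u(b)=0$ and $u>0$ on $(a,b)$. The plan is to test the weak formulation of \eqref{eq:model} with the function $\varphi:=u\,\chi_{[a,b]}$, i.e.\ $u$ on $[a,b]$ and $0$ elsewhere. Because $u\in W^{1,p}_0(0,L)$ is continuous and vanishes at $a$ and $b$, this $\varphi$ lies in $W^{1,p}_0(0,L)$, with $\varphi'=u'\chi_{(a,b)}$ a.e. In particular, it is an admissible test function in the weak formulation (the natural test space is $W^{1,p}_0$, since $p>q$).

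Testing gives
\[
\int_a^b \bigl(|u'|^{p}+|u'|^{q}\bigr)\,dx=\int_a^b\bigl(\lambda (u^+)^{p-1}-\mu(u^-)^{q-1}\bigr)u\,dx=\lambda\int_a^b u^{p}\,dx,
\]
because $u^-=0$ and $u^+=u$ on $(a,b)$. The right-hand integral $\int_a^b u^p$ is strictly positive since $u>0$ on $(a,b)$. The left-hand side is also strictly positive. If it were zero, then $u'=0$ a.e.\ on $(a,b)$, so $u$ would be constant there. Combined with $u(a)=0$ and continuity, this would give $u\equiv0$ on $(a,b)$, contradicting positivity. Dividing then yields $\lambda=\int_a^b(|u'|^p+|u'|^q)\big/\int_a^b u^p>0$.

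The only delicate point is the admissibility of the truncated test function, namely that zero extension of $u|_{[a,b]}$ stays in $W^{1,p}_0$. I would justify it either through the standard fact that $\max(u,0)$ restricted to a component vanishing at its endpoints is Sobolev, or directly: $\varphi$ is absolutely continuous on $[0,L]$, since it is a piecewise absolutely continuous function glued continuously at $a,b$, with derivative in $L^p$. This argument does not need the regularity section, and it works even if $a=0$ or $b=L$.
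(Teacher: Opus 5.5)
Your proof is correct, but it takes a different route from the paper.

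The paper argues pointwise through the flux. It invokes Proposition~\ref{prop:regularity} (a forward reference) to get $u\in C^1[0,L]$ and $w=\mathcal B(u')\in C^1[0,L]$ with $-w'=\lambda u^{p-1}$ on $(a,b)$. If $\lambda\le0$, then $w$ is nondecreasing. The endpoint signs $u'(a)\ge0\ge u'(b)$ give $w(a)\ge0\ge w(b)$, so $w\equiv0$. Hence $u'=\mathcal B^{-1}(w)\equiv0$ on $(a,b)$, contradicting $u>0$.

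You instead stay inside the weak formulation and test with the zero extension of $u|_{[a,b]}$. This test function is admissible for exactly the reason you give: it is absolutely continuous on $[0,L]$, vanishes at $0$ and $L$, and has derivative $u'\chi_{(a,b)}\in L^p$. Testing yields the Rayleigh-type identity $\int_a^b(|u'|^p+|u'|^q)\,dx=\lambda\int_a^b u^p\,dx$.

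Your version buys two things.
\begin{enumerate}
\item It needs neither the regularity result nor $C^1$ control of $u'$ up to the endpoints.
\item It is quantitative. Combine it with the one-dimensional Poincar\'e inequality $\int_a^b|u'|^p\,dx\ge(\pi_p/(b-a))^p\int_a^b u^p\,dx$ and with $\int_a^b|u'|^q\,dx>0$. This gives $\lambda>(\pi_p/(b-a))^p$, consistent with the threshold in Proposition~\ref{prop:positive_one_hump}.
\end{enumerate}
The same test function on a negative nodal interval gives $\int_a^b(|u'|^p+|u'|^q)\,dx=\mu\int_a^b(u^-)^q\,dx$, so Lemma~\ref{lem:negative_requires_mu} follows at once too.

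The paper's argument, in exchange, is written in the flux and phase-plane language on which the rest of the paper is built. It also shows directly that a nonpositive $\lambda$ would force the flux to vanish identically on the nodal interval.
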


\begin{proof}
Let $J=(a,b)$ be a positive nodal interval. Then
\[
    u(a)=u(b)=0,
    \qquad
    u>0\quad\text{on }(a,b).
\]
By Proposition~\ref{prop:regularity} below, $u\in C^1[0,L]$ and
$w:=\mathcal B(u')\in C^1[0,L]$. On $J$,
\[
    -w'=\lambda u^{p-1}.
\]
Assume that $\lambda\le0$. Then
\[
    w'=-\lambda u^{p-1}\ge0,
\]
so $w$ is nondecreasing. Because $u(a)=0$ and $u>0$ immediately to the
right of $a$, we have $u'(a)\ge0$ and hence $w(a)\ge0$. Similarly,
$u'(b)\le0$ and $w(b)\le0$. A nondecreasing function satisfying
\[
    w(a)\ge0\ge w(b)
\]
must be identically zero on $J$. Therefore
\[
    u'=\mathcal B^{-1}(w)=0
\]
on $J$, contradicting $u>0$ there. Thus $\lambda>0$.
\end{proof}

\begin{lemma}[Negative humps force $\mu>0$]
\label{lem:negative_requires_mu}
Let $u$ be a nontrivial weak solution of \eqref{eq:model}. If $u$ has a
negative nodal interval, then $\mu>0$.
\end{lemma}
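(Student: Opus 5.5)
The plan is to mirror the proof of Lemma~\ref{lem:positive_requires_lambda}, with the roles of signs reversed. Let $J=(a,b)$ be a negative nodal interval, so that
\[
    u(a)=u(b)=0,
    \qquad
    u<0\quad\text{on }(a,b).
\]
By Proposition~\ref{prop:regularity}, $u\in C^1[0,L]$ and $w:=\mathcal B(u')\in C^1[0,L]$. On $J$ we have $u^+=0$ and $u^-=-u>0$, so the equation reads
\[
    -w'=-\mu (u^-)^{q-1},
    \qquad\text{i.e.}\qquad
    w'=\mu (u^-)^{q-1}.
\]

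We argue by contradiction and assume $\mu\le0$. Then $w'\le0$ on $J$, and by continuity of $w$ up to the endpoints, $w$ is nonincreasing on $[a,b]$. Next we read off the boundary signs. Since $u(a)=0$ and $u<0$ just to the right of $a$, the difference quotients give $u'(a)\le0$. Similarly, $u(b)=0$ and $u<0$ just to the left of $b$ give $u'(b)\ge0$. The map $\mathcal B$ is odd and strictly increasing, so it preserves sign; hence $w(a)\le0\le w(b)$.

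A nonincreasing function with $w(a)\le 0\le w(b)$ must vanish identically on $[a,b]$: indeed $0\le w(b)\le w(x)\le w(a)\le 0$ for every $x\in[a,b]$. Therefore $u'=\mathcal B^{-1}(w)=0$ on $J$. Then $u$ is constant on $J$, equal to $u(a)=0$, which contradicts $u<0$ there. Hence $\mu>0$.

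There is no real obstacle here. The only points needing care are the sign bookkeeping, namely that $-(u^-)^{q-1}$ carries the extra minus sign so that $w'=\mu(u^-)^{q-1}$, and the reliance on the $C^1$ regularity of $w$ from Proposition~\ref{prop:regularity} to justify monotonicity up to the endpoints. As a shortcut, one could instead apply Lemma~\ref{lem:positive_requires_lambda}'s argument to $v=-u$ on $J$, which satisfies $-(\mathcal B(v'))'=\mu v^{q-1}$ because $\mathcal B$ is odd; the lemma's proof uses only the sign of the coefficient, not the exponent.
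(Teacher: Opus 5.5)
Your proof is correct and is essentially the paper's argument: the paper takes exactly your ``shortcut,'' setting $v=-u$ on $J$, using the oddness of $\mathcal B$ to get $-(\mathcal B(v'))'=\mu v^{q-1}$, and then invoking Lemma~\ref{lem:positive_requires_lambda}. Writing out the mirrored monotonicity argument directly, as you do, has the small advantage of not applying a lemma formally stated for solutions of \eqref{eq:model} to a function that only satisfies a $q$-exponent equation on $J$. You flag this point yourself when you note that the argument uses only the sign of the coefficient.
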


\begin{proof}
Let $J=(a,b)$ be a negative nodal interval and put $v=-u>0$ on $J$.
By Proposition~\ref{prop:regularity} below, the flux equation is classical
on $J$. Since $\mathcal B$ is odd, $v$ satisfies
\[
    -(\mathcal B(v'))'
    =
    \mu v^{q-1}
    \qquad\text{on }J.
\]
Applying Lemma~\ref{lem:positive_requires_lambda} to $v$, with $\mu$ in
place of $\lambda$, gives $\mu>0$.
\end{proof}

\begin{corollary}[Quadrant reduction]
\label{cor:quadrant_reduction}
If \eqref{eq:model} has a sign-changing solution, then
\[
    \lambda>0,
    \qquad
    \mu>0.
\]
\end{corollary}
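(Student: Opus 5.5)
The corollary follows by combining Lemma~\ref{lem:positive_requires_lambda} with Lemma~\ref{lem:negative_requires_mu}. The only work is to check that a sign-changing solution really has both a positive and a negative nodal interval.

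Let $u$ be a sign-changing weak solution of \eqref{eq:model}. By definition there exist points $x_1,x_2\in(0,L)$ with $u(x_1)>0$ and $u(x_2)<0$. By Proposition~\ref{prop:regularity}, $u\in C^1[0,L]$, so in particular $u$ is continuous. Hence the open set $\{x\in(0,L):u(x)\ne0\}$ contains $x_1$ and $x_2$.

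Let $J_1$ be the connected component of this set containing $x_1$. It is an open interval on which $u$ does not vanish, so by continuity and the intermediate value theorem $u$ keeps a constant sign on $J_1$. Since $u(x_1)>0$, the sign is positive, and $J_1$ is a positive nodal interval in the sense of the definition in the introduction. In the same way, the component $J_2$ containing $x_2$ is a negative nodal interval.

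Lemma~\ref{lem:positive_requires_lambda} applied to $J_1$ gives $\lambda>0$, and Lemma~\ref{lem:negative_requires_mu} applied to $J_2$ gives $\mu>0$. This proves the corollary.

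There is no real obstacle here. The one point to watch is that the endpoints of each component are zeros of $u$, namely $u(a)=u(b)=0$, which the lemmas use. This holds because an endpoint lying in $(0,L)$ is a boundary point of the component and hence not in the open nonvanishing set, while an endpoint equal to $0$ or $L$ is a zero by the Dirichlet conditions.
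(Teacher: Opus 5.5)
Your proof is correct and follows the paper's argument exactly: the paper likewise notes that a sign-changing solution has a positive and a negative nodal interval and then applies Lemmas~\ref{lem:positive_requires_lambda} and~\ref{lem:negative_requires_mu}. You additionally spell out why the components containing $x_1$ and $x_2$ are genuine nodal intervals of constant sign with zero endpoints (using continuity and the Dirichlet conditions), a routine check the paper leaves implicit.
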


\begin{proof}
A sign-changing solution has at least one positive nodal interval and at least one negative nodal interval. Apply Lemmas~\ref{lem:positive_requires_lambda}
and~\ref{lem:negative_requires_mu}.
\end{proof}

\begin{remark}[One-dimensional scope]
The method used here is intrinsically one-dimensional. It uses a Hamiltonian identity on ordered nodal intervals and glues humps of solutions by matching endpoint slopes. In higher dimensions, nodal sets are not ordered intervals, and the
scalar time-map construction does not directly apply. General regularity theory for quasilinear elliptic equations is available in much greater generality \cite{Lieberman1991}.
\end{remark}

\begin{remark}[The asymmetric model]
More generally, one can consider
\[
    -\left(\phi_p(u')+\phi_q(u')\right)'
    =
    \lambda(u^+)^{r-1}
    -
    \mu(u^-)^{s-1},
    \qquad \text{for }r,s>1.
\]
The corresponding half-hump maps are
\[
    \tau_r(E;\lambda)
    =
    \int_0^{(rE/\lambda)^{1/r}}
    \frac{d\xi}
    {\eta\left(E-\frac{\lambda}{r}\xi^r\right)}
\]
and
\[
    \tau_s(E;\mu)
    =
    \int_0^{(sE/\mu)^{1/s}}
    \frac{d\xi}
    {\eta\left(E-\frac{\mu}{s}\xi^s\right)}.
\]
The present paper focuses on $r=p$ and $s=q$, for which the two time maps
have complementary endpoint behavior.
\end{remark}

\section{Weak Formulation and Regularity}
\label{sec:preliminaries}

Let
\[
    X:=W_0^{1,p}(0,L).
\]
Since $1<q<p$ and $(0,L)$ is bounded,
\[
    X\hookrightarrow W_0^{1,q}(0,L),
    \qquad
    X\hookrightarrow C[0,L].
\]

\begin{definition}[Weak solution]
A function $u\in X$ is a weak solution of \eqref{eq:model} if, for every
$\varphi\in X$,
\begin{equation}\label{eq:weak}
    \int_0^L
    \left(\phi_p(u')+\phi_q(u')\right)\varphi'\,dx
    =
    \int_0^L
    \left[
        \lambda(u^+)^{p-1}
        -
        \mu(u^-)^{q-1}
    \right]\varphi\,dx.
\end{equation}
\end{definition}

Define
\[
    \mathcal B(s)
    :=
    \phi_p(s)+\phi_q(s)
    =
    |s|^{p-2}s+|s|^{q-2}s, \qquad \text{for }s \in \mathbb R
\]

\begin{lemma}[Basic properties of $\mathcal B$]
\label{lem:B_properties}
The map $\mathcal B:\R\to\R$ is continuous, odd, strictly increasing,
and onto. Hence it is a homeomorphism.
\end{lemma}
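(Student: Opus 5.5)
The plan is to verify the four properties of $\mathcal B$ one at a time, handling $\phi_r$ for a fixed $r>1$ first, and then to conclude that $\mathcal B$ is a homeomorphism by a general fact about monotone maps of $\R$.

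\emph{Continuity and oddness.} For $r>1$, the map $\phi_r(s)=|s|^{r-2}s$ is continuous on $\R\setminus\{0\}$. At $s=0$ we have $|\phi_r(s)|=|s|^{r-1}\to0$, because $r-1>0$. So $\phi_r$ is continuous on $\R$, including in the singular case $1<r<2$. It is odd because $|{-s}|^{r-2}(-s)=-|s|^{r-2}s$. Continuity and oddness then pass to the sum $\mathcal B=\phi_p+\phi_q$.

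\emph{Strict monotonicity.} On $(0,\infty)$ we have $\phi_r(s)=s^{r-1}$, which is strictly increasing since $r-1>0$. By oddness, $\phi_r$ is strictly increasing on $(-\infty,0)$. Moreover $\phi_r(s)<0=\phi_r(0)<\phi_r(t)$ whenever $s<0<t$. Hence each $\phi_r$ is strictly increasing on all of $\R$, and so is the sum $\mathcal B$.

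\emph{Surjectivity.} For $s>0$ we have $\mathcal B(s)\ge s^{p-1}\to\infty$ as $s\to\infty$, and by oddness $\mathcal B(s)\to-\infty$ as $s\to-\infty$. The intermediate value theorem then shows that $\mathcal B$ is onto $\R$.

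\emph{Homeomorphism.} It remains to show that the inverse is continuous. A continuous, strictly increasing bijection $\R\to\R$ has a strictly increasing inverse. A monotone surjection between intervals has no jump discontinuities, so $\mathcal B^{-1}$ is continuous. Alternatively, $\mathcal B$ maps open intervals onto open intervals, so it is an open map.

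There is no substantive obstacle here. The only point requiring care is continuity at $0$ when $r<2$, where $|s|^{r-2}$ blows up; this is handled by writing $|\phi_r(s)|=|s|^{r-1}$.
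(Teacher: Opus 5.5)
Your proof is correct and follows the same route as the paper's: continuity and oddness of each summand, strict monotonicity of the sum, limits $\pm\infty$ at $\pm\infty$ (giving surjectivity via the intermediate value theorem), and continuity of the inverse of a strictly increasing bijection of $\R$. You simply supply details the paper calls immediate, notably continuity of $\phi_r$ at $0$ when $1<r<2$ via $|\phi_r(s)|=|s|^{r-1}$.
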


\begin{proof}
Continuity and oddness are immediate. Both summands are increasing, and
their sum is strictly increasing. Moreover,
\[
    \mathcal B(s)\to\pm\infty
    \qquad\text{as }s\to\pm\infty,
\]
respectively. Thus $\mathcal B$ is a continuous strictly increasing
bijection and therefore has a continuous inverse.
\end{proof}

\begin{proposition}[Regularity and flux equation]
\label{prop:regularity}
Let $u\in X$ be a weak solution of \eqref{eq:model}. Then
\[
    u\in C^1[0,L].
\]
Moreover, if
\[
    w:=\mathcal B(u'),
\]
then $w\in C^1[0,L]$ and
\begin{equation}\label{eq:flux}
    -w'
    =
    \lambda(u^+)^{p-1}
    -
    \mu(u^-)^{q-1}
    \qquad\text{on }[0,L].
\end{equation}
\end{proposition}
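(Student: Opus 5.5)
The plan is the standard one-dimensional argument: read the weak formulation as a statement about the distributional derivative of the flux, and then invert $\mathcal B$ using Lemma~\ref{lem:B_properties}.

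First, set $f:=\lambda(u^+)^{p-1}-\mu(u^-)^{q-1}$. Since $X\hookrightarrow C[0,L]$, the function $u$ is continuous, so $f\in C[0,L]$. Since $u'\in L^p$, we have $\phi_p(u')\in L^{p'}$ and $\phi_q(u')\in L^{q'}\subset L^1$. Hence $w:=\mathcal B(u')\in L^1(0,L)$.

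Next, test \eqref{eq:weak} with $\varphi\in C_c^\infty(0,L)\subset X$. This gives $\int w\varphi'\,dx=\int f\varphi\,dx$, i.e.\ $w'=-f$ in the sense of distributions. Define
\[
    F(x):=\int_0^x f(t)\,dt,
\]
which lies in $C^1[0,L]$. Then $(w+F)'=0$ in $\mathcal D'(0,L)$. By the du Bois-Reymond lemma (a distribution with zero derivative on an interval is constant), $w=c-F$ almost everywhere for some constant $c$. Redefine $w$ as this continuous representative. Then $w\in C^1[0,L]$ and $-w'=f$ pointwise on $[0,L]$, which is \eqref{eq:flux}.

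Then recover $u'$. By Lemma~\ref{lem:B_properties}, $\mathcal B^{-1}$ is a homeomorphism of $\R$. So $u'=\mathcal B^{-1}(w)$ almost everywhere, and the right-hand side is continuous on $[0,L]$. Since $u\in W^{1,p}$ is absolutely continuous with $u(x)=\int_0^x u'\,dt$, and the integrand agrees a.e.\ with a continuous function, $u$ is $C^1[0,L]$ with $u'=\mathcal B^{-1}(w)$ everywhere. In particular $\mathcal B(u')=w$ holds pointwise, so the identification of the flux is consistent.

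The only delicate point is the measure-theoretic bookkeeping. One must pass from an a.e.\ identity to a genuinely continuous representative, and check that $w$ is integrable so that the distributional argument applies. Both are routine once $w\in L^1$ is noted. Nothing beyond $L^1$ is needed, because the one-dimensional structure lets $w$ be integrated explicitly, so no Lieberman-type regularity theory is required.
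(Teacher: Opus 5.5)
Your proposal is correct and follows the same route as the paper. You use $X\hookrightarrow C[0,L]$ to get a continuous right-hand side, read the weak formulation as the distributional identity $w'=-f$ to obtain a $C^1$ representative of the flux, and use continuity of $\mathcal B^{-1}$ to upgrade $u'$. You also supply the bookkeeping the paper leaves implicit: that $w\in L^1$, the du Bois-Reymond step, and the passage from a.e.\ identities to everywhere identities.
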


\begin{proof}
Since
\[
    u\in X\hookrightarrow C[0,L],
\]
the function
\[
    f_u(x)
    :=
    \lambda(u^+(x))^{p-1}
    -
    \mu(u^-(x))^{q-1}, \ \ \text{for }x \in [0,L]
\]
is continuous. The weak formulation implies
\[
    (\mathcal B(u'))'=-f_u
\]
in the sense of distributions. Hence,
\[
    w=\mathcal B(u')
\]
has a $C^1$ representative satisfying
\[
    w'=-f_u.
\]
Since $\mathcal B^{-1}$ is continuous,
\[
    u'=\mathcal B^{-1}(w)
\]
is continuous, and therefore $u\in C^1[0,L]$.
\end{proof}

\section{The Associated Dynamical System and the Initial Value Problem}
\label{sec:ivp}

The regularity result of Section~\ref{sec:preliminaries} suggests using the
flux as a second phase variable. Set
\[
    w:=\mathcal B(u')
\]
and define
\[
    g_{\lambda,\mu}:\R\to\R,
    \qquad
    g_{\lambda,\mu}(u)
    :=
    \lambda(u^+)^{p-1}
    -
    \mu(u^-)^{q-1}.
\]
Since $\mathcal B$ is a homeomorphism of $\R$ onto $\R$, the differential
equation in \eqref{eq:model} is equivalent to the autonomous planar system
\begin{equation}\label{eq:dynamical_system}
\begin{cases}
    u'=\mathcal B^{-1}(w),\\[3pt]
    w'=-g_{\lambda,\mu}(u).
\end{cases}
\end{equation}
Thus a solution of the differential equation on an interval
$I\subset\R$ may be viewed as a trajectory
\[
    x\longmapsto (u(x),w(x)),
    \qquad x\in I,
\]
in the $(u,w)$-plane. At this stage no boundary condition at $x=L$ is being
imposed; we first study the initial value problem for the system in \eqref{eq:dynamical_system}.

\begin{proposition}[Equivalence with the planar system]
\label{prop:system_equivalence}
Let $I\subset\R$ be an interval. If $u$ is a weak solution of the
differential equation in \eqref{eq:model} on $I$ and
$w=\mathcal B(u')$, then $(u,w)\in C^1(I,\R^2)$ and satisfies
\eqref{eq:dynamical_system}. Conversely, if
$(u,w)\in C^1(I,\R^2)$ satisfies \eqref{eq:dynamical_system}, then
$w=\mathcal B(u')$ and $u$ satisfies the differential equation in
\eqref{eq:model} in the weak sense on $I$.
\end{proposition}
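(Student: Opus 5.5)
The plan is to prove the two directions separately, reusing the argument of Proposition~\ref{prop:regularity} localized to a general interval $I$.

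For the forward direction, let $u$ be a weak solution of the differential equation on $I$. Here this means $u\in W^{1,p}_{\mathrm{loc}}(I)$ and the identity \eqref{eq:weak} holds for all test functions $\varphi\in C_c^\infty(I)$. By Sobolev embedding $u$ is continuous on $I$, so $f_u:=g_{\lambda,\mu}(u)$ is continuous. The weak identity states that the distributional derivative of the locally integrable function $w=\mathcal B(u')$ equals $-f_u$. Now fix $x_0\in I$ and set $W(x):=-\int_{x_0}^x f_u$. Then $W\in C^1(I)$ and $(w-W)'=0$ in $\mathcal D'(I)$. By the standard lemma that a distribution with zero derivative on an interval is constant, $w=W+c$ almost everywhere. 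Thus $w$ has a $C^1$ representative with $w'=-g_{\lambda,\mu}(u)$. By Lemma~\ref{lem:B_properties}, $\mathcal B^{-1}$ is continuous, so $u'=\mathcal B^{-1}(w)$ a.e. with a continuous right-hand side. Since $u$ is absolutely continuous, $u(x)=u(x_0)+\int_{x_0}^x\mathcal B^{-1}(w)$, hence $u\in C^1(I)$ and $u'=\mathcal B^{-1}(w)$ holds pointwise. This gives $(u,w)\in C^1(I,\R^2)$ solving \eqref{eq:dynamical_system}.

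For the converse, suppose $(u,w)\in C^1$ solves \eqref{eq:dynamical_system}. Applying the bijection $\mathcal B$ to the first equation gives $w=\mathcal B(u')$ pointwise. Then $u\in C^1(I)$, so $u\in W^{1,p}_{\mathrm{loc}}(I)$. Take $\varphi\in C_c^\infty(I)$, or more generally any $\varphi\in W^{1,p}$ with compact support in $I$ (or vanishing at the endpoints, in the setting of \eqref{eq:weak}). Since $w\in C^1$, integration by parts with vanishing boundary terms gives
\[
\int_I \mathcal B(u')\varphi'\,dx=-\int_I w'\varphi\,dx=\int_I g_{\lambda,\mu}(u)\varphi\,dx,
\]
which is exactly the weak formulation on $I$.

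The only delicate point is the precise meaning of "weak solution on $I$" when $I$ is not $(0,L)$ and no boundary values are imposed. I would fix it as the local version above, with test functions compactly supported in $I$. I would also note that if $I$ is closed, the $C^1$ statements extend to the endpoints by uniform continuity of the integrands. The rest is the distributional "zero derivative implies constant" step, which is standard.
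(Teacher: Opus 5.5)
Your proof is correct and follows the same route as the paper. In the forward direction, the distributional derivative of the flux equals the continuous function $-g_{\lambda,\mu}(u)$, and continuity of $\mathcal B^{-1}$ then upgrades $u$ to $C^1$; in the converse, you invert $\mathcal B$ and integrate by parts against compactly supported test functions. The one difference is that the paper simply cites Proposition~\ref{prop:regularity}, which is stated on $[0,L]$, whereas you redo that argument locally on a general interval $I$ with test functions in $C_c^\infty(I)$, which is the localization the statement implicitly needs.
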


\begin{proof}
The first assertion follows from Proposition~\ref{prop:regularity}. Conversely,
the first equation in \eqref{eq:dynamical_system} gives
\[
    u'=\mathcal B^{-1}(w),
\]
and hence
\[
    w=\mathcal B(u').
\]
The second equation then becomes
\[
    -(\mathcal B(u'))'
    =
    \lambda(u^+)^{p-1}-\mu(u^-)^{q-1}.
\]
Multiplication by a compactly supported test function and integration by parts
give the corresponding weak formulation in $I$.
\end{proof}

For prescribed data
\[
    x_0\in\R,
    \qquad
    (u_0,w_0)\in\R^2,
\]
consider the initial value problem
\begin{equation}\label{eq:ivp}
\begin{cases}
    u'=\mathcal B^{-1}(w),\\[3pt]
    w'=-\lambda(u^+)^{p-1}+\mu(u^-)^{q-1},\\[3pt]
    u(x_0)=u_0,\qquad w(x_0)=w_0.
\end{cases}
\end{equation}

\begin{proposition}[Local existence]
\label{prop:ivp_local_existence}
For every $\lambda,\mu\in\R$ and every initial state $(u_0,w_0)$,
\eqref{eq:ivp} has at least one local $C^1$ solution.
\end{proposition}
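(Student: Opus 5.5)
The plan is to apply the Peano existence theorem to the planar system \eqref{eq:ivp}, written as $Z'=F(Z)$ with $Z=(u,w)$ and
\[
    F(u,w):=\bigl(\mathcal B^{-1}(w),\,-g_{\lambda,\mu}(u)\bigr).
\]
Peano's theorem only needs $F$ to be continuous; no Lipschitz bound is required. Lipschitz continuity would in fact fail in general: $\mathcal B^{-1}$ is not Lipschitz at $w=0$ when $q<2$, and $u\mapsto (u^\pm)^{r-1}$ is not Lipschitz at $u=0$ when $r<2$. This is why uniqueness is handled separately later in the paper.

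\textbf{Step 1: $F$ is continuous.} By Lemma~\ref{lem:B_properties}, $\mathcal B$ is a homeomorphism of $\R$, so $\mathcal B^{-1}$ is continuous. The maps $u\mapsto u^+$ and $u\mapsto u^-$ are continuous and nonnegative. Since $p-1>0$ and $q-1>0$, the compositions $(u^+)^{p-1}$ and $(u^-)^{q-1}$ are continuous on $\R$, with value $0$ where the base vanishes. Hence $g_{\lambda,\mu}$ is continuous for every real $\lambda,\mu$, and so $F$ is continuous on $\R^2$.

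\textbf{Step 2: apply Peano.} Fix the closed rectangle
\[
    R=\{|u-u_0|\le1,\ |w-w_0|\le1\}
\]
and put $K=\max_R|F|$. Peano's theorem gives a $C^1$ solution on $[x_0-\delta,x_0+\delta]$ with $\delta=\min(1,1/K)$, or with any $\delta>0$ if $K=0$. To be self-contained, one can also prove this via Euler polygons and Arzel\`a--Ascoli applied to the integral equation
\[
    Z(x)=Z_0+\int_{x_0}^x F(Z(s))\,ds .
\]
A continuous solution of this integral equation is automatically $C^1$, because its integrand is continuous.

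\textbf{Step 3: interpretation.} By Proposition~\ref{prop:system_equivalence}, the local solution $(u,w)$ yields a local weak (indeed classical-flux) solution of the differential equation, with $w=\mathcal B(u')$.

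I expect no real obstacle here. The only point needing care is Step 1: continuity of the fractional powers at $0$, which uses $p,q>1$, and continuity of $\mathcal B^{-1}$, which uses strict monotonicity.
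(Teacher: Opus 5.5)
Your proof is correct and is the same as the paper's: the vector field $(u,w)\mapsto(\mathcal B^{-1}(w),-g_{\lambda,\mu}(u))$ is continuous on $\R^2$ for all real $\lambda,\mu$, so Peano's theorem gives a local $C^1$ solution. One slip in your side remark, which your argument does not use: since $\mathcal B(s)\sim\phi_q(s)$ as $s\to0$, we have $|\mathcal B^{-1}(w)|\sim|w|^{1/(q-1)}$ near $0$, so $\mathcal B^{-1}$ fails to be Lipschitz at $w=0$ when $q>2$, not when $q<2$.
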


\begin{proof}
The vector field
\[
    (u,w)
    \longmapsto
    \left(
        \mathcal B^{-1}(w),
        -\lambda(u^+)^{p-1}+\mu(u^-)^{q-1}
    \right)
\]
is continuous on $\R^2$. The assertion therefore follows from the Peano
existence theorem; see, for example, \cite{Hartman2002}.
\end{proof}

The vector field need not be locally Lipschitz on the coordinate axes for
arbitrary $1<q<p$. Nevertheless, away from the equilibrium $(0,0)$ the
special structure of the system gives local uniqueness.

\begin{proposition}[Local uniqueness away from the equilibrium]
\label{prop:ivp_local_uniqueness_nonzero}
Assume $\lambda,\mu>0$ and $(u_0,w_0)\ne(0,0)$. Then the local solution of
\eqref{eq:ivp} through $(u_0,w_0)$ is unique.
\end{proposition}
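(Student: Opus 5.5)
Since $(u_0,w_0)\ne(0,0)$, we have $u_0\ne0$ or $w_0\ne0$, and I would split into these two cases. In each case I would find a local coordinate in which the vector field is Lipschitz, or reduce the system to a scalar equation with a Lipschitz right-hand side.

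\emph{Case $u_0\ne0$.} Then $u\ne0$ near $x_0$ for any continuous solution, so $u$ keeps the sign of $u_0$. Say $u_0>0$. The second component is then $-\lambda u^{p-1}$, which is smooth on $\{u>0\}$. The first component $\mathcal B^{-1}(w)$ may fail to be Lipschitz at $w=0$, so the vector field need not be locally Lipschitz at $(u_0,0)$. To get around this, I would swap the roles of the variables. On a small interval the equation $w'=-\lambda u^{p-1}$ has $w'\le-\lambda(u_0/2)^{p-1}<0$, because $\lambda>0$ (this is where the sign of $\lambda$ is used). So $w$ is strictly decreasing and can serve as the independent variable. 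Writing $u=U(w)$, any solution satisfies
\[
    \frac{dU}{dw}=\frac{\mathcal B^{-1}(w)}{-\lambda U^{p-1}}, \qquad U(w_0)=u_0 .
\]
The right-hand side is continuous in $w$ and Lipschitz in $U$ near $U=u_0>0$. Hence $U$ is unique by Picard--Lindel\"of. The map $x\mapsto w(x)$ is then determined by the scalar autonomous equation $w'=-\lambda U(w)^{p-1}$, whose right-hand side is nonvanishing and continuous. Such an equation has a unique solution, obtained by separation of variables: $x-x_0=\int_{w_0}^{w}ds/(-\lambda U(s)^{p-1})$. The case $u_0<0$ is identical, using $\mu>0$ and $w'=\mu(u^-)^{q-1}>0$.

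\emph{Case $u_0=0$, $w_0\ne0$.} Now $\mathcal B^{-1}(w)$ is locally smooth near $w_0\ne0$, because $\mathcal B$ is $C^1$ with $\mathcal B'(s)>0$ for $s\ne0$, and the inverse function theorem applies. The first equation gives $u'$ bounded away from zero, with the sign of $w_0$. So $u$ is strictly monotone and can serve as the independent variable: $w=W(u)$ with
\[
    \frac{dW}{du}=\frac{-g_{\lambda,\mu}(u)}{\mathcal B^{-1}(W)},\qquad W(0)=w_0 .
\]
The right-hand side is continuous in $u$, since $g_{\lambda,\mu}$ is continuous even though it may fail to be Lipschitz at $u=0$, and it is Lipschitz in $W$ near $w_0$. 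Picard--Lindel\"of again gives uniqueness of $W$. Then $u$ solves $u'=\mathcal B^{-1}(W(u))$, whose right-hand side is nonvanishing and continuous, and this equation is uniquely solvable by the same quadrature argument.

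The main obstacle is making the change of independent variable rigorous in both directions. I must show that every $C^1$ solution of \eqref{eq:ivp} on a small interval gives rise to a graph solution of the reduced equation; this follows from the strict monotonicity established above. I must also show that a nonvanishing continuous scalar autonomous ODE has unique solutions. For the latter, I would use the inverse-function argument: $x\mapsto\int^{y(x)}ds/F(s)$ has derivative $1$. I would also check that the intervals on which $w$ (respectively $u$) is monotone can be taken uniform for two competing solutions. This follows by shrinking to a common neighborhood where continuity keeps $u$ away from $0$, respectively $w$ away from $0$.
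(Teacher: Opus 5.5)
Your proof is correct and takes essentially the same route as the paper: in each case you switch to whichever phase variable is strictly monotone as the independent variable, get a scalar ODE that is Lipschitz in the dependent variable, and recover the $x$-parametrization by quadrature. The only difference is how the cases are split. The paper uses $u$ as the independent variable whenever $w_0\ne0$ and uses $w$ only when $w_0=0$, $u_0\ne0$, whereas you use $w$ whenever $u_0\ne0$. Your remarks on passing between the two formulations in both directions and on taking a common interval for competing solutions spell out details the paper leaves implicit.
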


\begin{proof}
Suppose first that $w_0\ne0$. Then
\[
    u'(x_0)=\mathcal B^{-1}(w_0)\ne0.
\]
Thus $u$ is strictly monotone near $x_0$ and may be used as the independent
variable. Dividing the two equations in \eqref{eq:dynamical_system} gives
\begin{equation}\label{eq:w_as_function_of_u}
    \frac{dw}{du}
    =
    \frac{-g_{\lambda,\mu}(u)}{\mathcal B^{-1}(w)}.
\end{equation}
Since $w$ stays away from zero near $w_0$, the right-hand side is continuous
in $u$ and locally Lipschitz in the dependent variable $w$. Hence the scalar
initial value problem \eqref{eq:w_as_function_of_u} has a unique local
solution. Once $w$ is known as a function of $u$, the equation
\[
    \frac{dx}{du}
    =
    \frac{1}{\mathcal B^{-1}(w(u))}
\]
uniquely recovers the parametrization by $x$.

Suppose next that $w_0=0$ and $u_0\ne0$. Since $\lambda,\mu>0$,
\[
    g_{\lambda,\mu}(u_0)\ne0,
\]
and therefore
\[
    w'(x_0)=-g_{\lambda,\mu}(u_0)\ne0.
\]
Thus $w$ is strictly monotone near $x_0$ and may be used as the independent
variable. We obtain
\begin{equation}\label{eq:u_as_function_of_w}
    \frac{du}{dw}
    =
    \frac{\mathcal B^{-1}(w)}{-g_{\lambda,\mu}(u)}.
\end{equation}
Because $u$ stays away from zero near $u_0$, the denominator does not vanish,
and the right-hand side is continuous in $w$ and locally Lipschitz in the
dependent variable $u$. Hence \eqref{eq:u_as_function_of_w} has a unique
local solution. The parametrization is then uniquely recovered from
\[
    \frac{dx}{dw}
    =
    \frac{1}{-g_{\lambda,\mu}(u(w))}.
\]
This proves local uniqueness at every nonzero phase point.
\end{proof}

\begin{remark}[The equilibrium case]
\label{rem:equilibrium_deferred}
The preceding argument deliberately leaves the initial state $(0,0)$
untreated. At that point neither $u$ nor $w$ can be used immediately as a
new independent variable. Uniqueness at the equilibrium will follow from the
conserved Hamiltonian established in the next section.
\end{remark}

\section{Hamiltonian Identity and Global Well-Posedness}
\label{sec:energy}

Define
\[
    \Phi(s)
    :=
    \int_0^s\mathcal B(t)\,dt
    =
    \frac{|s|^p}{p}
    +
    \frac{|s|^q}{q}, \qquad\text{for } s \in \mathbb R
\]
and
\[
    H(s)
    :=
    s\mathcal B(s)-\Phi(s)
    =
    \frac{p-1}{p}|s|^p
    +
    \frac{q-1}{q}|s|^q, \qquad\text{for } s \in \mathbb R
\]
For $\xi\ge0$, set
\begin{equation}\label{eq:h_def}
    h(\xi)
    :=
    \frac{p-1}{p}\xi^p
    +
    \frac{q-1}{q}\xi^q.
\end{equation}
Then
\[
    H(s)=h(|s|), \qquad\text{for } s \in \mathbb R
\]
The function $h$ is strictly increasing from $[0,\infty)$ onto
$[0,\infty)$; let
\[
    \eta:=h^{-1}.
\]
Also define
\[
    \widehat H(y)
    :=
    \int_0^y\mathcal B^{-1}(\zeta)\,d\zeta.
\]

\begin{lemma}[Legendre identity]
\label{lem:legendre}
For every $s\in\R$,
\[
    \widehat H(\mathcal B(s))=h(|s|).
\]
\end{lemma}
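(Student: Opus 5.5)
The plan is to prove the identity by the classical Legendre-transform computation. We change variables in the integral defining $\widehat H$ via the homeomorphism $\mathcal B$ of Lemma~\ref{lem:B_properties}, and then integrate by parts.

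First we reduce to $s>0$. Since $\mathcal B$ is odd, so is $\mathcal B^{-1}$. Hence $\widehat H$ is even: substituting $\zeta\mapsto-\zeta$ gives $\widehat H(-y)=\widehat H(y)$. The right side $h(|s|)$ is also even in $s$, and both sides vanish at $s=0$. So it suffices to treat $s>0$.

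For $s>0$, put $y=\mathcal B(s)>0$ and substitute $\zeta=\mathcal B(t)$ with $t\in[0,s]$. Because $\mathcal B$ is continuous, strictly increasing, and $C^1$ on $(0,\infty)$ with locally integrable derivative near $0$, this is a legitimate Riemann--Stieltjes change of variables:
\[
    \widehat H(\mathcal B(s))=\int_0^{s} t\,d\mathcal B(t)=\int_0^s t\,\mathcal B'(t)\,dt .
\]
Here $\mathcal B'(t)=(p-1)t^{p-2}+(q-1)t^{q-2}$ for $t>0$. Then $t\mathcal B'(t)=(p-1)t^{p-1}+(q-1)t^{q-1}$, which is integrable near $0$ because $p,q>1$.

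There are two equivalent ways to finish. The first is direct integration, which gives $\frac{p-1}{p}s^p+\frac{q-1}{q}s^q=h(s)$. The second is integration by parts, which gives $s\mathcal B(s)-\int_0^s\mathcal B(t)\,dt=s\mathcal B(s)-\Phi(s)=H(s)=h(|s|)$. We would present the second form, since it makes the identity $\widehat H\circ\mathcal B=H$ transparent, and use the first as a check.

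The only delicate point is the change of variables at the endpoint $t=0$. When $q<2$, $\mathcal B'$ blows up there, and $\mathcal B^{-1}$ is not Lipschitz at $0$. To avoid any issue we would work on $[\varepsilon,s]$, where everything is $C^1$ and the substitution is elementary, and then let $\varepsilon\to0^+$. Both $\int_0^{\mathcal B(\varepsilon)}\mathcal B^{-1}$ and $\varepsilon\mathcal B(\varepsilon)-\Phi(\varepsilon)$ tend to $0$ by continuity, so the identity passes to the limit.
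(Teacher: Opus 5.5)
Your proposal is correct and follows the same route as the paper: substitute $\zeta=\mathcal B(t)$, then integrate by parts to obtain $s\mathcal B(s)-\Phi(s)=H(s)=h(|s|)$; your evenness reduction and $\varepsilon$-cutoff make explicit what the paper covers by citing the local absolute continuity of $\mathcal B$. One side remark is slightly off, though it does not affect the argument: for $q<2$ it is $\mathcal B$, not $\mathcal B^{-1}$, that fails to be Lipschitz at $0$, since $\mathcal B^{-1}(y)\sim y^{1/(q-1)}$ is flat there and $\mathcal B^{-1}$ is non-Lipschitz at $0$ only when $q>2$.
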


\begin{proof}
The function $\mathcal B$ is strictly increasing and locally
absolutely continuous. Using the change of variables $y=\mathcal B(t)$ and
then integrating by parts, we obtain for every $s\in\R$, 
\begin{align*}
    \widehat H(\mathcal B(s))
    &=
    \int_0^{\mathcal B(s)}\mathcal B^{-1}(y)\,dy\\
    &=
    \int_0^s t\,\mathcal B'(t)\,dt\\
    &=
    s\mathcal B(s)-\int_0^s\mathcal B(t)\,dt.
\end{align*}
Thus, for all $s\in\mathbb R$,
\[
    \widehat H(\mathcal B(s))=H(s)=h(|s|).
\]
\end{proof}

For $\lambda,\mu>0$, define the phase-space energy
\begin{equation}\label{eq:phase_energy}
    \mathcal E(u,w)
    :=
    \widehat H(w)
    +
    \frac{\lambda}{p}(u^+)^p
    +
    \frac{\mu}{q}(u^-)^q.
\end{equation}

\begin{proposition}[Hamiltonian identity]
\label{prop:global_energy}
Assume $\lambda,\mu>0$ and let $(u,w)$ be a $C^1$ solution of
\eqref{eq:dynamical_system} on an interval $I$. Then
\begin{equation}\label{eq:global_energy}
    \widehat H(w(x))
    +
    \frac{\lambda}{p}(u^+(x))^p
    +
    \frac{\mu}{q}(u^-(x))^q
    =E
\end{equation}
for some constant $E\ge0$ and all $x\in I$. If the trajectory is nontrivial,
then $E>0$.
\end{proposition}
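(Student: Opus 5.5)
The plan is to differentiate the energy along the trajectory and show the derivative vanishes identically. The only issue is that the composite maps involved are $C^1$ and satisfy the chain rule. First, $\widehat H$ is $C^1$ on $\R$ with $\widehat H'(y)=\mathcal B^{-1}(y)$: by Lemma~\ref{lem:B_properties}, $\mathcal B^{-1}$ is continuous, so this follows from the fundamental theorem of calculus. Second, the maps $u\mapsto \frac{\lambda}{p}(u^+)^p$ and $u\mapsto\frac{\mu}{q}(u^-)^q$ are $C^1$ on $\R$, since $p,q>1$. Their derivatives are $\lambda (u^+)^{p-1}$ and $-\mu(u^-)^{q-1}$. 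Note that at $u=0$ both one-sided derivatives vanish; this is where $p,q>1$ is used.

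Since $(u,w)\in C^1(I,\R^2)$, the chain rule applied to $\mathcal E(u(x),w(x))$ gives
\[
\frac{d}{dx}\mathcal E(u,w)=\mathcal B^{-1}(w)\,w'+\bigl(\lambda(u^+)^{p-1}-\mu(u^-)^{q-1}\bigr)u' .
\]
Substituting $u'=\mathcal B^{-1}(w)$ and $w'=-g_{\lambda,\mu}(u)$ from \eqref{eq:dynamical_system}, the two terms cancel exactly. Hence the derivative is zero on the interval $I$, and $\mathcal E(u,w)\equiv E$ is constant there. This gives \eqref{eq:global_energy}.

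For nonnegativity, Lemma~\ref{lem:legendre} gives $\widehat H(w)=\widehat H(\mathcal B(s))=h(|s|)\ge0$ with $s=\mathcal B^{-1}(w)$, since $\mathcal B$ is onto. Moreover $h(|s|)=0$ only when $s=0$, that is, only when $w=0$. The potential terms are nonnegative because $\lambda,\mu>0$. So $E\ge0$, and $\mathcal E(u,w)=0$ if and only if $u^+=u^-=0$ and $w=0$, i.e.\ $(u,w)=(0,0)$.

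For the positivity claim, suppose $E=0$. Then $(u(x),w(x))=(0,0)$ for every $x\in I$, so the trajectory is the trivial equilibrium. By contraposition, a nontrivial trajectory, meaning one that is not identically $(0,0)$, has $E>0$. Note that this direction needs no uniqueness theory: a single point with $(u,w)\ne(0,0)$ forces $\mathcal E>0$ there, and constancy of $\mathcal E$ then gives $E>0$.

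There is no real obstacle here. The only point needing care is justifying the chain rule when $u$ crosses $0$, where the piecewise definition changes. That is handled by checking that the potential $\frac{\lambda}{p}(u^+)^p+\frac{\mu}{q}(u^-)^q$ is globally $C^1$ with derivative $g_{\lambda,\mu}$, which holds because $p,q>1$.
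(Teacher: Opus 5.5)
Your proposal is correct and follows essentially the same route as the paper. You differentiate the energy along the trajectory using $\widehat H'=\mathcal B^{-1}$ and the $C^1$ regularity of the potential terms (which is where $p,q>1$ enters), cancel the terms via the system, and then use the Legendre identity to get $E\ge0$, with $E>0$ as soon as $(u,w)\ne(0,0)$ at a single point.
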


\begin{proof}
Since $\widehat H'(y)=\mathcal B^{-1}(y)$ and
$u'=\mathcal B^{-1}(w)$,
\[
    \frac{d}{dx}\widehat H(w(x))
    =
    \mathcal B^{-1}(w(x))w'(x)
    =u'(x)w'(x).
\]
Because $p,q>1$, the functions $t\mapsto(t^+)^p$ and
$t\mapsto(t^-)^q$ are $C^1$. Therefore
\begin{align*}
\frac{d}{dx}
\left[
    \widehat H(w)
    +\frac{\lambda}{p}(u^+)^p
    +\frac{\mu}{q}(u^-)^q
\right]
&=
    u'w'
    +\lambda(u^+)^{p-1}u'
    -\mu(u^-)^{q-1}u'\\
&=
    u'\left[-\lambda(u^+)^{p-1}+\mu(u^-)^{q-1}\right]
    +\lambda(u^+)^{p-1}u'
    -\mu(u^-)^{q-1}u'\\
&=0.
\end{align*}
Hence the displayed quantity is constant.

By Lemma~\ref{lem:legendre},
\[
    \widehat H(w)
    =
    h\!\left(|\mathcal B^{-1}(w)|\right)
    \ge0,
\]
and the two potential terms are nonnegative. Thus, $E\ge0$. If the trajectory
is nontrivial, then either $u\ne0$ somewhere or $w\ne0$ somewhere, and at
such a point at least one of the three nonnegative terms is strictly positive.
Hence, $E>0$.
\end{proof}

\begin{corollary}[Uniqueness at the equilibrium]
\label{cor:equilibrium_uniqueness}
Assume $\lambda,\mu>0$. The only solution of \eqref{eq:dynamical_system}
that passes through $(u,w)=(0,0)$ is the equilibrium solution
\[
    (u,w)\equiv(0,0).
\]
\end{corollary}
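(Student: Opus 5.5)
The plan is to apply the Hamiltonian identity, Proposition~\ref{prop:global_energy}, directly. Let $(u,w)$ be a $C^1$ solution of \eqref{eq:dynamical_system} on an interval $I$, with $\lambda,\mu>0$, and suppose $(u(x_0),w(x_0))=(0,0)$ for some $x_0\in I$. The energy $\mathcal E(u(x),w(x))$ is constant on $I$. Evaluating it at $x_0$ gives
\[
    E=\widehat H(0)+\frac{\lambda}{p}(0^+)^p+\frac{\mu}{q}(0^-)^q=0,
\]
since $\widehat H(0)=0$ by definition. Thus $E=0$ along the entire trajectory.

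Next we exploit that each of the three summands is nonnegative. By Lemma~\ref{lem:legendre}, $\widehat H(w)=h(|\mathcal B^{-1}(w)|)\ge 0$, and both potential terms are nonnegative because $\lambda,\mu>0$. A sum of nonnegative terms equal to zero forces each term to vanish for every $x\in I$.

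From the vanishing terms we recover the trajectory. Since $h$ is strictly increasing with $h(0)=0$, the relation $h(|\mathcal B^{-1}(w(x))|)=0$ gives $\mathcal B^{-1}(w(x))=0$. Because $\mathcal B$ is a bijection with $\mathcal B(0)=0$ (Lemma~\ref{lem:B_properties}), this yields $w(x)=0$. Likewise, $\frac{\lambda}{p}(u^+)^p=0$ gives $u^+=0$, and $\frac{\mu}{q}(u^-)^q=0$ gives $u^-=0$, so $u(x)=0$. Hence $(u,w)\equiv(0,0)$ on $I$. Equivalently, by the last clause of Proposition~\ref{prop:global_energy}, a nontrivial trajectory has $E>0$, which contradicts $E=0$.

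There is no real obstacle here. The only point needing care is that the sign conditions $\lambda,\mu>0$ make the potential coercive, so that zero energy pins down the state. This is precisely where the hypothesis enters; without it, for $\lambda\le 0$, nontrivial solutions through the origin could exist. Combined with Proposition~\ref{prop:ivp_local_uniqueness_nonzero}, this gives uniqueness of the initial value problem at every phase point.
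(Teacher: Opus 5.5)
Your argument is correct and is the paper's proof: the trajectory through the origin has energy $E=0$, each of the three terms in $\mathcal E$ is nonnegative, and their sum vanishes only at $(0,0)$, so conservation of energy confines the solution to the equilibrium. One correction to your closing aside: $\lambda,\mu>0$ is what makes this short argument work, but it is not needed for the conclusion. Energy conservation holds for all real $\lambda,\mu$, and $E=0$ gives $\frac{q-1}{q}|u'|^q\le h(|u'|)\le \frac{|\lambda|}{p}|u|^p+\frac{|\mu|}{q}|u|^q\le C|u|^q$ wherever $|u|\le 1$. Hence $|u'|\le C^{1/q}|u|$, and Gronwall forces $u\equiv 0$ (hence $w\equiv 0$) for any $\lambda,\mu$, so your suggestion that nontrivial solutions through the origin might exist for $\lambda\le 0$ is unfounded.
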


\begin{proof}
A trajectory through $(0,0)$ has energy $E=0$. Every term in
\eqref{eq:phase_energy} is nonnegative, and their sum vanishes only at the
origin. Conservation of energy therefore forces $(u(x),w(x))=(0,0)$
throughout its interval of existence.
\end{proof}

\begin{theorem}[Global existence and uniqueness for the initial value problem]
\label{thm:ivp_wellposed}
Assume $\lambda,\mu>0$. For every
\[
    x_0\in\R,
    \qquad
    (u_0,w_0)\in\R^2,
\]
the initial value problem \eqref{eq:ivp} has a unique global solution
\[
    (u,w)\in C^1(\R;\R^2).
\]
Moreover,
\[
    \mathcal E(u(x),w(x))
    =
    \mathcal E(u_0,w_0)
    \qquad\text{for every }x\in\R.
\]
where $\mathcal E$ is defined in \eqref{eq:phase_energy}.
\end{theorem}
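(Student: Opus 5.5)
The plan is to combine three earlier ingredients: local existence (Proposition~\ref{prop:ivp_local_existence}), local uniqueness at every phase point (Proposition~\ref{prop:ivp_local_uniqueness_nonzero} away from the origin and Corollary~\ref{cor:equilibrium_uniqueness} at the origin), and the conserved energy (Proposition~\ref{prop:global_energy}). The energy will give a priori bounds that rule out finite-time blow-up.

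\emph{Step 1: uniqueness on intervals.} Suppose $(u_1,w_1)$ and $(u_2,w_2)$ are two solutions on a common interval $I\ni x_0$ with the same initial data. If the data is $(0,0)$, Corollary~\ref{cor:equilibrium_uniqueness} shows both are identically $(0,0)$. Otherwise, let
\[
    S:=\{x\in I:(u_1,w_1)(x)=(u_2,w_2)(x)\}.
\]
This set is closed in $I$ by continuity and contains $x_0$. It is also open: at any $x_1\in S$ the common state is nonzero, because by Corollary~\ref{cor:equilibrium_uniqueness} a trajectory through the origin is the equilibrium, which would contradict nonzero data at $x_0$. So Proposition~\ref{prop:ivp_local_uniqueness_nonzero}, applied with initial point $x_1$, makes the solutions agree near $x_1$. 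By connectedness, $S=I$.

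\emph{Step 2: a priori bounds.} Let $E:=\mathcal E(u_0,w_0)$. By Proposition~\ref{prop:global_energy}, any solution satisfies $\mathcal E(u(x),w(x))=E$ on its domain. Since each term of $\mathcal E$ is nonnegative, we get
\[
    \frac{\lambda}{p}(u^+)^p\le E,\qquad \frac{\mu}{q}(u^-)^q\le E,\qquad \widehat H(w)\le E.
\]
The first two bound $|u|$ in terms of $E,\lambda,\mu$. By Lemma~\ref{lem:legendre}, $\widehat H(w)=h(|\mathcal B^{-1}(w)|)$, so $|u'|=|\mathcal B^{-1}(w)|\le\eta(E)$ and therefore $|w|\le\mathcal B(\eta(E))$. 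The trajectory thus stays in a fixed compact set $K$ of the $(u,w)$-plane. The continuous vector field is bounded on $K$, by $C$ say.

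\emph{Step 3: global continuation.} Using Zorn's lemma, or the standard extension theorem for continuous vector fields in \cite{Hartman2002}, take a maximal solution on $(\alpha,\beta)\ni x_0$; Step~1 makes it well defined. Suppose $\beta<\infty$. Because $|(u',w')|\le C$, the solution is Lipschitz, so the limit of $(u,w)(x)$ as $x\to\beta^-$ exists and lies in $K$. Peano's theorem at $x=\beta$ then gives a continuation, glued in $C^1$ because the vector field is continuous, contradicting maximality. The case $\alpha>-\infty$ is symmetric, so the solution is defined on all of $\R$. Uniqueness of the global solution follows from Step~1 on every bounded interval, and energy conservation is Proposition~\ref{prop:global_energy} applied on $\R$.

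The main obstacle is Step~1. Uniqueness is only local and relies on the non-Lipschitz structure being handled in two separate ways. The delicate point is to make sure two solutions cannot branch at some later point $x_1$; this is settled by the fact that a solution never reaches the origin unless it is identically zero, so every coincidence point is nonzero and covered by Proposition~\ref{prop:ivp_local_uniqueness_nonzero}.
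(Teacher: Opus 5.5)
Your proposal is correct and follows essentially the same route as the paper: Peano existence, local uniqueness (Proposition~\ref{prop:ivp_local_uniqueness_nonzero} away from the origin, Corollary~\ref{cor:equilibrium_uniqueness} at it), and conservation plus coercivity of $\mathcal E$ to confine the trajectory to a compact set and continue it globally. Your Step~1 (the connectedness argument, using that a trajectory with nonzero data never reaches the origin) and Step~3 spell out the ``patching'' and ``standard continuation'' steps that the paper's proof only invokes.
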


\begin{proof}
Local existence follows from Proposition~\ref{prop:ivp_local_existence}.
Local uniqueness at nonzero states follows from
Proposition~\ref{prop:ivp_local_uniqueness_nonzero}, while uniqueness at the
origin follows from Corollary~\ref{cor:equilibrium_uniqueness}.

It remains to prove global continuation. Along every local solution,
Proposition~\ref{prop:global_energy} gives
\[
    \mathcal E(u(x),w(x))=E_0,
    \qquad
    E_0:=\mathcal E(u_0,w_0).
\]
The energy is coercive in the phase variables. Indeed,
\[
    \widehat H(w)
    =
    h\!\left(|\mathcal B^{-1}(w)|\right)
    \longrightarrow+\infty
    \qquad\text{as }|w|\to\infty,
\]
and, since $\lambda,\mu>0$,
\[
    \frac{\lambda}{p}(u^+)^p
    +
    \frac{\mu}{q}(u^-)^q
    \longrightarrow+\infty
    \qquad\text{as }|u|\to\infty.
\]
Thus the trajectory remains in the compact energy level
\[
    \{(u,w)\in\R^2:\mathcal E(u,w)=E_0\}.
\]
The standard continuation theorem for ordinary differential equations then
extends the solution to all $x\in\R$; see \cite{Hartman2002}. Local
uniqueness patches these continuations into a unique global solution.
\end{proof}

\begin{corollary}[Uniqueness from position and slope]
\label{cor:ivp_slope}
Assume $\lambda,\mu>0$. Given
\[
    x_0\in\R,
    \qquad
    u(x_0)=u_0,
    \qquad
    u'(x_0)=v_0,
\]
there exists a unique global solution of the differential equation. It is the
solution of the problem in Theorem~\ref{thm:ivp_wellposed} with
\[
    w_0=\mathcal B(v_0).
\]
\end{corollary}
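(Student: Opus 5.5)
The statement is a direct translation of Theorem~\ref{thm:ivp_wellposed} from the phase variables $(u,w)$ to the data $(u,u')$. The plan is to use Lemma~\ref{lem:B_properties}, which says $\mathcal B$ is a homeomorphism of $\R$, to set up a bijection between slope data $v_0$ and flux data $w_0=\mathcal B(v_0)$. Existence and uniqueness then pass through this correspondence.

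\emph{Existence.} Put $w_0:=\mathcal B(v_0)$ and let $(u,w)\in C^1(\R;\R^2)$ be the unique global solution of \eqref{eq:ivp} provided by Theorem~\ref{thm:ivp_wellposed}. By Proposition~\ref{prop:system_equivalence}, $w=\mathcal B(u')$ and $u$ solves the differential equation in \eqref{eq:model} on $\R$. Since $w$ is $C^1$, the flux relation in fact holds classically. Evaluating the first equation of \eqref{eq:dynamical_system} at $x_0$ gives
\[
u'(x_0)=\mathcal B^{-1}(w_0)=\mathcal B^{-1}(\mathcal B(v_0))=v_0,
\]
so $u$ has the prescribed position and slope at $x_0$.

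\emph{Uniqueness.} Let $\tilde u$ be another solution of the differential equation on some interval containing $x_0$, with $\tilde u(x_0)=u_0$ and $\tilde u'(x_0)=v_0$. By Proposition~\ref{prop:regularity}, applied locally, or by the first part of Proposition~\ref{prop:system_equivalence}, the pair $(\tilde u,\tilde w)$ with $\tilde w:=\mathcal B(\tilde u')$ is a $C^1$ solution of \eqref{eq:dynamical_system}. It satisfies $\tilde w(x_0)=\mathcal B(v_0)=w_0$. The uniqueness part of Theorem~\ref{thm:ivp_wellposed} then gives $(\tilde u,\tilde w)=(u,w)$ on the common interval. So $\tilde u$ is the restriction of $u$, and the global solution is unique.

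The only delicate point is that the uniqueness assertion of Theorem~\ref{thm:ivp_wellposed} must apply to solutions defined on arbitrary subintervals, not only to global ones. This holds because its proof rests on local uniqueness at every phase point: Proposition~\ref{prop:ivp_local_uniqueness_nonzero} at nonzero states and Corollary~\ref{cor:equilibrium_uniqueness} at the origin. A standard connectedness argument then extends this: the set of points where two solutions agree is nonempty, closed by continuity, and open by local uniqueness, so it is the whole common interval. Beyond checking this, everything reduces to the bijectivity of $\mathcal B$.
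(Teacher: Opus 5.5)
Your proposal is correct and follows the paper's route. The paper states this corollary without a separate proof, as an immediate consequence of Theorem~\ref{thm:ivp_wellposed} with $w_0=\mathcal B(v_0)$, using the bijectivity of $\mathcal B$ (Lemma~\ref{lem:B_properties}) and Proposition~\ref{prop:system_equivalence}; your open--closed connectedness argument simply spells out the ``local uniqueness patches these continuations'' step already used in the theorem's proof.
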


\begin{remark}[Why Picard--Lindel\"of is not invoked directly]
\label{rem:ivp_lipschitz}
For arbitrary $1<q<p$, the vector field in \eqref{eq:dynamical_system} is
continuous but need not be locally Lipschitz on the coordinate axes. The
argument above uses the special one-dimensional structure instead: away from
the axes one phase variable can be used as the independent variable, and the
conserved Hamiltonian handles the equilibrium. This avoids imposing
additional restrictions on $p$ and $q$ merely to invoke the standard
Picard--Lindel\"of theorem for the planar system.
\end{remark}

\section{The Dirichlet Boundary Condition and Isolation of Zeros}
\label{sec:bvp}

The initial value problem determines a trajectory once its starting state is
specified. We now impose the two Dirichlet conditions and identify which of
those trajectories are solutions of the boundary value problem
\eqref{eq:model}.

Fix $\lambda,\mu>0$. For each initial slope $a\in\R$, let $u_a$ denote the
unique global solution given by Corollary~\ref{cor:ivp_slope} with
\begin{equation}\label{eq:shooting_initial_data}
    u_a(0)=0,
    \qquad
    u_a'(0)=a.
\end{equation}
Equivalently, the initial flux is $w_a(0)=\mathcal B(a)$. Define the shooting
map
\begin{equation}\label{eq:shooting_map}
    S_{\lambda,\mu}(a):=u_a(L).
\end{equation}
The first Dirichlet condition has been built into the initial data; the second
condition is exactly the scalar equation
\[
    S_{\lambda,\mu}(a)=0.
\]

\begin{proposition}[IVP--BVP correspondence]
\label{prop:shooting_correspondence}
Assume $\lambda,\mu>0$.
\begin{enumerate}[label=\textup{(\roman*)}]
\item The initial slope $a=0$ produces only the trivial solution.
\item If $a\ne0$, then $u_a|_{[0,L]}$ is a nontrivial weak solution of
\eqref{eq:model} if and only if
\[
    S_{\lambda,\mu}(a)=0.
\]
\item Every nontrivial weak solution of \eqref{eq:model} is obtained in this
way from the unique initial slope $a=u'(0)\ne0$.
\end{enumerate}
\end{proposition}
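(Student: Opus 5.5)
The plan is to prove the three items in order. Each one reduces to the well-posedness results of Section~\ref{sec:energy} combined with the equivalence in Proposition~\ref{prop:system_equivalence}.

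For (i), take $a=0$. The initial state is $(u_a(0),w_a(0))=(0,\mathcal B(0))=(0,0)$, which is the equilibrium. Corollary~\ref{cor:equilibrium_uniqueness} then forces $(u_a,w_a)\equiv(0,0)$, so $u_a$ is the trivial solution.

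For (ii), fix $a\ne0$. By Corollary~\ref{cor:ivp_slope}, $(u_a,w_a)$ is a global $C^1$ solution of \eqref{eq:dynamical_system}. Proposition~\ref{prop:system_equivalence}, applied on $I=[0,L]$, shows that $u_a$ satisfies the differential equation weakly there.
\begin{itemize}
\item[$(\Leftarrow)$] Suppose $S_{\lambda,\mu}(a)=0$. Then $u_a(0)=0=u_a(L)$, and $u_a\in C^1[0,L]\subset W_0^{1,p}(0,L)$. The one point requiring care is that the weak formulation \eqref{eq:weak} is stated for test functions $\varphi\in X$ rather than only compactly supported ones. This is handled by integrating by parts directly against $\varphi\in X$: since $w_a\in C^1[0,L]$ and $\varphi(0)=\varphi(L)=0$, the boundary terms vanish, and \eqref{eq:weak} follows.
\item Nontriviality holds because $u_a'(0)=a\ne0$.
\item[$(\Rightarrow)$] If $u_a|_{[0,L]}$ is a weak solution of \eqref{eq:model}, then it lies in $X$. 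Hence $u_a(L)=0$, which is exactly $S_{\lambda,\mu}(a)=0$.
\end{itemize}

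For (iii), let $u$ be a nontrivial weak solution. By Proposition~\ref{prop:regularity}, $u\in C^1[0,L]$ and $w=\mathcal B(u')\in C^1[0,L]$ satisfies \eqref{eq:flux}. Thus $(u,w)$ solves \eqref{eq:dynamical_system} on $[0,L]$ with $u(0)=0$. Set $a:=u'(0)$.
\begin{itemize}
\item If $a=0$, the trajectory passes through $(0,0)$ at $x=0$. Corollary~\ref{cor:equilibrium_uniqueness}, which is valid on any interval, including the closed interval $[0,L]$ with an endpoint initial time, gives $u\equiv0$. This contradicts nontriviality, so $a\ne0$.
\item Next, the uniqueness part of Theorem~\ref{thm:ivp_wellposed} identifies $(u,w)$ with the restriction of $(u_a,w_a)$ to $[0,L]$. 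Local uniqueness at $x_0=0$ holds at the nonzero state $(0,\mathcal B(a))$ by Proposition~\ref{prop:ivp_local_uniqueness_nonzero}. This propagates along $[0,L]$ by a standard connectedness argument: the set where the two solutions agree is closed, and it is open by local uniqueness at every state.
\item Uniqueness of the slope is immediate, since $a$ is determined as $u'(0)$.
\end{itemize}

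I expect the only delicate step to be this uniqueness-transfer argument on a closed interval with the initial time at an endpoint. I plan to handle it by extending $u$ to a solution on $\mathbb R$ via Theorem~\ref{thm:ivp_wellposed} from the data at $x=L$, or, more simply, by applying the one-sided version of the local uniqueness proofs, which work verbatim.
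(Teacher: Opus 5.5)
Your proposal is correct and follows the paper's proof essentially step for step: Corollary~\ref{cor:equilibrium_uniqueness} for (i), the built-in condition $u_a(0)=0$ together with $S_{\lambda,\mu}(a)=0$ for (ii), and regularity, Proposition~\ref{prop:system_equivalence}, and IVP uniqueness (with $a\ne0$ forced by the equilibrium corollary) for (iii). Your extra care with test functions in $X$ and with uniqueness at the endpoint $x=0$ only fills in details the paper leaves implicit; of your two options for the endpoint issue, the one-sided local argument is the clean one, because extending from the data at $x=L$ alone does not make $x=0$ an interior point (you would also have to prepend $(u_a,w_a)$ on $(-\infty,0]$).
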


\begin{proof}
If $a=0$, then the initial phase state is $(0,0)$, so
Corollary~\ref{cor:equilibrium_uniqueness} gives $u_0\equiv0$.

For $a\ne0$, the differential equation is already satisfied by construction,
and $u_a(0)=0$. Hence the restriction to $[0,L]$ satisfies both Dirichlet
conditions exactly when $u_a(L)=0$, that is, when
$S_{\lambda,\mu}(a)=0$. Since $a\ne0$, such a solution is nontrivial.

Conversely, let $u$ be a nontrivial weak solution of \eqref{eq:model}.
Proposition~\ref{prop:regularity} gives $u\in C^1[0,L]$, and
Proposition~\ref{prop:system_equivalence} identifies it with a trajectory of
\eqref{eq:dynamical_system}. Taking $a=u'(0)$, uniqueness of the initial
value problem implies $u=u_a$ on $[0,L]$. Moreover $a\ne0$, for otherwise
the initial state would be $(0,0)$ and the solution would be trivial. Finally,
the boundary condition $u(L)=0$ gives $S_{\lambda,\mu}(a)=0$.
\end{proof}

The preceding proposition isolates the role of the second boundary condition:
solving the Dirichlet problem is equivalent to finding nonzero initial slopes
whose trajectories return to the $u=0$ axis precisely at $x=L$. We next
justify the nodal decomposition of such trajectories.

\begin{corollary}[Simple and isolated zeros]
\label{cor:simple_zeros}
Assume $\lambda,\mu>0$ and let $u\not\equiv0$ be a weak solution of
\eqref{eq:model}. If
\[
    u(x_0)=0,
    \qquad x_0\in[0,L],
\]
then
\[
    u'(x_0)\ne0.
\]
Consequently, every zero of $u$ is isolated.
\end{corollary}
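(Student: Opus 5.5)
The argument goes by contradiction, and the key input is the uniqueness result for the initial value problem. Suppose $u\not\equiv0$ is a weak solution and $u(x_0)=0$, $u'(x_0)=0$ for some $x_0\in[0,L]$. By Proposition~\ref{prop:regularity}, $u\in C^1[0,L]$ and $w=\mathcal B(u')\in C^1[0,L]$. By Proposition~\ref{prop:system_equivalence}, the pair $(u,w)$ is a $C^1$ trajectory of \eqref{eq:dynamical_system} on $[0,L]$. Since $\mathcal B(0)=0$, at $x_0$ the phase state is $(u(x_0),w(x_0))=(0,0)$.

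Two routes close the argument; either suffices. The first uses Corollary~\ref{cor:equilibrium_uniqueness}, which says the only trajectory through the origin is the equilibrium, so $u\equiv0$ on $[0,L]$. This contradicts nontriviality. The second uses energy. Proposition~\ref{prop:global_energy} gives a constant energy $E$ along the trajectory. Evaluating at $x_0$, where all three terms of \eqref{eq:phase_energy} vanish, yields $E=0$. Each term is nonnegative, and $\widehat H(w)=h(|\mathcal B^{-1}(w)|)$ vanishes only at $w=0$, so $u\equiv0$ and $w\equiv0$. That again contradicts nontriviality, which requires $E>0$. The only point to check is that the energy argument applies on a closed interval $[0,L]$ containing an endpoint $x_0$. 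This is fine, because the energy identity was proved on any interval $I$ by differentiating a $C^1$ function, and one-sided derivatives at the endpoints cause no difficulty. Hence $u'(x_0)\ne0$.

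Isolation follows from simplicity. Since $u'$ is continuous and $u'(x_0)\ne0$, there is $\delta>0$ such that $u'$ keeps a fixed sign on $(x_0-\delta,x_0+\delta)\cap[0,L]$. Then $u$ is strictly monotone there, so $x_0$ is its only zero in that neighborhood.

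The main thing to get right is that Corollary~\ref{cor:equilibrium_uniqueness} applies to a weak Dirichlet solution that a priori lives only on $[0,L]$, not on a global trajectory. Proposition~\ref{prop:system_equivalence} bridges this gap. The energy argument sidesteps any appeal to extension to $\R$.
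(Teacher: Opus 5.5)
Your proposal is correct and matches the paper's proof. A double zero puts the phase state at $(0,0)$, and uniqueness at the equilibrium then forces $u\equiv0$; the paper cites Theorem~\ref{thm:ivp_wellposed} for this, which itself rests on Corollary~\ref{cor:equilibrium_uniqueness} and the energy identity of your second route, and isolation follows from continuity of $u'$ exactly as you argue.
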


\begin{proof}
Suppose that $u(x_0)=u'(x_0)=0$. Then the associated phase state is
\[
    (u(x_0),w(x_0))
    =
    (0,\mathcal B(0))
    =(0,0).
\]
By uniqueness for the initial value problem,
Theorem~\ref{thm:ivp_wellposed}, the trajectory through this state is the
zero trajectory. Hence $u\equiv0$, contradicting the hypothesis. Therefore
$u'(x_0)\ne0$. Since $u'$ is continuous, $u$ changes sign monotonically in a
sufficiently small one-sided or two-sided neighborhood of $x_0$, and the zero
is isolated.
\end{proof}

\begin{corollary}[Common slope magnitude at nodal points]
\label{cor:common_nodal_slope}
Under the assumptions of Corollary~\ref{cor:simple_zeros}, let $E>0$ be the
Hamiltonian energy of $u$. If $u(x_0)=0$, then
\[
    h(|u'(x_0)|)=E,
    \qquad
    |u'(x_0)|=\eta(E).
\]
In particular, every nodal crossing of the same solution has the same slope
magnitude.
\end{corollary}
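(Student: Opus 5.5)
The plan is to evaluate the conserved energy of Proposition~\ref{prop:global_energy} at the zero $x_0$. By Proposition~\ref{prop:regularity} and Proposition~\ref{prop:system_equivalence}, the pair $(u,w)$ with $w=\mathcal B(u')$ is a $C^1$ trajectory of \eqref{eq:dynamical_system} on $[0,L]$. Hence \eqref{eq:global_energy} holds on $[0,L]$ with a single constant $E$. Since $u\not\equiv0$, the trajectory is nontrivial, so $E>0$ by the same proposition.

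Next I will specialize the identity to $x=x_0$. Since $u(x_0)=0$, both potential terms $\frac{\lambda}{p}(u^+(x_0))^p$ and $\frac{\mu}{q}(u^-(x_0))^q$ vanish. This leaves $\widehat H(w(x_0))=E$. By Lemma~\ref{lem:legendre} applied with $s=u'(x_0)$, we have $\widehat H(w(x_0))=\widehat H(\mathcal B(u'(x_0)))=h(|u'(x_0)|)$. Therefore $h(|u'(x_0)|)=E$.

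Since $h$ is a strictly increasing bijection of $[0,\infty)$ with inverse $\eta$, applying $\eta$ gives $|u'(x_0)|=\eta(E)$. This is consistent with Corollary~\ref{cor:simple_zeros}, since $\eta(E)>0$ when $E>0$. Because $E$ is one constant for the whole solution, every zero $x_0\in[0,L]$, including the endpoints $0$ and $L$, has the same slope magnitude $\eta(E)$, which is the final assertion.

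The only point needing care is that the energy is a single constant across all nodal intervals, not a separate constant on each hump. This holds because \eqref{eq:global_energy} is established for the global phase-space energy $\mathcal E$ on the whole interval $[0,L]$, which is connected, and its derivative vanishes identically there. I therefore expect no real obstacle beyond invoking these earlier results in order.
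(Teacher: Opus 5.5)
Your proposal is correct and follows the paper's own argument. You evaluate the conserved energy \eqref{eq:global_energy} at the zero, where both potential terms vanish, apply Lemma~\ref{lem:legendre} to get $h(|u'(x_0)|)=E$, and invert with $\eta$; your added justification that $E$ is a single positive constant on all of $[0,L]$ is a harmless elaboration of what the paper takes for granted.
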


\begin{proof}
At a zero the two potential terms in \eqref{eq:global_energy} vanish, so
\[
    E=\widehat H(w(x_0)).
\]
Since $w(x_0)=\mathcal B(u'(x_0))$, Lemma~\ref{lem:legendre} yields
\[
    E=h(|u'(x_0)|).
\]
Applying $\eta=h^{-1}$ gives the second identity.
\end{proof}

\begin{corollary}[Finitely many zeros]
\label{cor:finitely_many_zeros}
Under the assumptions of Corollary~\ref{cor:simple_zeros}, the zero set of
$u$ in $[0,L]$ is finite.
\end{corollary}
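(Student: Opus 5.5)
The plan is a compactness argument on $[0,L]$ driven by Corollary~\ref{cor:simple_zeros}. Let
\[
    Z:=\{x\in[0,L]:u(x)=0\}.
\]
Since $u\in C^1[0,L]$ by Proposition~\ref{prop:regularity}, the set $Z$ is closed in $[0,L]$. It is therefore compact, and it contains $0$ and $L$. Corollary~\ref{cor:simple_zeros} says that every point of $Z$ is isolated in $Z$. Thus $Z$ is a compact set all of whose points are isolated, and I will show that such a set is finite.

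I intend to argue by contradiction, since that keeps the use of the earlier results transparent. Suppose $Z$ is infinite. By Bolzano--Weierstrass there is a sequence of distinct points $x_k\in Z$ converging to some $x^*\in[0,L]$. Continuity of $u$ gives $u(x^*)=0$, so $x^*\in Z$. Between consecutive terms $x_k\ne x_{k+1}$, Rolle's theorem supplies a point $\xi_k$ strictly between them with $u'(\xi_k)=0$. Since $\xi_k\to x^*$ and $u'$ is continuous, we get $u'(x^*)=0$. Then $u(x^*)=u'(x^*)=0$ with $u\not\equiv0$, which contradicts Corollary~\ref{cor:simple_zeros} directly. This version avoids even needing the isolation statement, only the simplicity of zeros.

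There is essentially no obstacle. The only point needing care is that the accumulation point $x^*$ may be an endpoint $0$ or $L$. However, Corollary~\ref{cor:simple_zeros} is stated for all $x_0\in[0,L]$, and $u'$ is continuous up to the boundary, so the argument is unaffected. Alternatively, one can note that the open cover of $Z$ by neighborhoods containing a single zero each has a finite subcover. Either route yields that $Z$ is finite, so the nodal intervals are finitely many consecutive intervals between ordered zeros $0=z_0<z_1<\dots<z_N=L$.
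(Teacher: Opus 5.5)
Your argument is correct and is essentially the paper's: an accumulation point $x^*$ of zeros is itself a zero with $u'(x^*)=0$, which contradicts Corollary~\ref{cor:simple_zeros}. The only difference is how $u'(x^*)=0$ is obtained. The paper reads it off from the vanishing difference quotients $\bigl(u(x_k)-u(x^*)\bigr)/(x_k-x^*)=0$, which uses only differentiability at $x^*$, whereas you use Rolle's theorem together with continuity of $u'$; both are valid here since $u\in C^1[0,L]$.
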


\begin{proof}
If the zero set were infinite, compactness of $[0,L]$ would give an
accumulation point $x_*$. By continuity, $u(x_*)=0$. Choosing distinct zeros
$x_k\to x_*$ gives
\[
    \frac{u(x_k)-u(x_*)}{x_k-x_*}=0.
\]
Since $u$ is differentiable at $x_*$, passing to the limit yields
$u'(x_*)=0$, contradicting Corollary~\ref{cor:simple_zeros}.
\end{proof}

\section{Time Maps and Single Humps}
\label{sec:time_maps}

The previous sections separate the two issues that precede the time-map construction. The initial value problem determines a unique global trajectory, while Proposition~\ref{prop:shooting_correspondence} identifies the additional
return condition required for that trajectory to solve the Dirichlet boundary
value problem. Corollary~\ref{cor:simple_zeros} shows that every zero of a
nontrivial Dirichlet solution is a genuine transverse crossing. We may
therefore analyze one nodal excursion at a time and compute the travel time
from a zero to its turning point.

For $m>1$, $\alpha>0$, and $E>0$, define
\[
    a_m(E;\alpha):=\left(\frac{mE}{\alpha}\right)^{1/m},
\]
and
\begin{equation}\label{eq:generic_tau}
    \tau_m(E;\alpha)
    :=
    \int_0^{a_m(E;\alpha)}
    \frac{ds}{\eta\left(E-\frac{\alpha}{m}s^m\right)}.
\end{equation}

\begin{lemma}[Finiteness of $\tau_m$]
\label{lem:tau_finite}
For every $m>1$, $\alpha>0$, and $E>0$, the integral
\eqref{eq:generic_tau} is finite.
\end{lemma}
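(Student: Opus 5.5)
The integrand is continuous and positive on $[0,a_m)$, so the only possible trouble is the endpoint $s=a_m:=a_m(E;\alpha)$, where $E-\frac{\alpha}{m}s^m\to0$ and hence $\eta(E-\frac{\alpha}{m}s^m)\to0$. The plan is to bound $1/\eta$ near $0$ from above by a power of its argument, and then to check that the argument vanishes only to first order at $a_m$.

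\textbf{Step 1: lower bound for $\eta$ near $0$.} For $0\le\xi\le1$ we have $\xi^p\le\xi^q$ because $q<p$. Hence
\[
h(\xi)\le\left(\frac{p-1}{p}+\frac{q-1}{q}\right)\xi^q=:C\xi^q .
\]
Since $h$ is increasing, put $\xi=\eta(t)$ for small $t\ge0$, so that $\eta(t)\le1$. This gives $t\le C\eta(t)^q$, that is,
\[
\eta(t)\ge (t/C)^{1/q}\quad\text{for } 0\le t\le h(1).
\]

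\textbf{Step 2: first-order vanishing of the argument.} Set $\psi(s):=E-\frac{\alpha}{m}s^m$. Then $\psi(a_m)=0$ and $\psi'(s)=-\alpha s^{m-1}$. By the mean value theorem, for $s\in[a_m/2,a_m]$,
\[
\psi(s)\ge \alpha (a_m/2)^{m-1}(a_m-s)=:c\,(a_m-s),\qquad c>0 .
\]
The case $m<2$ needs no extra care, since $s^{m-1}$ is bounded below on $[a_m/2,a_m]$ in every case.

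\textbf{Step 3: conclude.} Choose $s_0<a_m$, with $s_0\ge a_m/2$, such that $\psi(s)\le h(1)$ on $[s_0,a_m]$. On that interval the two steps combine to give
\[
\frac{1}{\eta(\psi(s))}\le C^{1/q}c^{-1/q}(a_m-s)^{-1/q}.
\]
This bound is integrable because $1/q<1$. On $[0,s_0]$ the integrand is continuous and bounded, since there $\psi\ge\psi(s_0)>0$ and $\eta$ is continuous and positive on $(0,\infty)$. The integral is therefore finite.

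The only real point is Step 1: the bound must use the \emph{smaller} exponent $q$, which governs $h$ near $0$, because that is what makes the singular exponent $1/q<1$.
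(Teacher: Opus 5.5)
Your proof is correct and follows the same route as the paper's. The only singularity is at the upper endpoint, the argument $E-\frac{\alpha}{m}s^m$ vanishes linearly there, and $\eta(y)$ behaves like $y^{1/q}$ near $0$, so the integrand is dominated by a multiple of $(a_m-s)^{-1/q}$. The one difference is that you derive $\eta(t)\ge (t/C)^{1/q}$ directly from $h(\xi)\le C\xi^q$ on $[0,1]$, whereas the paper uses the asymptotics of Lemma~\ref{lem:eta_asymptotics}, which it only proves later; your version is therefore self-contained.
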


\begin{proof}
Only the upper endpoint can be singular. Near $a=a_m(E;\alpha)$,
\[
    E-\frac{\alpha}{m}s^m
    =\frac{\alpha}{m}(a^m-s^m)
    \sim C(a-s).
\]
By the small-energy asymptotics proved below, $\eta(y)\sim C_0y^{1/q}$ as $y\downarrow0$. Hence the integrand is bounded by $C(a-s)^{-1/q}$ near $a$, which is integrable because $q>1$.
\end{proof}

\begin{proposition}[Half-hump construction]
\label{prop:half_hump}
Let $m>1$, $\alpha>0$, and $E>0$. Put
\[
    a=a_m(E;\alpha),
    \qquad
    \tau=\tau_m(E;\alpha).
\]
There exists a unique increasing function $U:[0,\tau]\to[0,a]$ satisfying
\[
    U(0)=0,
    \quad U(\tau)=a,
    \quad U'>0\text{ on }(0,\tau),
\]
and
\[
    h(U')+\frac{\alpha}{m}U^m=E.
\]
It is defined implicitly by
\[
    x=\int_0^{U(x)}
    \frac{ds}{\eta\left(E-\frac{\alpha}{m}s^m\right)}.
\]
Moreover,
\[
    -(\mathcal B(U'))'=\alpha U^{m-1}
    \qquad\text{on }(0,\tau).
\]
\end{proposition}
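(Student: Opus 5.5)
We define $U$ as the inverse of the map
\[
    X(y):=\int_0^{y}\frac{ds}{\eta\left(E-\frac{\alpha}{m}s^m\right)},\qquad y\in[0,a].
\]
The integrand $\rho(s):=1/\eta(E-\frac{\alpha}{m}s^m)$ is continuous and positive on $[0,a)$, because $\eta$ is continuous and strictly increasing with $\eta(y)>0$ for $y>0$. By Lemma~\ref{lem:tau_finite} it is integrable on $[0,a]$. Hence $X$ is continuous and strictly increasing on $[0,a]$, with $X(0)=0$ and $X(a)=\tau$. It is $C^1$ on $[0,a)$ with $X'(y)=\rho(y)>0$. Therefore $U:=X^{-1}:[0,\tau]\to[0,a]$ is a continuous increasing bijection with $U(0)=0$ and $U(\tau)=a$. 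By the inverse function theorem, $U$ is $C^1$ on $[0,\tau)$ with
\[
    U'(x)=\eta\!\left(E-\tfrac{\alpha}{m}U(x)^m\right)>0 .
\]
Applying $h$ to both sides gives $h(U')+\frac{\alpha}{m}U^m=E$. This proves existence together with the implicit formula.

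\textbf{Uniqueness.} Suppose $V$ is another increasing function with the stated properties. Since $V'>0$ on $(0,\tau)$, the energy relation can be inverted as $V'=\eta(E-\frac{\alpha}{m}V^m)$. Separating variables on $(0,\tau)$ gives $\frac{d}{dx}X(V(x))=1$, so $X(V(x))=x$ by continuity at $0$. Hence $V=U$.

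\textbf{The flux equation.} This is the step I expect to need the most care, because we must differentiate $\mathcal B(U')$ even though $\eta$ may fail to be differentiable at $0$. On $(0,\tau)$, however, $U<a$, so the argument $y=E-\frac{\alpha}{m}U^m$ is strictly positive, and $h$ is smooth with $h'(\xi)=(p-1)\xi^{p-1}+(q-1)\xi^{q-1}>0$ for $\xi>0$. Thus $\eta$ is $C^1$ near such $y$, and $U'$ is $C^1$ on $(0,\tau)$. Differentiating the energy identity gives
\[
    h'(U')U''+\alpha U^{m-1}U'=0 .
\]
For $\xi>0$ we have $h'(\xi)=\xi\,\mathcal B'(\xi)$, which is the derivative form of $H(s)=s\mathcal B(s)-\Phi(s)$. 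Substituting and dividing by $U'>0$ yields $\mathcal B'(U')U''=-\alpha U^{m-1}$, that is, $-(\mathcal B(U'))'=\alpha U^{m-1}$ on $(0,\tau)$.

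As a remark, $U'(x)\to\eta(0)=0$ as $x\to\tau$, so $U$ is in fact $C^1$ up to $\tau$ with a horizontal tangent there. This is what later allows the half-hump to be reflected about $\tau$ to form a full hump.
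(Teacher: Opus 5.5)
Your proposal is correct and follows essentially the same route as the paper: invert the quadrature map (the paper's $F$, your $X$), read off $U'=\eta\bigl(E-\frac{\alpha}{m}U^m\bigr)$, and differentiate the energy identity using $h'(\xi)=\xi\,\mathcal B'(\xi)$ for $\xi>0$, with uniqueness coming from the invertibility of that map. You also supply details the paper leaves implicit: that $\eta\in C^1(0,\infty)$, so $U\in C^2(0,\tau)$ and the second differentiation is justified; and the explicit identity $X(V(x))=x$ behind uniqueness (continuity of $V$ at $0$ is not needed there, since $0\le X(V(x))\le\tau$ forces the integration constant to vanish).
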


\begin{proof}
Define
\[
    F(y):=\int_0^y
    \frac{ds}{\eta\left(E-\frac{\alpha}{m}s^m\right)},
    \qquad 0\le y\le a.
\]
Then $F$ is continuous and strictly increasing, with $F(0)=0$ and
$F(a)=\tau<\infty$. Set $U=F^{-1}$. Differentiation gives
\[
    U'=\eta\left(E-\frac{\alpha}{m}U^m\right),
\]
and therefore
\[
    h(U')+\frac{\alpha}{m}U^m=E.
\]
Differentiating on $(0,\tau)$ and using
$h'(\xi)=\xi\mathcal B'(\xi)$ for $\xi>0$, we obtain
\[
    (\mathcal B(U'))'+\alpha U^{m-1}=0.
\]
Uniqueness follows from the strict invertibility of $F$.
\end{proof}

Define
\begin{equation}\label{eq:tau_plus}
    \tau_+(E;\lambda)
    :=\tau_p(E;\lambda)
    =\int_0^{(pE/\lambda)^{1/p}}
    \frac{ds}{\eta\left(E-\frac{\lambda}{p}s^p\right)},
\end{equation}
and
\begin{equation}\label{eq:tau_minus}
    \tau_-(E;\mu)
    :=\tau_q(E;\mu)
    =\int_0^{(qE/\mu)^{1/q}}
    \frac{ds}{\eta\left(E-\frac{\mu}{q}s^q\right)}.
\end{equation}

\begin{proposition}[Unique full humps]
\label{prop:full_humps}
Let $E>0$.
\begin{enumerate}[label=\textup{(\roman*)}]
\item For every $\lambda>0$, there exists a unique positive full hump
$P_{E,\lambda}$ on $[0,2\tau_+(E;\lambda)]$ satisfying
\[
    -(\mathcal B(P_{E,\lambda}'))'=\lambda P_{E,\lambda}^{p-1},
\]
\[
    P_{E,\lambda}(0)=P_{E,\lambda}(2\tau_+(E;\lambda))=0,
    \qquad P_{E,\lambda}>0\text{ in the interior},
\]
and
\[
    P_{E,\lambda}'(0)=\eta(E),
    \qquad
    P_{E,\lambda}'(2\tau_+(E;\lambda))=-\eta(E).
\]
\item For every $\mu>0$, there exists a unique negative full hump
$N_{E,\mu}$ on $[0,2\tau_-(E;\mu)]$ satisfying
\[
    -(\mathcal B(N_{E,\mu}'))'=-\mu(N_{E,\mu}^-)^{q-1},
\]
\[
    N_{E,\mu}(0)=N_{E,\mu}(2\tau_-(E;\mu))=0,
    \qquad N_{E,\mu}<0\text{ in the interior},
\]
and
\[
    N_{E,\mu}'(0)=-\eta(E),
    \qquad
    N_{E,\mu}'(2\tau_-(E;\mu))=\eta(E).
\]
\end{enumerate}
\end{proposition}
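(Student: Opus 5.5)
We build each full hump by reflecting the half hump of Proposition~\ref{prop:half_hump} about its turning point, and get uniqueness from Corollary~\ref{cor:ivp_slope}.

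\textbf{Existence in (i).} Take $m=p$ and $\alpha=\lambda$ in Proposition~\ref{prop:half_hump}. This gives $U$ on $[0,\tau]$, with $\tau=\tau_+(E;\lambda)$. Define
\[
P(x)=U(x) \text{ on } [0,\tau],\qquad P(x)=U(2\tau-x) \text{ on } [\tau,2\tau].
\]
The checks are as follows.
\begin{itemize}
\item Since $U$ is increasing with values in $[0,a]$, we have $P(0)=P(2\tau)=0$ and $P>0$ in the interior.
\item From $U'=\eta(E-\frac{\lambda}{p}U^p)$ we get $U'(0)=\eta(E)$. This gives $P'(0)=\eta(E)$ and $P'(2\tau)=-\eta(E)$.
\item Since $U(\tau)=a$, we have $U'(\tau)=\eta(0)=0$. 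The two pieces therefore join in a $C^1$ way at $x=\tau$.
\item On $[\tau,2\tau]$ the reflected piece still solves the equation, because $\mathcal B$ is odd. Indeed, $(\mathcal B(P'))'(x)=(\mathcal B(U'))'(2\tau-x)$.
\end{itemize}

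\textbf{The main obstacle.} The point needing care is that $w=\mathcal B(P')$ is $C^1$ across $x=\tau$ and across the endpoints. On $(0,\tau)$ we have $w'=-\lambda U^{p-1}$, which extends continuously to $[0,\tau]$. So $w$ is $C^1$ up to $\tau$ with one-sided derivative $-\lambda a^{p-1}$. By the reflection, the derivative from the right at $\tau$ has the same value $-\lambda a^{p-1}$, and $w(\tau)=0$ from both sides. Hence $w\in C^1[0,2\tau]$ and the equation holds classically on the closed interval. By Proposition~\ref{prop:system_equivalence}, $P$ is then a solution in the paper's sense.

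\textbf{Uniqueness in (i).} Let $\tilde P$ be any function with the stated properties. It solves the differential equation of \eqref{eq:model} on $[0,2\tau_+]$. Because $\tilde P\ge 0$, this equation coincides with the full equation there, whatever the value of $\mu>0$. Its data at $0$ are $\tilde P(0)=0$ and $\tilde P'(0)=\eta(E)$. Corollary~\ref{cor:ivp_slope} then forces $\tilde P=P$. If needed, one could alternatively argue through the energy identity: $h(\tilde P')+\frac{\lambda}{p}\tilde P^p=E$, with the energy fixed by the endpoint slope. This yields $\tilde P'=\eta(E-\frac{\lambda}{p}\tilde P^p)$ while $\tilde P$ increases, and the uniqueness part of Proposition~\ref{prop:half_hump} applies.

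\textbf{Part (ii).} Apply part (i) with $p$ replaced by $q$, that is, take $m=q$ and $\alpha=\mu$ in Proposition~\ref{prop:half_hump}. This produces a positive hump $V$ on $[0,2\tau_-(E;\mu)]$ with $-(\mathcal B(V'))'=\mu V^{q-1}$. Set $N=-V$. Oddness of $\mathcal B$ gives $-(\mathcal B(N'))'=-\mu(N^-)^{q-1}$, and the slopes become $N'(0)=-\eta(E)$ and $N'(2\tau_-)=\eta(E)$. Uniqueness follows from the same initial value argument, now with initial slope $-\eta(E)$.
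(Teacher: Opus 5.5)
Your proposal is correct and follows essentially the same route as the paper: existence comes from reflecting the half-hump of Proposition~\ref{prop:half_hump} about its turning point, and uniqueness comes from the initial value theory of Theorem~\ref{thm:ivp_wellposed}. The differences are minor. You apply uniqueness directly at the initial state $(0,\mathcal B(\eta(E)))$ at $x=0$, correctly noting that any $\mu>0$ may be used since $\tilde P\ge0$, read the endpoint slopes off $U'=\eta(E-\frac{\lambda}{p}U^p)$, and check the $C^1$ gluing of the flux at $x=\tau$ explicitly; the paper instead applies uniqueness at the turning point $(a_+,0)$ after identifying the increasing half with the half-hump, which is the alternative you mention.
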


\begin{proof}
For the positive hump, Proposition~\ref{prop:half_hump} gives the unique
increasing half-hump from the zero level to the turning point
\[
    a_+=\left(\frac{pE}{\lambda}\right)^{1/p}.
\]
At the turning point the phase state is $(u,w)=(a_+,0)$. Reflecting the
increasing half-hump across this point produces a $C^1$ solution of the
system. By Theorem~\ref{thm:ivp_wellposed}, there is only one trajectory
through $(a_+,0)$, so this reflected curve is the unique continuation past
the maximum.

At either zero endpoint, Corollary~\ref{cor:common_nodal_slope} gives
$|u'|=\eta(E)$, and the orientation of the hump determines the displayed
signs of the endpoint derivatives. Conversely, any positive full hump of
energy $E$ has strictly decreasing flux on its interior, hence a unique
turning point; its increasing half is the half-hump from
Proposition~\ref{prop:half_hump}, and IVP uniqueness fixes the continuation.

For the negative hump, apply the same argument to $v=-u>0$ with parameter
$\mu$ and exponent $q$. Oddness of $\mathcal B$ gives the stated negative
profile and endpoint derivatives.
\end{proof}

\section{Nodal Reduction}
\label{sec:nodal}

\begin{definition}[Nodal interval]
A nodal interval of $u$ is a connected component of $\{x\in(0,L):u(x)\ne0\}$. It is positive or negative according to the sign of $u$ on that interval.
\end{definition}

\begin{theorem}[Nodal reduction theorem]
\label{thm:nodal_reduction}
Assume $\lambda,\mu>0$. Let $u\not\equiv0$ be a weak solution of
\eqref{eq:model}. Then $u$ has finitely many nodal intervals, all sharing a common energy $E>0$. Every positive nodal interval has length
$2\tau_+(E;\lambda)$, and every negative nodal interval has length
$2\tau_-(E;\mu)$.
\end{theorem}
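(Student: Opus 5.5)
The proof assembles the earlier results in three steps: finiteness of the nodal intervals, a common energy for all of them, and identification of each nodal interval with a full hump whose length is already known.

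\emph{Finiteness.} By Corollary~\ref{cor:finitely_many_zeros}, the zero set of $u$ in $[0,L]$ is finite, say $0=x_0<x_1<\dots<x_k=L$. Here we use the Dirichlet conditions, which put $0$ and $L$ in the zero set. The connected components of $\{u\ne0\}\cap(0,L)$ are then exactly the intervals $(x_{i-1},x_i)$, so there are finitely many nodal intervals, and $u$ has constant sign on each one.

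\emph{Common energy.} By Proposition~\ref{prop:regularity} and Proposition~\ref{prop:system_equivalence}, $(u,w)$ is a single $C^1$ trajectory of \eqref{eq:dynamical_system} on $[0,L]$. Proposition~\ref{prop:global_energy} then gives one constant $E$ in \eqref{eq:global_energy} on all of $[0,L]$, and $E>0$ because $u\not\equiv0$. Restricted to a positive interval, this identity, combined with Lemma~\ref{lem:legendre}, reads $h(|u'|)+\frac{\lambda}{p}u^p=E$. On a negative interval, with $v=-u$, it reads $h(|v'|)+\frac{\mu}{q}v^q=E$. So the energy is automatically shared by all humps, and Corollary~\ref{cor:common_nodal_slope} gives $|u'(x_i)|=\eta(E)$ at every zero.

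\emph{Identification with full humps.} Fix a positive nodal interval $(x_{i-1},x_i)$. Corollary~\ref{cor:simple_zeros} gives $u'(x_{i-1})=\eta(E)>0$ and $u'(x_i)=-\eta(E)$. On this interval $w'=-\lambda u^{p-1}<0$, so $w$, and hence $u'=\mathcal B^{-1}(w)$, is strictly decreasing. Therefore $u'$ has exactly one zero $c$, and $u$ is strictly increasing on $[x_{i-1},c]$ and strictly decreasing on $[c,x_i]$. At $c$ the energy identity forces $u(c)=(pE/\lambda)^{1/p}$. On $[x_{i-1},c]$ we have $u'>0$, so solving the energy identity gives $u'=\eta(E-\frac{\lambda}{p}u^p)$. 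Separating variables yields
\[
    c-x_{i-1}=\int_0^{(pE/\lambda)^{1/p}}\frac{ds}{\eta\left(E-\frac{\lambda}{p}s^p\right)}=\tau_+(E;\lambda),
\]
which is the statement that the increasing half is the half-hump of Proposition~\ref{prop:half_hump}. On $[c,x_i]$, $u'=-\eta(\cdot)$ and the same computation gives $x_i-c=\tau_+(E;\lambda)$. Alternatively, one can invoke the uniqueness part of Proposition~\ref{prop:full_humps} together with translation invariance of the autonomous system to conclude $u(x_{i-1}+\cdot)=P_{E,\lambda}$. Either way, the interval has length $2\tau_+(E;\lambda)$. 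For negative intervals, apply the same argument to $v=-u$ with $\mu$, $q$ and $N_{E,\mu}$, which gives length $2\tau_-(E;\mu)$.

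The main obstacle is justifying the change of variables $dx=ds/u'$ up to the turning point, where the integrand becomes singular. I would handle this by computing $x-x_{i-1}=F(u(x))$ for $x<c$ and passing to the limit $x\uparrow c$. This is legitimate because $F$ is continuous on the closed interval, which is exactly what Lemma~\ref{lem:tau_finite} guarantees. Boundary intervals touching $0$ or $L$ need no separate treatment, since those endpoints are simple zeros as well.
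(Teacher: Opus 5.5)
Your proposal is correct and follows essentially the same route as the paper: finiteness from Corollary~\ref{cor:finitely_many_zeros}, one energy from the global Hamiltonian identity, endpoint slopes $\pm\eta(E)$, a unique turning point from the strict monotonicity of the flux, and separation of variables on each half. Your limiting argument $x\uparrow c$ in $x-x_{i-1}=F(u(x))$ justifies the change of variables at the singular turning point more carefully than the paper, which simply says ``separating variables on each half.''
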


\begin{proof}
Finiteness follows from Corollary~\ref{cor:finitely_many_zeros}. The global identity \eqref{eq:global_energy} supplies one constant $E>0$ on all of $[0,L]$.

Let $(a,b)$ be a positive nodal interval. By
Corollary~\ref{cor:common_nodal_slope},
\[
    h(|u'(a)|)=h(|u'(b)|)=E,
    \qquad
    |u'(a)|=|u'(b)|=\eta(E).
\]
The signs imply $u'(a)=\eta(E)$ and $u'(b)=-\eta(E)$. If
$w=\mathcal B(u')$, then on $(a,b)$,
\[
    w'=-\lambda u^{p-1}<0.
\]
Thus $w$ crosses zero exactly once, at a unique point $c\in(a,b)$. Hence $u$
is strictly increasing on $(a,c)$ and strictly decreasing on $(c,b)$. The energy identity becomes
\[
    h(|u'|)+\frac{\lambda}{p}u^p=E,
\]
and at the maximum
\[
    u(c)=\left(\frac{pE}{\lambda}\right)^{1/p}.
\]
The initial state at any point of this nodal interval determines the local
trajectory uniquely by Theorem~\ref{thm:ivp_wellposed}. Thus the restriction
of $u$ to $(a,b)$ is the uniquely determined positive hump of energy $E$
with these endpoint data. Separating variables on each half gives
\[
    c-a=b-c=\tau_+(E;\lambda),
\]
so $b-a=2\tau_+(E;\lambda)$.

For a negative nodal interval, put $v=-u>0$ and repeat the argument with
$\mu$ and exponent $q$ to obtain length $2\tau_-(E;\mu)$.
\end{proof}

\section{Parameter-Free Time Maps}
\label{sec:asymptotics}

Rescaling \eqref{eq:tau_plus}--\eqref{eq:tau_minus} yields
\[
    \tau_+(E;\lambda)=\lambda^{-1/p}A(E),
    \qquad
    \tau_-(E;\mu)=\mu^{-1/q}B(E),
\]
where
\begin{equation}\label{eq:A_def}
    A(E):=p^{1/p}E^{1/p}
    \int_0^1\frac{dt}{\eta(E(1-t^p))},
\end{equation}
and
\begin{equation}\label{eq:B_def}
    B(E):=q^{1/q}E^{1/q}
    \int_0^1\frac{dt}{\eta(E(1-t^q))}.
\end{equation}
For $r>1$, define
\begin{equation}\label{eq:pi_r}
    \pi_r:=2(r-1)^{1/r}\int_0^1(1-t^r)^{-1/r}\,dt
    =\frac{2\pi(r-1)^{1/r}}{r\sin(\pi/r)}.
\end{equation}

\begin{lemma}[Asymptotics of $\eta$]
\label{lem:eta_asymptotics}
As $y\downarrow0$,
\[
    \eta(y)\sim\left(\frac{q}{q-1}y\right)^{1/q},
\]
and as $y\to\infty$,
\[
    \eta(y)\sim\left(\frac{p}{p-1}y\right)^{1/p}.
\]
\end{lemma}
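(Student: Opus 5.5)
The plan is to read off the asymptotics of $\eta=h^{-1}$ from the dominant term of $h$ at each end and then invert. Since $1<q<p$, the term $\frac{q-1}{q}\xi^q$ dominates $h(\xi)$ as $\xi\downarrow0$, while $\frac{p-1}{p}\xi^p$ dominates as $\xi\to\infty$. Concretely,
\[
    h(\xi)=\frac{q-1}{q}\xi^q\bigl(1+c\,\xi^{p-q}\bigr),
    \qquad
    h(\xi)=\frac{p-1}{p}\xi^p\bigl(1+c'\,\xi^{q-p}\bigr),
\]
where $c=\frac{q(p-1)}{p(q-1)}$ and $c'=1/c$.

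First, note that $h$ is a continuous strictly increasing bijection of $[0,\infty)$. Hence $\eta(y)\downarrow0$ as $y\downarrow0$, and $\eta(y)\to\infty$ as $y\to\infty$. Substitute $\xi=\eta(y)$ into the first factorization. This gives
\[
    y=\frac{q-1}{q}\eta(y)^q\bigl(1+c\,\eta(y)^{p-q}\bigr).
\]
Solving for $\eta$,
\[
    \eta(y)=\left(\frac{q}{q-1}y\right)^{1/q}\bigl(1+c\,\eta(y)^{p-q}\bigr)^{-1/q}.
\]
Because $\eta(y)\to0$ and $p-q>0$, the last factor tends to $1$, which is the first claim.

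The large-$y$ case is identical using the second factorization:
\[
    \eta(y)=\left(\frac{p}{p-1}y\right)^{1/p}\bigl(1+c'\,\eta(y)^{q-p}\bigr)^{-1/p}.
\]
Here $\eta(y)\to\infty$ and $q-p<0$, so the correction factor again tends to $1$.

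There is no real obstacle. The only point needing care is to first establish that $\eta(y)\to0$, respectively $\eta(y)\to\infty$, which follows from monotonicity and surjectivity of $h$. This is needed before the correction factors can be sent to $1$, and it avoids any circular use of the asymptotic itself.
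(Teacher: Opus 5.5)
Your proposal is correct and takes essentially the same route as the paper, which simply cites $h(\xi)\sim\frac{q-1}{q}\xi^q$ as $\xi\downarrow0$ and $h(\xi)\sim\frac{p-1}{p}\xi^p$ as $\xi\to\infty$ and inverts. Your explicit factorization, with correction factors $\bigl(1+c\,\eta(y)^{p-q}\bigr)^{-1/q}$ and $\bigl(1+c'\,\eta(y)^{q-p}\bigr)^{-1/p}$ together with the preliminary limits $\eta(y)\to0$ and $\eta(y)\to\infty$, is just that inversion step written out in full.
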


\begin{proof}
These follow immediately from
\[
    h(\xi)\sim\frac{q-1}{q}\xi^q\quad(\xi\downarrow0),
    \qquad
    h(\xi)\sim\frac{p-1}{p}\xi^p\quad(\xi\to\infty),
\]
by inversion.
\end{proof}

For $m>1$, define
\[
    R_m(y):=\frac{y^{1/m}}{\eta(y)},
    \qquad y>0.
\]
Then
\[
    A(E)=p^{1/p}\int_0^1
    (1-t^p)^{-1/p}R_p(E(1-t^p))\,dt,
\]
and
\[
    B(E)=q^{1/q}\int_0^1
    (1-t^q)^{-1/q}R_q(E(1-t^q))\,dt.
\]

\begin{lemma}[Limits and domination]
\label{lem:R_limits}
The functions $R_p$ and $R_q$ satisfy
\[
    \lim_{y\to\infty}R_p(y)=\left(\frac{p-1}{p}\right)^{1/p},
    \qquad
    \lim_{y\downarrow0}R_q(y)=\left(\frac{q-1}{q}\right)^{1/q}.
\]
Moreover, the integrands above admit integrable dominators in the regimes $E\ge1$ for $A$ and $0<E\le1$ for $B$.
\end{lemma}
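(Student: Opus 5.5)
The two limits come straight from Lemma~\ref{lem:eta_asymptotics}. As $y\to\infty$ we have $\eta(y)\sim\left(\frac{p}{p-1}y\right)^{1/p}$, so
\[
    R_p(y)=\frac{y^{1/p}}{\eta(y)}\to\left(\frac{p-1}{p}\right)^{1/p}.
\]
In the same way, $\eta(y)\sim\left(\frac{q}{q-1}y\right)^{1/q}$ as $y\downarrow0$ gives $R_q(y)\to\left(\frac{q-1}{q}\right)^{1/q}$.

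For domination I will use a simple monotonicity fact. Since $h(\xi)\ge\frac{p-1}{p}\xi^p$ and $h$ is increasing, its inverse satisfies $\eta(y)\le\left(\frac{p}{p-1}y\right)^{1/p}$. Likewise $h(\xi)\ge\frac{q-1}{q}\xi^q$ gives $\eta(y)\le\left(\frac{q}{q-1}y\right)^{1/q}$. Both are upper bounds on $\eta$, so they give lower bounds on $R$, which is the wrong direction. What I need are upper bounds on $R_p$ and $R_q$, i.e. lower bounds on $\eta$.

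For $R_p$ and $y\ge y_0>0$: set $\xi=\eta(y)$. Then $\xi\ge\eta(y_0)>0$, and
\[
    y=h(\xi)\le C\,\xi^p \quad\text{for } \xi\ge\xi_0=\eta(y_0),
\]
because $\xi^q\le\xi_0^{q-p}\xi^p$ there. Hence $R_p(y)=y^{1/p}/\xi\le C^{1/p}$ on $[y_0,\infty)$. Near $y=0$, however, $R_p(y)\sim c\,y^{1/p-1/q}$, which blows up since $1/p<1/q$.

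This is the main obstacle. In the $A$-integral with $E\ge1$, the argument $E(1-t^p)$ tends to $0$ as $t\to1$. On that region I will bound $R_p(y)\le c\,y^{1/p-1/q}$ for $y\le1$, which holds because $\eta(y)\ge c'y^{1/q}$ for $y\le1$ by the same comparison argument. Since $E\ge1$, we have $(E(1-t^p))^{1/p-1/q}\le(1-t^p)^{1/p-1/q}$, because the exponent is negative. The integrand is therefore bounded by
\[
    C(1-t^p)^{-1/p}\max\{1,(1-t^p)^{1/p-1/q}\}\le C(1-t^p)^{-1/q},
\]
which is integrable on $[0,1]$ because $q>1$.

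For $B$ with $0<E\le1$, the argument $y=E(1-t^q)$ lies in $(0,1]$. On $(0,1]$, $R_q$ is bounded, since it is continuous with a finite limit at $0$ (equivalently, $h(\xi)\le C\xi^q$ for $\xi\le\eta(1)$). So the integrand is dominated by $C(1-t^q)^{-1/q}$, which is integrable.
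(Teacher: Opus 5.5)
Your proof is correct, but it gets the domination by a different mechanism from the paper's.

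\textbf{The paper's route.} The paper substitutes $y=h(\xi)$ and computes
$R_p(h(\xi))=\bigl(\frac{p-1}{p}+\frac{q-1}{q}\xi^{q-p}\bigr)^{1/p}$ and $R_q(h(\xi))=\bigl(\frac{p-1}{p}\xi^{p-q}+\frac{q-1}{q}\bigr)^{1/q}$. Since $h$ is increasing, $R_p$ is strictly decreasing and $R_q$ strictly increasing in $y$. Domination is then a single comparison: $R_p(E(1-t^p))\le R_p(1-t^p)$ for $E\ge1$, and $R_q(E(1-t^q))\le R_q(1-t^q)$ for $E\le1$. The dominators are the $E=1$ integrands themselves, so their integrability rests on their behaviour near $t=1$, equivalently on finiteness of $A(1)$ and $B(1)$.

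\textbf{Your route.} You use upper power bounds on $h$ in the two regimes, $h(\xi)\le C\xi^p$ for $\xi\ge\eta(1)$ and $h(\xi)\le C\xi^q$ for $\xi\le\eta(1)$. These give $\eta(y)\ge cy^{1/p}$ on $[1,\infty)$ and $\eta(y)\ge cy^{1/q}$ on $(0,1]$. You then split the $t$-range according to whether $E(1-t^p)$ lies above or below $1$, using $E^{1/p-1/q}\le1$ for $E\ge1$.

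\textbf{Comparison.} Your approach produces explicit global dominators $C(1-t^p)^{-1/q}$ and $C(1-t^q)^{-1/q}$, whose integrability is immediate. It also uses only the comparability of $h$ with $\xi^q$ near $0$ and with $\xi^p$ near $\infty$, so it would survive for nonhomogeneous operators where $R_p$ and $R_q$ fail to be monotone. The price is that you never obtain the monotonicity of $R_p$ and $R_q$, which the paper's proof yields as a by-product and then reuses. That monotonicity supplies the compact-interval majorants in Proposition~\ref{prop:AB_continuous}; your power bounds could be adapted to do this job. It is also the entire proof of Proposition~\ref{prop:AB_monotone}; your bounds cannot replace it there.
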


\begin{proof}
The limits follow from Lemma~\ref{lem:eta_asymptotics}. If $y=h(\xi)$, then
\[
    R_p(h(\xi))
    =\left(\frac{p-1}{p}+\frac{q-1}{q}\xi^{q-p}\right)^{1/p},
\]
which is strictly decreasing, while
\[
    R_q(h(\xi))
    =\left(\frac{p-1}{p}\xi^{p-q}+\frac{q-1}{q}\right)^{1/q},
\]
which is strictly increasing. These monotonicities give the stated domination. Near $t=1$, the first dominator behaves like $(1-t^p)^{-1/q}$ and the second like $(1-t^q)^{-1/q}$, both integrable because $q>1$.
\end{proof}

\begin{proposition}[Continuity of the parameter-free time maps]
\label{prop:AB_continuous}
The functions $A,B:(0,\infty)\to(0,\infty)$ are finite and continuous.
Consequently, for every fixed $(m,n)\in\mathbb N^2$, the map
\[
    (E,\lambda,\mu)
    \longmapsto
    2m\lambda^{-1/p}A(E)+2n\mu^{-1/q}B(E)
\]
is continuous on $(0,\infty)^3$.
\end{proposition}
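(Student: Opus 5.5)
The plan is to work with the rescaled representations
\[
    A(E)=p^{1/p}\int_0^1(1-t^p)^{-1/p}R_p(E(1-t^p))\,dt,
    \qquad
    B(E)=q^{1/q}\int_0^1(1-t^q)^{-1/q}R_q(E(1-t^q))\,dt,
\]
and to prove finiteness and continuity on each compact subinterval $[E_1,E_2]\subset(0,\infty)$ by dominated convergence. The dominators of Lemma~\ref{lem:R_limits} only cover the regimes $E\ge1$ for $A$ and $E\le1$ for $B$, so I will build a dominator directly on an arbitrary compact $E$-range. First, $\eta=h^{-1}$ is continuous and strictly positive on $(0,\infty)$, because $h$ is a continuous strictly increasing bijection of $[0,\infty)$. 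Hence each $R_m(y)=y^{1/m}/\eta(y)$ is continuous and positive on $(0,\infty)$. For each fixed $t\in[0,1)$, the integrand is therefore continuous in $E$.

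Second, I need a uniform bound for $R_m(y)$ when $0<y\le E_2$. The formulas in the proof of Lemma~\ref{lem:R_limits}, with $y=h(\xi)$, make $R_p$ decreasing and $R_q$ increasing as functions of $y$ (since $h$ is increasing). Consequently $R_q(y)\le R_q(E_2)$ for $y\le E_2$. For $R_p$ I will not bound it by a constant. Instead I write
\[
    R_p(y)=y^{1/p-1/q}R_q(y)\le y^{1/p-1/q}R_q(E_2).
\]
Set $y=E(1-t^p)$ with $E\ge E_1$. Since $1/p-1/q<0$, we get $y^{1/p-1/q}\le E_1^{1/p-1/q}(1-t^p)^{1/p-1/q}$. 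The integrand of $A$ is then bounded by
\[
    C(1-t^p)^{-1/p}(1-t^p)^{1/p-1/q}=C(1-t^p)^{-1/q}.
\]
For $B$ the integrand is bounded by $C(1-t^q)^{-1/q}$. Both dominators are integrable on $[0,1)$, because $1-t^r\sim r(1-t)$ near $t=1$ and $1/q<1$.

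Dominated convergence then gives that $A$ and $B$ are finite and continuous on $[E_1,E_2]$, and hence on all of $(0,\infty)$. They are positive because the integrands are positive. The continuity of
\[
    (E,\lambda,\mu)\mapsto 2m\lambda^{-1/p}A(E)+2n\mu^{-1/q}B(E)
\]
on $(0,\infty)^3$ follows at once, since it is built from products and sums of continuous functions.

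The main obstacle is the dominator for $A$ near $t=1$. There the argument $E(1-t^p)$ goes to $0$, and $R_p$ blows up like $y^{1/p-1/q}$. This forces the factorization through $R_q$ and the lower bound $E\ge E_1>0$, rather than a direct constant bound on $R_p$.
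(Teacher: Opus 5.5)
Your proof is correct. It shares the paper's overall strategy: dominated convergence on a compact energy range, using the $R_p,R_q$ representations and the monotonicity read off from $R_m(h(\xi))$. Where you differ is in how the integrable majorants are obtained, especially for $A$.

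The paper uses that $R_p$ is decreasing to bound $R_p(E(1-t^p))\le R_p(a(1-t^p))$ for $E\ge a$. It then argues that this majorant is integrable because its integral is $p^{-1/p}A(a)$, which is finite by Lemma~\ref{lem:tau_finite}. For $B$ it dominates by the integrand at the upper energy $b$, whose integral is $q^{-1/q}B(b)$.

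You instead factor $R_p(y)=y^{1/p-1/q}R_q(y)$ and use only that $R_q$ is increasing, together with $E\ge E_1$. This gives the explicit power-law majorants $C(1-t^p)^{-1/q}$ and $C(1-t^q)^{-1/q}$.

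Your route is self-contained. Finiteness of $A$ and $B$ falls out of the same estimate, with no appeal to Lemma~\ref{lem:tau_finite} or to the asymptotics of $\eta$. In effect, $R_q(y)\le R_q(E_2)$ is a global, quantitative form of the lower bound $\eta(y)\ge y^{1/q}/R_q(E_2)$ for $0<y\le E_2$, which is exactly what the finiteness lemma needs.

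The paper's route is shorter because it reuses the already-known finiteness of $A(a)$ and $B(b)$, but it depends on that earlier lemma.
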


\begin{proof}
Finiteness follows from Lemma~\ref{lem:tau_finite}, because
$A(E)=\tau_+(E;1)$ and $B(E)=\tau_-(E;1)$. We prove continuity using the representations involving $R_p$ and $R_q$.

Fix $E_0>0$ and let $E_j\to E_0$. For all sufficiently large $j$, the energies $E_j$ lie in a compact interval $[a,b]\subset(0,\infty)$. Since $R_p$ is strictly decreasing,
\[
    R_p(E_j(1-t^p))
    \le R_p(a(1-t^p))
    \qquad (0\le t<1),
\]
and the resulting majorant is integrable because its integral is a constant multiple of $A(a)$. Pointwise convergence follows from continuity of $\eta$. The dominated convergence theorem therefore gives $A(E_j)\to A(E_0)$.

Similarly, the strict increase of $R_q$ gives
\[
    R_q(E_j(1-t^q))
    \le R_q(b(1-t^q)),
\]
and the corresponding majorant is integrable because its integral is a constant multiple of $B(b)$. Dominated convergence gives $B(E_j)\to B(E_0)$. The joint continuity of the displayed length expression is then immediate.
\end{proof}

\begin{proposition}[Monotonicity of $A$ and $B$]
\label{prop:AB_monotone}
The function $A$ is strictly decreasing on $(0,\infty)$, and $B$ is strictly increasing on $(0,\infty)$.
\end{proposition}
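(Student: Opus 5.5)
We will read off both monotonicity statements from the $R$-representations just established:
\[
    A(E)=p^{1/p}\int_0^1 (1-t^p)^{-1/p}R_p(E(1-t^p))\,dt,
    \qquad
    B(E)=q^{1/q}\int_0^1 (1-t^q)^{-1/q}R_q(E(1-t^q))\,dt .
\]
The key input is the monotonicity of $R_p$ and $R_q$ along $y$. In the proof of Lemma~\ref{lem:R_limits} these were computed in the variable $\xi$:
\[
    R_p(h(\xi))=\Bigl(\tfrac{p-1}{p}+\tfrac{q-1}{q}\xi^{q-p}\Bigr)^{1/p},
    \qquad
    R_q(h(\xi))=\Bigl(\tfrac{p-1}{p}\xi^{p-q}+\tfrac{q-1}{q}\Bigr)^{1/q}.
\]
The first is strictly decreasing in $\xi>0$ because $q-p<0$. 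The second is strictly increasing because $p-q>0$. We will briefly rederive these formulas from $y=h(\xi)$ and $\eta(y)=\xi$. Since $h$ is a strictly increasing bijection of $(0,\infty)$, composing with $\eta=h^{-1}$ transfers the monotonicity. Hence $R_p$ is strictly decreasing and $R_q$ is strictly increasing on $(0,\infty)$.

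Now fix $0<E_1<E_2$. For every $t\in[0,1)$ we have $E_1(1-t^p)<E_2(1-t^p)$, so
\[
    R_p(E_2(1-t^p))<R_p(E_1(1-t^p))
\]
pointwise on $[0,1)$. Multiplying by the positive weight $(1-t^p)^{-1/p}$ preserves this strict inequality. Both integrals are finite by Proposition~\ref{prop:AB_continuous}. The integral of a nonnegative function that is strictly positive on a set of positive measure is positive. Integrating the difference therefore gives $A(E_2)<A(E_1)$. The same argument with $R_q$, whose monotonicity runs the other way, gives $B(E_1)<B(E_2)$.

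We expect no serious obstacle here. The only points needing care are the following.
\begin{enumerate}[label=\textup{(\alph*)}]
\item Strict pointwise monotonicity gives strict inequality of the integrals, which holds because the integrands are finite and integrable and differ on all of $[0,1)$.
\item The direction of monotonicity survives the change of variable through $\eta$.
\end{enumerate}
If one prefers to avoid the $\xi$-parametrization, an alternative is to differentiate $\log R_p(y)=\frac1p\log y-\log\eta(y)$. This uses $\eta'(y)=1/h'(\eta)$ together with the identity $y\,h'(\xi)$ compared against $p\,h(\xi)\cdot$ const. However, the explicit formula above is cleaner, and it is what we will use.
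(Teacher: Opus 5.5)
Your proposal is correct and is essentially the paper's argument. The paper also deduces both monotonicity statements directly from the $R_p$, $R_q$ representations of $A$ and $B$, using the strict decrease of $R_p$ and strict increase of $R_q$ that come from the $\xi$-formulas in Lemma~\ref{lem:R_limits}; your added details (strict pointwise inequality on $[0,1)$, finiteness of the integrals so that strictness survives integration, and the transfer of monotonicity through $\eta=h^{-1}$) make explicit what the paper's one-line proof leaves implicit.
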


\begin{proof}
The representation above and the strict decrease of $R_p$ show that
$A(E_1)>A(E_2)$ when $0<E_1<E_2$. The strict increase of $R_q$ similarly gives $B(E_1)<B(E_2)$.
\end{proof}

\begin{proposition}[Endpoint behavior]
\label{prop:AB_limits}
The following limits hold:
\[
    \lim_{E\downarrow0}A(E)=+\infty,
    \qquad
    \lim_{E\to\infty}A(E)=\frac{\pi_p}{2},
\]
and
\[
    \lim_{E\downarrow0}B(E)=\frac{\pi_q}{2},
    \qquad
    \lim_{E\to\infty}B(E)=+\infty.
\]
\end{proposition}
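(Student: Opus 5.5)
We use the $R$-representations of $A$ and $B$ from Section~\ref{sec:asymptotics}, namely
\[
    A(E)=p^{1/p}\int_0^1(1-t^p)^{-1/p}R_p(E(1-t^p))\,dt,
    \qquad
    B(E)=q^{1/q}\int_0^1(1-t^q)^{-1/q}R_q(E(1-t^q))\,dt,
\]
and treat the four limits in two groups.

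\textbf{The two finite limits.} As $E\to\infty$, for each fixed $t\in[0,1)$ we have $E(1-t^p)\to\infty$. Lemma~\ref{lem:R_limits} then gives $R_p(E(1-t^p))\to((p-1)/p)^{1/p}$ pointwise. For $E\ge1$, monotonicity of $R_p$ gives the integrable dominator $(1-t^p)^{-1/p}R_p(1-t^p)$; its integral is a multiple of $A(1)<\infty$. Dominated convergence then yields
\[
    \lim_{E\to\infty}A(E)=p^{1/p}\Big(\tfrac{p-1}{p}\Big)^{1/p}\int_0^1(1-t^p)^{-1/p}\,dt=(p-1)^{1/p}\int_0^1(1-t^p)^{-1/p}\,dt=\frac{\pi_p}{2},
\]
by \eqref{eq:pi_r}. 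The case $E\downarrow0$ for $B$ is symmetric. For $0<E\le1$ we use the dominator $(1-t^q)^{-1/q}R_q(1-t^q)$, whose integral is a multiple of $B(1)$. The pointwise limit is $((q-1)/q)^{1/q}$, so $B(E)\to (q-1)^{1/q}\int_0^1(1-t^q)^{-1/q}dt=\pi_q/2$.

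\textbf{The two infinite limits.} Here we use Fatou's lemma, or a direct lower bound, in place of domination. As $E\downarrow0$, Lemma~\ref{lem:eta_asymptotics} gives $\eta(y)\sim c\,y^{1/q}$ with $c=(q/(q-1))^{1/q}$. Hence
\[
    R_p(y)=\frac{y^{1/p}}{\eta(y)}\sim c^{-1}y^{1/p-1/q}\to+\infty
\]
as $y\downarrow0$, since $q<p$. So for every fixed $t\in[0,1)$ the integrand of $A$ tends to $+\infty$, and Fatou's lemma gives $\liminf_{E\downarrow0}A(E)=+\infty$. Similarly, as $y\to\infty$, $\eta(y)\sim C\,y^{1/p}$, so
\[
    R_q(y)\sim C^{-1}y^{1/q-1/p}\to+\infty.
\]
Fatou's lemma applied to the integrand of $B$ as $E\to\infty$ gives $B(E)\to+\infty$. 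As a cross-check, the same conclusion follows without Fatou: for $t\in[0,1/2]$ the integrand is bounded below by a constant times $R_p(E(1-2^{-p}))$, respectively $R_q(E(1-2^{-q}))$, and this factor blows up.

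\textbf{Main obstacle.} The delicate point is the domination in the finite-limit cases. We must confirm that the dominator has an integrable singularity at $t=1$. For the $A$-case, $R_p(y)\sim c'y^{1/p-1/q}$ as $y\downarrow0$, so near $t=1$ the dominator behaves like $(1-t^p)^{-1/p}(1-t^p)^{1/p-1/q}=(1-t^p)^{-1/q}$. For the $B$-case, $R_q$ stays bounded near $y=0$, so the dominator behaves like $(1-t^q)^{-1/q}$. Both exponents are below $1$ because $q>1$, so both dominators are integrable; Lemma~\ref{lem:R_limits} already records this. The infinite limits need only positivity of the integrands.
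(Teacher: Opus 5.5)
Your proposal is correct and follows the paper's proof: dominated convergence for the two finite limits, using the monotonicity-based dominators of Lemma~\ref{lem:R_limits}, and Fatou's lemma for the two divergent ones, with your integrability check near $t=1$ filling in what the paper leaves implicit. One small slip in your optional cross-check for $A$: since $R_p$ is decreasing, $R_p(E(1-2^{-p}))$ is an upper bound for the integrand factor on $t\in[0,1/2]$, not a lower bound; the natural lower bound is $R_p(E)$, which holds for all $t\in[0,1)$ and still tends to $+\infty$ as $E\downarrow0$.
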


\begin{proof}
For $A(E)$ as $E\to\infty$, apply dominated convergence using
Lemma~\ref{lem:R_limits}; this gives
\[
    \lim_{E\to\infty}A(E)
    =(p-1)^{1/p}\int_0^1(1-t^p)^{-1/p}\,dt
    =\frac{\pi_p}{2}.
\]
As $E\downarrow0$, $R_p(E(1-t^p))\to\infty$ pointwise, so Fatou's lemma gives $A(E)\to\infty$.

For $B(E)$ as $E\downarrow0$, dominated convergence gives
\[
    \lim_{E\downarrow0}B(E)
    =(q-1)^{1/q}\int_0^1(1-t^q)^{-1/q}\,dt
    =\frac{\pi_q}{2}.
\]
As $E\to\infty$, $R_q(E(1-t^q))\to\infty$ pointwise, and Fatou's lemma gives $B(E)\to\infty$.
\end{proof}

\section{One-Sign Solutions and the Trivial Spectral Regions}
\label{sec:one_sign}

The sign-changing analysis involves nodal counts
$(m,n)\in\mathcal I_{\mathrm{sc}}\subset\N^2$, so both $m$ and $n$ are
positive. The two remaining cases are the one-sign solutions. In terms of
nodal counts, these correspond formally to
\[
    (m,n)=(1,0)
    \qquad\text{and}\qquad
    (m,n)=(0,1).
\]
They play the role of the trivial branches in the classical Fu\v{c}ik
spectrum.

We first note that a nontrivial one-sign solution consists of a single
nodal interval.

\begin{lemma}[One-sign solutions have one nodal interval]
\label{lem:one_sign_single_interval}
If $u\ge0$, $u\not\equiv0$, is a weak solution of \eqref{eq:model}, then
\[
    u>0\qquad\text{on }(0,L).
\]
Similarly, if $u\le0$, $u\not\equiv0$, then
\[
    u<0\qquad\text{on }(0,L).
\]
\end{lemma}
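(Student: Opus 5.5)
The idea is to show that an interior zero of a one-sign solution would be a double zero, which the simple-zero result forbids. Take the case $u\ge0$, $u\not\equiv0$; the case $u\le0$ will follow by applying the same argument to $v=-u$, using the oddness of $\mathcal B$ and the roles of $\mu,q$ in place of $\lambda,p$.

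The first step is to secure the sign of the parameter so that the dynamical results apply. The set $\{u>0\}$ is nonempty and open in $(0,L)$, so it contains a positive nodal interval. Lemma~\ref{lem:positive_requires_lambda} then gives $\lambda>0$. The value of $\mu$ is not yet known, and Corollary~\ref{cor:simple_zeros} was stated for $\lambda,\mu>0$. I would avoid any assumption on $\mu$: since $u\ge0$, we have $u^-\equiv0$, so $u$ solves the same problem with $\mu$ replaced by any positive number (say $\mu=1$). This is immediate from the weak formulation \eqref{eq:weak}. With the pair $(\lambda,1)$, which lies in the positive quadrant, all of Sections~\ref{sec:energy}--\ref{sec:bvp} apply to $u$.

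Next, suppose $u(x_0)=0$ for some $x_0\in(0,L)$. Since $u\ge0$, the point $x_0$ is an interior minimum of the $C^1$ function $u$ (Proposition~\ref{prop:regularity}). Hence $u'(x_0)=0$. This contradicts Corollary~\ref{cor:simple_zeros}, because $u\not\equiv0$. Equivalently, the phase state at $x_0$ is $(0,0)$, and Corollary~\ref{cor:equilibrium_uniqueness} forces $u\equiv0$. Therefore $u>0$ on $(0,L)$.

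The only delicate point is the parameter bookkeeping: the uniqueness results require $\mu>0$, while the hypothesis says nothing about $\mu$. The substitution $\mu\mapsto1$ above handles this. Alternatively, one could note that the energy argument of Corollary~\ref{cor:equilibrium_uniqueness} involves only the term $\frac{\lambda}{p}(u^+)^p$ along a nonnegative trajectory. Everything else is a direct application of the earlier corollaries.
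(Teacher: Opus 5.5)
Your proof is correct and is essentially the paper's argument: an interior zero of a one-sign solution is an interior extremum, so $u'(x_0)=0$, the conserved energy vanishes there, and this forces $u\equiv0$. The only difference is bookkeeping. The paper recomputes the energy identity on positive nodal intervals, where $\mu$ never appears, and evaluates it at an endpoint of an ``adjacent'' nodal interval. Your substitution $\mu\mapsto1$ instead lets you invoke the global conservation law and Corollary~\ref{cor:equilibrium_uniqueness} on all of $[0,L]$, which also sidesteps the need for a positive nodal interval having $x_0$ as an endpoint.
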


\begin{proof}
Suppose first that $u\ge0$ and $u\not\equiv0$. Since $u$ has a positive
nodal interval, Lemma~\ref{lem:positive_requires_lambda} gives
$\lambda>0$. Because $u^-\equiv0$, the equation on the support of $u$
reduces to
\[
    -(\mathcal B(u'))'=\lambda u^{p-1}.
\]
The same calculation as in Proposition~\ref{prop:global_energy} gives
the energy identity
\[
    h(|u'|)+\frac{\lambda}{p}u^p=E
\]
on every positive nodal interval.

If $u(x_0)=0$ for some $x_0\in(0,L)$, then $x_0$ is a local minimum, so
$u'(x_0)=0$. Hence the energy on an adjacent positive nodal interval is
\[
    E
    =
    h(|u'(x_0)|)
    +
    \frac{\lambda}{p}u(x_0)^p
    =0.
\]
Both terms in the energy identity are nonnegative, so this forces
$u=u'=0$ throughout that nodal interval, a contradiction. Therefore
$u>0$ on $(0,L)$.

The nonpositive case follows in the same way after setting $v=-u>0$.
\end{proof}

\begin{proposition}[Positive one-hump branch]
\label{prop:positive_one_hump}
Fix $\mu\in\R$. Problem~\eqref{eq:model} has a strictly positive solution
if and only if
\[
    \lambda>
    \left(\frac{\pi_p}{L}\right)^p.
\]
For every such $\lambda$, there is a unique energy $E_+(\lambda)>0$
satisfying
\begin{equation}\label{eq:positive_one_hump_length}
    2\lambda^{-1/p}A(E_+(\lambda))=L,
\end{equation}
and the corresponding positive solution consists of one full hump.
\end{proposition}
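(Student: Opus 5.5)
The proof splits into necessity and sufficiency, and both directions reduce to the scalar equation \eqref{eq:positive_one_hump_length}. Note first that a strictly positive solution never takes negative values, so the term $\mu(u^-)^{q-1}$ vanishes identically. The value of $\mu$ is therefore irrelevant. We may also replace the nonlinearity by $\lambda(u^+)^{p-1}-\tilde\mu(u^-)^{q-1}$ with any $\tilde\mu>0$, for instance $\tilde\mu=1$, whenever we need the well-posedness results of Sections~\ref{sec:ivp}--\ref{sec:bvp}, which assume both parameters positive.

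\emph{Necessity.} Let $u$ be a strictly positive solution. Lemma~\ref{lem:positive_requires_lambda} gives $\lambda>0$. Applying Lemma~\ref{lem:one_sign_single_interval} and Theorem~\ref{thm:nodal_reduction} with $\tilde\mu$, we see that $(0,L)$ is a single positive nodal interval. Its length is $2\tau_+(E;\lambda)=2\lambda^{-1/p}A(E)$, where $E>0$ is the energy of $u$. Hence $L=2\lambda^{-1/p}A(E)$. By Propositions~\ref{prop:AB_monotone} and~\ref{prop:AB_limits}, $A(E)>\pi_p/2$ for every $E>0$. Indeed, $A$ is strictly decreasing with infimum $\pi_p/2$, which is not attained. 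It follows that $L>\lambda^{-1/p}\pi_p$, that is, $\lambda>(\pi_p/L)^p$.

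\emph{Sufficiency and uniqueness of $E_+$.} Let $\lambda>(\pi_p/L)^p$ and consider $f(E):=2\lambda^{-1/p}A(E)$. By Propositions~\ref{prop:AB_continuous}--\ref{prop:AB_limits}, $f$ is continuous and strictly decreasing. Its range is exactly $(\lambda^{-1/p}\pi_p,\infty)$, and this interval contains $L$. The intermediate value theorem together with strict monotonicity yields a unique $E_+(\lambda)$ with $f(E_+(\lambda))=L$. Proposition~\ref{prop:full_humps}(i) then supplies the full hump $P_{E_+,\lambda}$ on $[0,2\tau_+(E_+;\lambda)]=[0,L]$. This hump is positive in the interior, vanishes at both ends, lies in $C^1$, and satisfies $-(\mathcal B(P'))'=\lambda P^{p-1}=\lambda(P^+)^{p-1}-\mu(P^-)^{q-1}$. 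Proposition~\ref{prop:system_equivalence} shows that it is a weak solution on $(0,L)$, and it belongs to $W_0^{1,p}$ because it is $C^1$ with zero boundary values. The necessity argument shows that any positive solution is a single hump whose energy solves the same equation. Hence its energy equals $E_+(\lambda)$, and by the uniqueness in Proposition~\ref{prop:full_humps} the solution coincides with $P_{E_+,\lambda}$.

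\emph{Main obstacle.} The delicate point is the case $\mu\le0$. The earlier IVP uniqueness and nodal reduction results were stated only for $\lambda,\mu>0$, so they cannot be cited directly. The plan is to check carefully that, along a trajectory with $u>0$ on $(0,L)$, the system coincides with the one for $\tilde\mu=1$. The only points where $u$ vanishes are $0$ and $L$, where $u'\neq0$, so the trajectory immediately leaves the region where the $\mu$-term could matter, and the previous results transfer. The other subtle point is strictness. The inequality $A(E)>\pi_p/2$ is strict because $A$ is strictly decreasing and converges to $\pi_p/2$ only as $E\to\infty$, and this is exactly what excludes the endpoint $\lambda=(\pi_p/L)^p$.
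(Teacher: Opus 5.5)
Your proposal is correct and follows essentially the same route as the paper. You reduce a positive solution to a single hump of length $2\lambda^{-1/p}A(E)$, use that $A$ is a continuous strictly decreasing bijection of $(0,\infty)$ onto $(\pi_p/2,\infty)$ to obtain both the strict threshold and the unique $E_+(\lambda)$, and build the solution from Proposition~\ref{prop:full_humps}. Your auxiliary $\tilde\mu>0$ device makes explicit what the paper leaves implicit when it says that $\mu$ does not enter the equation, so the results proved for $\lambda,\mu>0$ may be cited even when $\mu\le0$. Your closing identification of every positive solution with $P_{E_+,\lambda}$ is a valid slight strengthening.
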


\begin{proof}
Suppose that $u$ is a strictly positive solution. By
Lemma~\ref{lem:one_sign_single_interval}, $(0,L)$ is its only nodal
interval. Since $u^-=0$, the parameter $\mu$ does not appear in the
equation, which becomes
\[
    -(\mathcal B(u'))'=\lambda u^{p-1}.
\]
Lemma~\ref{lem:positive_requires_lambda} gives $\lambda>0$.

Let $E>0$ denote the energy of the hump. Separating variables as in the
positive-hump part of the proof of Theorem~\ref{thm:nodal_reduction}
shows that its total length is
\[
    2\tau_+(E;\lambda).
\]
Since the hump fills the whole interval $(0,L)$,
\[
    2\tau_+(E;\lambda)=L.
\]
Using
\[
    \tau_+(E;\lambda)=\lambda^{-1/p}A(E),
\]
we obtain
\[
    2\lambda^{-1/p}A(E)=L,
\]
or equivalently,
\begin{equation}\label{eq:A_positive_condition}
    A(E)=\frac{L}{2}\lambda^{1/p}.
\end{equation}

By Propositions~\ref{prop:AB_monotone} and~\ref{prop:AB_limits}, the
function $A$ is continuous and strictly decreasing, with
\[
    \lim_{E\downarrow0}A(E)=+\infty,
    \qquad
    \lim_{E\to\infty}A(E)=\frac{\pi_p}{2}.
\]
Thus $A$ maps $(0,\infty)$ bijectively onto
\[
    \left(\frac{\pi_p}{2},\infty\right).
\]
Consequently, \eqref{eq:A_positive_condition} has a unique solution
$E>0$ if and only if
\[
    \frac{L}{2}\lambda^{1/p}>\frac{\pi_p}{2},
\]
which is equivalent to
\[
    \lambda>
    \left(\frac{\pi_p}{L}\right)^p.
\]

Conversely, if this strict inequality holds, there is a unique
$E_+(\lambda)>0$ satisfying
\eqref{eq:positive_one_hump_length}. Proposition~\ref{prop:full_humps}
then produces the positive full hump of energy $E_+(\lambda)$, and its
length is exactly $L$. Since its negative part vanishes, it satisfies
\eqref{eq:model} for every $\mu\in\R$.
\end{proof}

\begin{proposition}[Negative one-hump branch]
\label{prop:negative_one_hump}
Fix $\lambda\in\R$. Problem~\eqref{eq:model} has a strictly negative
solution if and only if
\[
    \mu>
    \left(\frac{\pi_q}{L}\right)^q.
\]
For every such $\mu$, there is a unique energy $E_-(\mu)>0$ satisfying
\begin{equation}\label{eq:negative_one_hump_length}
    2\mu^{-1/q}B(E_-(\mu))=L,
\end{equation}
and the corresponding negative solution consists of one full hump.
\end{proposition}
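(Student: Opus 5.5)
The proof will mirror that of Proposition~\ref{prop:positive_one_hump}, with $v=-u$ replacing $u$, $\mu$ replacing $\lambda$, the exponent $q$ replacing $p$, and $B$ replacing $A$. The only real change is that $B$ is increasing rather than decreasing, and this alters which endpoint limit governs the range.

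\emph{Necessity.} Let $u$ be a strictly negative solution. By Lemma~\ref{lem:one_sign_single_interval}, $(0,L)$ is its only nodal interval. Since $u^+\equiv0$, the parameter $\lambda$ drops out, and $v=-u>0$ satisfies $-(\mathcal B(v'))'=\mu v^{q-1}$ by oddness of $\mathcal B$. Lemma~\ref{lem:negative_requires_mu} gives $\mu>0$. Let $E>0$ be the energy, given by $h(|v'|)+\frac{\mu}{q}v^q=E$. Separating variables on the two monotone halves, as in the negative-interval part of Theorem~\ref{thm:nodal_reduction}, the length of the hump is $2\tau_-(E;\mu)=2\mu^{-1/q}B(E)$. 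Setting this equal to $L$ gives
\[
    B(E)=\frac{L}{2}\mu^{1/q}.
\]

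\emph{Range of $B$.} By Propositions~\ref{prop:AB_continuous}, \ref{prop:AB_monotone} and~\ref{prop:AB_limits}, $B$ is continuous and strictly increasing on $(0,\infty)$, with $B(E)\to\pi_q/2$ as $E\downarrow0$ and $B(E)\to\infty$ as $E\to\infty$. Hence $B$ is a bijection of $(0,\infty)$ onto $(\pi_q/2,\infty)$; the infimum $\pi_q/2$ is not attained because $B$ is strictly increasing. It follows that the displayed equation has a solution $E>0$ if and only if $\frac{L}{2}\mu^{1/q}>\frac{\pi_q}{2}$, that is, $\mu>(\pi_q/L)^q$, and that this solution $E_-(\mu)$ is then unique. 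This settles necessity together with the uniqueness of the energy.

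\emph{Sufficiency.} For $\mu>(\pi_q/L)^q$, take $E=E_-(\mu)$. Proposition~\ref{prop:full_humps}(ii) produces the negative full hump $N_{E,\mu}$ on $[0,2\tau_-(E;\mu)]=[0,L]$, which vanishes at both endpoints and is negative in the interior. Its positive part vanishes identically, so it satisfies \eqref{eq:model} for every $\lambda\in\R$. It is $C^1$ with $\mathcal B(N')\in C^1$, so Proposition~\ref{prop:system_equivalence} makes it a weak solution.

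No step is a genuine obstacle. The one point needing care is the direction of the monotonicity: the threshold must come from the small-energy limit $B(0^+)=\pi_q/2$, not the large-energy limit used for $A$, and one must note that it is not attained, so the inequality is strict.
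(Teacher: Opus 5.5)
Your proposal is correct and follows the paper's proof essentially step for step. You pass to $v=-u$, use Lemma~\ref{lem:one_sign_single_interval} and the negative-hump time map to reduce to $B(E)=\frac{L}{2}\mu^{1/q}$, invert $B$ as a bijection onto $(\pi_q/2,\infty)$, and invoke Proposition~\ref{prop:full_humps}(ii) for sufficiency. Your added remarks are minor clarifications the paper leaves implicit: that $\pi_q/2$ is not attained, and that the hump is a weak solution via Proposition~\ref{prop:system_equivalence}.
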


\begin{proof}
Suppose that $u$ is a strictly negative solution and set
\[
    v:=-u>0.
\]
Then $u^+=0$, and since $\mathcal B$ is odd, the differential equation
becomes
\[
    -(\mathcal B(v'))'=\mu v^{q-1}.
\]
Lemma~\ref{lem:negative_requires_mu} gives $\mu>0$.

By Lemma~\ref{lem:one_sign_single_interval}, $(0,L)$ is the only nodal
interval. If $E>0$ denotes its energy, the negative-hump time map gives
\[
    2\tau_-(E;\mu)=L.
\]
Using
\[
    \tau_-(E;\mu)=\mu^{-1/q}B(E),
\]
we obtain
\[
    2\mu^{-1/q}B(E)=L,
\]
or equivalently,
\begin{equation}\label{eq:B_negative_condition}
    B(E)=\frac{L}{2}\mu^{1/q}.
\end{equation}

By Propositions~\ref{prop:AB_monotone} and~\ref{prop:AB_limits}, $B$ is
continuous and strictly increasing, with
\[
    \lim_{E\downarrow0}B(E)=\frac{\pi_q}{2},
    \qquad
    \lim_{E\to\infty}B(E)=+\infty.
\]
Hence $B$ maps $(0,\infty)$ bijectively onto
\[
    \left(\frac{\pi_q}{2},\infty\right).
\]
Therefore \eqref{eq:B_negative_condition} has a unique solution $E>0$
if and only if
\[
    \frac{L}{2}\mu^{1/q}>\frac{\pi_q}{2},
\]
or equivalently,
\[
    \mu>
    \left(\frac{\pi_q}{L}\right)^q.
\]

Conversely, whenever this inequality holds, there is a unique
$E_-(\mu)>0$ satisfying \eqref{eq:negative_one_hump_length}.
Proposition~\ref{prop:full_humps} gives the corresponding negative full
hump of length $L$. Since its positive part vanishes, it satisfies
\eqref{eq:model} for every $\lambda\in\R$.
\end{proof}

The preceding propositions identify the two one-sign parts of the
spectrum. Define
\[
    \mathcal P_+(L)
    :=
    \left\{
        (\lambda,\mu)\in\R^2:
        \lambda>
        \left(\frac{\pi_p}{L}\right)^p
    \right\},
\]
and
\[
    \mathcal P_-(L)
    :=
    \left\{
        (\lambda,\mu)\in\R^2:
        \mu>
        \left(\frac{\pi_q}{L}\right)^q
    \right\}.
\]
Then $\mathcal P_+(L)$ is precisely the set of parameters for which a
strictly positive solution exists, while $\mathcal P_-(L)$ is precisely
the set of parameters for which a strictly negative solution exists.

In particular, within the positive quadrant,
\[
    \mathcal P_+(L)\cap(0,\infty)^2
    =
    \left(
        \left(\frac{\pi_p}{L}\right)^p,\infty
    \right)
    \times(0,\infty),
\]
whereas
\[
    \mathcal P_-(L)\cap(0,\infty)^2
    =
    (0,\infty)
    \times
    \left(
        \left(\frac{\pi_q}{L}\right)^q,\infty
    \right).
\]
Thus the one-sign portions of the spectrum are regions rather than single
vertical or horizontal lines.

\begin{remark}[Comparison with the homogeneous $p$-Laplacian]
\label{rem:homogeneous_comparison}
For the classical homogeneous $p$-Laplacian Fu\v{c}ik problem
\[
    -(\phi_p(u'))'
    =
    \alpha(u^+)^{p-1}
    -
    \beta(u^-)^{p-1},
    \qquad
    u(0)=u(L)=0,
\]
the homogeneity of the operator removes the amplitude, or equivalently the
energy, from the nodal-length condition. In particular, a positive full hump
has length
\[
    \pi_p\alpha^{-1/p},
\]
so a strictly positive Dirichlet solution exists only when
\[
    \alpha=\left(\frac{\pi_p}{L}\right)^p.
\]
Since $\beta$ does not enter the equation for a positive solution, this gives
the vertical line
\[
    \alpha=\left(\frac{\pi_p}{L}\right)^p.
\]
Similarly, the negative one-sign branch is the horizontal line
\[
    \beta=\left(\frac{\pi_p}{L}\right)^p.
\]

For \eqref{eq:model}, the situation is different because the operator
$\mathcal B=\phi_p+\phi_q$ is not homogeneous. The positive one-hump length
is
\[
    2\lambda^{-1/p}A(E),
\]
and the negative one-hump length is
\[
    2\mu^{-1/q}B(E),
\]
so the energy cannot be scaled away. By
Proposition~\ref{prop:AB_limits},
\[
    A((0,\infty))
    =
    \left(\frac{\pi_p}{2},\infty\right),
    \qquad
    B((0,\infty))
    =
    \left(\frac{\pi_q}{2},\infty\right).
\]
Consequently, the one-sign portions of the spectrum are the regions
\[
    \mathcal P_+(L)
    =
    \left\{
        (\lambda,\mu)\in\R^2:
        \lambda>
        \left(\frac{\pi_p}{L}\right)^p
    \right\},
\]
and
\[
    \mathcal P_-(L)
    =
    \left\{
        (\lambda,\mu)\in\R^2:
        \mu>
        \left(\frac{\pi_q}{L}\right)^q
    \right\}.
\]
Thus the vertical and horizontal trivial lines of the homogeneous
$p$-Laplacian are replaced here by vertical and horizontal spectral regions.
\end{remark}

\section{Spectral Regions}
\label{sec:branches}

A sign-changing solution alternates signs. Throughout this section,
$\N:=\{1,2,3,\ldots\}$. If a sign-changing solution has $m$ positive humps and $n$ negative
humps, then $|m-n|\le1$. Define
\[
    \mathcal I_{\mathrm{sc}}
    :=
    \{(m,n)\in\N^2:|m-n|\le1\}.
\]
For $(m,n)\in\mathcal I_{\mathrm{sc}}$, define
\begin{equation}\label{eq:length_function}
    \mathcal L_{m,n}(E;\lambda,\mu)
    :=
    2m\lambda^{-1/p}A(E)
    +
    2n\mu^{-1/q}B(E),
\end{equation}
and
\[
    M_{m,n}(\lambda,\mu)
    :=
    \min_{E>0}\mathcal L_{m,n}(E;\lambda,\mu).
\]

\begin{proposition}[Existence of the minimum]
\label{prop:M_exists}
For every $(m,n)\in\mathcal I_{\mathrm{sc}}$ and $\lambda,\mu>0$, the minimum $M_{m,n}(\lambda,\mu)$ is attained.
\end{proposition}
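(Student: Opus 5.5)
The plan is to show that $E\mapsto \mathcal L_{m,n}(E;\lambda,\mu)$ is continuous on $(0,\infty)$ and tends to $+\infty$ at both ends of this interval, and then to apply a Weierstrass-type argument on a compact subinterval.

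Fix $(m,n)\in\mathcal I_{\mathrm{sc}}$ and $\lambda,\mu>0$, and write $f(E):=\mathcal L_{m,n}(E;\lambda,\mu)=2m\lambda^{-1/p}A(E)+2n\mu^{-1/q}B(E)$. By Proposition~\ref{prop:AB_continuous}, $f$ is finite, positive and continuous on $(0,\infty)$. Since $m,n\ge1$, both coefficients $2m\lambda^{-1/p}$ and $2n\mu^{-1/q}$ are strictly positive. Because $A$ and $B$ are positive, each term of $f$ bounds $f$ from below.

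The key step is the coercivity at the endpoints. By Proposition~\ref{prop:AB_limits}, $A(E)\to+\infty$ as $E\downarrow0$. Hence $f(E)\ge 2m\lambda^{-1/p}A(E)\to+\infty$ as $E\downarrow0$. Likewise $B(E)\to+\infty$ as $E\to\infty$, so $f(E)\ge 2n\mu^{-1/q}B(E)\to+\infty$ as $E\to\infty$. This is exactly where the hypothesis $m,n\ge1$ is needed: the positive humps contribute the blow-up at small energy and the negative humps contribute the blow-up at large energy. For one-sign counts such as $(1,0)$ or $(0,1)$ the infimum would not be attained, and would equal $\pi_p\lambda^{-1/p}$ or $\pi_q\mu^{-1/q}$ respectively.

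To conclude, set $c:=f(1)$. By the two limits there exist $0<\delta<1<K$ such that $f(E)>c$ whenever $E\in(0,\delta)$ or $E\in(K,\infty)$. On the compact interval $[\delta,K]$, the continuous function $f$ attains a minimum at some $E_*$. Since $1\in[\delta,K]$, we have $f(E_*)\le c$, and therefore $f(E_*)\le f(E)$ for all $E>0$. So $M_{m,n}(\lambda,\mu)=f(E_*)$ is attained.

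I expect no serious obstacle here, since the substantive inputs, namely continuity of $A,B$ and their endpoint limits, are already established. The only point needing care is to invoke continuity from Proposition~\ref{prop:AB_continuous} rather than monotonicity alone.
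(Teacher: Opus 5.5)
Your proof is correct and follows the paper's argument: continuity of $\mathcal L_{m,n}$ from Proposition~\ref{prop:AB_continuous}, blow-up at both ends from Proposition~\ref{prop:AB_limits} (which uses $m,n\ge1$), and a minimum on a compact subinterval. The only difference is that you make the compact-interval step explicit through the comparison value $f(1)$, which the paper leaves implicit.
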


\begin{proof}
By Propositions~\ref{prop:AB_continuous} and~\ref{prop:AB_limits}, the length function is continuous and
\[
    \mathcal L_{m,n}(E;\lambda,\mu)\to\infty
\]
as $E\downarrow0$ and as $E\to\infty$. Continuity therefore gives a minimum on a compact subinterval of $(0,\infty)$.
\end{proof}

Define
\[
    \Pi_{m,n}(L)
    :=
    \{(\lambda,\mu)\in(0,\infty)^2:
       \exists E>0,\ \mathcal L_{m,n}(E;\lambda,\mu)=L\}.
\]

\begin{theorem}[Threshold characterization]
\label{thm:threshold}
For every $(m,n)\in\mathcal I_{\mathrm{sc}}$,
\[
    \Pi_{m,n}(L)
    =
    \{(\lambda,\mu)\in(0,\infty)^2:
       M_{m,n}(\lambda,\mu)\le L\}.
\]
\end{theorem}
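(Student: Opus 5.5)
The plan is to prove the two inclusions separately, treating $(\lambda,\mu)\in(0,\infty)^2$ as fixed and studying the one-variable function
\[
    f(E):=\mathcal L_{m,n}(E;\lambda,\mu),\qquad E>0.
\]
Everything reduces to two facts already established: $f$ is continuous on $(0,\infty)$ (Proposition~\ref{prop:AB_continuous}), and $f(E)\to+\infty$ both as $E\downarrow0$ and as $E\to\infty$. The latter holds because $m,n\ge1$, the terms $2m\lambda^{-1/p}A(E)$ and $2n\mu^{-1/q}B(E)$ are positive, and Proposition~\ref{prop:AB_limits} gives $A(E)\to\infty$ at $0$ and $B(E)\to\infty$ at $\infty$. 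By Proposition~\ref{prop:M_exists}, the value $M_{m,n}(\lambda,\mu)=\min f$ is attained at some $E_0>0$.

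\emph{Inclusion $\subseteq$.} If $(\lambda,\mu)\in\Pi_{m,n}(L)$, there is some $E>0$ with $f(E)=L$. Then by definition of the minimum, $M_{m,n}(\lambda,\mu)\le f(E)=L$. This direction is immediate.

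\emph{Inclusion $\supseteq$.} Suppose $M_{m,n}(\lambda,\mu)\le L$, and split into two cases.
\begin{enumerate}[label=\textup{(\alph*)}]
\item If $M_{m,n}(\lambda,\mu)=L$, then $E=E_0$ itself satisfies $f(E)=L$.
\item If $M_{m,n}(\lambda,\mu)<L$, use $f(E)\to\infty$ as $E\to\infty$ to choose $E_1>E_0$ with $f(E_1)>L$. Since $f$ is continuous on $[E_0,E_1]$ and $f(E_0)<L<f(E_1)$, the intermediate value theorem gives $E\in(E_0,E_1)$ with $f(E)=L$.
\end{enumerate}
In either case $(\lambda,\mu)\in\Pi_{m,n}(L)$. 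In case (b) the same argument on $(0,E_0)$, using $f(E)\to\infty$ as $E\downarrow0$, also produces a second solution, but this is not needed for the set equality.

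There is no serious obstacle here. The only point requiring care is that both endpoint blow-ups genuinely need $m\ge1$ and $n\ge1$. This is guaranteed by $(m,n)\in\mathcal I_{\mathrm{sc}}\subset\N^2$, and it is exactly what makes the minimum in $M_{m,n}$ exist. Without $n\ge1$, for instance, $f$ would only tend to $2m\lambda^{-1/p}\pi_p/2$ as $E\to\infty$, and the minimum would not be attained.
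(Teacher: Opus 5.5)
Your proposal is correct and follows essentially the same route as the paper: the trivial inclusion, then the cases $M_{m,n}(\lambda,\mu)=L$ (take the minimizer) and $M_{m,n}(\lambda,\mu)<L$ (intermediate value theorem from the minimizer, using divergence of $\mathcal L_{m,n}$ at the endpoints). The only difference is that the paper applies the intermediate value theorem on both sides of the minimizer to get two roots, whereas you correctly note that one side suffices for the set equality.
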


\begin{proof}
If $(\lambda,\mu)\in\Pi_{m,n}(L)$, then some $E_0>0$ satisfies
$\mathcal L_{m,n}(E_0;\lambda,\mu)=L$, and hence
$M_{m,n}(\lambda,\mu)\le L$.

Conversely, let $E_*$ be a minimizing energy. If
$M_{m,n}(\lambda,\mu)=L$, then $E_*$ itself satisfies the length equation. If $M_{m,n}(\lambda,\mu)<L$, endpoint divergence provides energies $0<E_-<E_*<E_+$ such that
\[
    \mathcal L_{m,n}(E_-;\lambda,\mu)>L,
    \qquad
    \mathcal L_{m,n}(E_+;\lambda,\mu)>L.
\]
Continuity and the intermediate value theorem give one root in $(E_-,E_*)$ and another in $(E_*,E_+)$. In particular, at least one energy satisfies the length equation, so $(\lambda,\mu)\in\Pi_{m,n}(L)$.
\end{proof}

\begin{theorem}[Full spectrum characterization]
\label{thm:full_spectrum}
Let
\[
    \Sigma_{p,q}(L)
    :=
    \{(\lambda,\mu)\in\R^2:
      \eqref{eq:model}\text{ has a nontrivial weak solution}\}.
\]
Then
\[
    \Sigma_{p,q}(L)
    =
    \mathcal P_+(L)
    \cup
    \mathcal P_-(L)
    \cup
    \bigcup_{(m,n)\in\mathcal I_{\mathrm{sc}}}\Pi_{m,n}(L).
\]
\end{theorem}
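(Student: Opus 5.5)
We prove the two inclusions separately, splitting nontrivial solutions according to whether they change sign.

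\emph{Inclusion $\subseteq$.} Let $(\lambda,\mu)\in\Sigma_{p,q}(L)$ and let $u$ be a nontrivial weak solution.

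If $u$ does not change sign, Lemma~\ref{lem:one_sign_single_interval} makes $u$ strictly positive or strictly negative on $(0,L)$. Proposition~\ref{prop:positive_one_hump} or Proposition~\ref{prop:negative_one_hump} then places $(\lambda,\mu)$ in $\mathcal P_+(L)$ or $\mathcal P_-(L)$. These propositions are stated for an arbitrary value of the other parameter, so the possibly nonpositive $\mu$ or $\lambda$ causes no difficulty.

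If $u$ changes sign, Corollary~\ref{cor:quadrant_reduction} gives $\lambda,\mu>0$, so Theorem~\ref{thm:nodal_reduction} applies. It gives finitely many nodal intervals with a common energy $E>0$; positive intervals have length $2\lambda^{-1/p}A(E)$ and negative ones $2\mu^{-1/q}B(E)$.
\begin{itemize}
\item By Corollary~\ref{cor:simple_zeros}, every interior zero is a transverse crossing, so consecutive nodal intervals have opposite signs.
\item Hence the counts $m\ge1$ positive and $n\ge1$ negative satisfy $|m-n|\le1$.
\item The intervals tile $(0,L)$ up to finitely many points, so $\mathcal L_{m,n}(E;\lambda,\mu)=L$.
\end{itemize}
Thus $(\lambda,\mu)\in\Pi_{m,n}(L)$ with $(m,n)\in\mathcal I_{\mathrm{sc}}$.

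\emph{Inclusion $\supseteq$.} The sets $\mathcal P_\pm(L)$ are already handled by the converse parts of Propositions~\ref{prop:positive_one_hump} and~\ref{prop:negative_one_hump}.

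For $(\lambda,\mu)\in\Pi_{m,n}(L)$, pick $E>0$ with $\mathcal L_{m,n}(E;\lambda,\mu)=L$. Choose an alternating sign pattern of length $m+n$ with $m$ plus signs; this is possible since $|m-n|\le1$. If $m=n$, either starting sign works. Concatenate the corresponding translates of the full humps $P_{E,\lambda}$ and $N_{E,\mu}$ from Proposition~\ref{prop:full_humps}. Their total length is exactly $L$, and the glued function vanishes at $0$ and $L$.

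The key point is $C^1$ matching at each junction. A positive hump ends with slope $-\eta(E)$ and a negative hump begins with slope $-\eta(E)$; symmetrically, a negative hump ends with slope $\eta(E)$ and a positive hump begins with slope $\eta(E)$. So $u$ is $C^1$, and $w=\mathcal B(u')$ is continuous across the junctions. On each piece, $w'=-g_{\lambda,\mu}(u)$ holds classically. Since $g_{\lambda,\mu}(u)$ is continuous and the flux is continuous at the finitely many junctions, $w$ is $C^1$ on $[0,L]$ and satisfies \eqref{eq:flux} everywhere. Proposition~\ref{prop:system_equivalence} then gives a weak solution, via integration by parts on each piece with boundary terms cancelling. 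It is nontrivial, so $(\lambda,\mu)\in\Sigma_{p,q}(L)$.

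The step needing the most care is the converse gluing, specifically verifying that the concatenated function lies in $X$ and satisfies the weak formulation across junctions. This reduces to continuity of $w$ there, which follows from the equal endpoint slopes $\pm\eta(E)$. I would also state explicitly that the alternation argument (no two adjacent humps of the same sign) relies on simplicity of zeros.
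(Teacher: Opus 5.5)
Your proposal is correct and follows the paper's proof essentially step for step. The one-sign case goes through the one-hump propositions, the sign-changing case through quadrant reduction and the nodal reduction theorem, and the converse through concatenating alternating full humps whose endpoint slopes $\pm\eta(E)$ match. You also fill in details the paper leaves implicit: you derive $|m-n|\le1$ from the simplicity of zeros, you use Lemma~\ref{lem:one_sign_single_interval} to pass from ``does not change sign'' to ``strictly one-signed,'' and you check that the glued flux is $C^1$ on $[0,L]$.
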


\begin{proof}
Let $(\lambda,\mu)$ belong to the spectrum and let $u$ be a nontrivial
solution. If $u$ is nonnegative or nonpositive, Propositions~
\ref{prop:positive_one_hump} and~\ref{prop:negative_one_hump} show that
\[
    (\lambda,\mu)\in
    \mathcal P_+(L)\cup\mathcal P_-(L).
\]

If $u$ changes sign, Corollary~\ref{cor:quadrant_reduction} gives
$\lambda,\mu>0$. Let $m$ and $n$ be the numbers of positive and negative
nodal intervals. Theorem~\ref{thm:nodal_reduction} gives
\[
    \mathcal L_{m,n}(E;\lambda,\mu)=L,
\]
so the parameter lies in $\Pi_{m,n}(L)$.

Conversely, Propositions~\ref{prop:positive_one_hump} and
\ref{prop:negative_one_hump} produce a one-sign solution for every
parameter in $\mathcal P_+(L)$ or $\mathcal P_-(L)$. If
$(\lambda,\mu)\in\Pi_{m,n}(L)$, choose an energy satisfying the length
equation and concatenate alternating positive and negative full humps.
Their endpoint derivative magnitudes are all $\eta(E)$, with signs that
match across each node. Thus the concatenated function and its flux are
continuous, and the piecewise equation holds globally in the weak sense.
If $u$ changes sign, Corollary~\ref{cor:quadrant_reduction} gives
$\lambda,\mu>0$. Let $m$ and $n$ be the numbers of positive and negative nodal intervals. Theorem~\ref{thm:nodal_reduction} gives
$\mathcal L_{m,n}(E;\lambda,\mu)=L$, so the parameter lies in
$\Pi_{m,n}(L)$.

Conversely, the full-hump construction produces a one-sign solution for every parameter in $\mathcal P_+(L)$ or $\mathcal P_-(L)$. If $(\lambda,\mu)\in\Pi_{m,n}(L)$, choose an energy satisfying the length equation and concatenate alternating positive and negative full humps. Their endpoint derivative magnitudes are all $\eta(E)$, with signs that match across each node. Thus the concatenated function and its flux are continuous, and the piecewise equation holds globally in the weak sense.
\end{proof}

\section{Boundary Formulas and Asymptotes}
\label{sec:boundary}

For fixed $\lambda>0$, define
\[
    \mathcal E_{m,\lambda}
    :=
    \{E>0:L>2m\lambda^{-1/p}A(E)\}.
\]
If this set is nonempty, define
\begin{equation}\label{eq:mu_star}
    \mu^*_{m,n}(\lambda)
    :=
    \inf_{E\in\mathcal E_{m,\lambda}}
    \left[
      \frac{2nB(E)}{L-2m\lambda^{-1/p}A(E)}
    \right]^q;
\end{equation}
otherwise set $\mu^*_{m,n}(\lambda)=+\infty$.

\begin{theorem}[Boundary formula]
\label{thm:boundary_formula}
For every $(m,n)\in\mathcal I_{\mathrm{sc}}$,
\[
    (\lambda,\mu)\in\Pi_{m,n}(L)
    \quad\Longleftrightarrow\quad
    \mu\ge\mu^*_{m,n}(\lambda).
\]
Whenever $\mathcal E_{m,\lambda}\ne\varnothing$, the infimum in
\eqref{eq:mu_star} is attained.
\end{theorem}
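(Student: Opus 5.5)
The plan is to fix $\lambda>0$ and solve the length equation for $\mu$ at each admissible energy. For $E>0$ and $\mu>0$, the equation $\mathcal L_{m,n}(E;\lambda,\mu)=L$ reads
\[
    2n\mu^{-1/q}B(E)=L-2m\lambda^{-1/p}A(E).
\]
The left side is positive because $B>0$ and $n\ge1$. So the equation can hold only if $E\in\mathcal E_{m,\lambda}$. In that case it is equivalent to $\mu=G(E)$, where
\[
    G(E):=\left[\frac{2nB(E)}{L-2m\lambda^{-1/p}A(E)}\right]^q .
\]
Hence $(\lambda,\mu)\in\Pi_{m,n}(L)$ if and only if $\mu\in G(\mathcal E_{m,\lambda})$. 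If $\mathcal E_{m,\lambda}=\varnothing$, then $\Pi_{m,n}(L)$ contains no point with this $\lambda$, which agrees with $\mu\ge+\infty$ being impossible. From now on I assume $\mathcal E_{m,\lambda}\ne\varnothing$.

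\textbf{Structure of $\mathcal E_{m,\lambda}$.} By Proposition~\ref{prop:AB_monotone}, $A$ is strictly decreasing, and by Proposition~\ref{prop:AB_continuous} it is continuous. Therefore $\mathcal E_{m,\lambda}=(E_0,\infty)$ for some $E_0\ge0$. If $E_0>0$, then $2m\lambda^{-1/p}A(E_0)=L$ by continuity. If $E_0=0$, then $A(E)\to\infty$ as $E\downarrow0$ (Proposition~\ref{prop:AB_limits}) makes $\mathcal E_{m,\lambda}$ a proper subinterval, so this case cannot occur. Since $\mathcal E_{m,\lambda}$ is nonempty, $E_0<\infty$.

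\textbf{Endpoint behavior of $G$.} The function $G$ is continuous and positive on $(E_0,\infty)$. As $E\downarrow E_0$, the denominator tends to $0^+$ while $B(E)\to B(E_0)>0$, so $G\to+\infty$. As $E\to\infty$, the denominator tends to $L-m\lambda^{-1/p}\pi_p$, which is finite, while $B(E)\to\infty$, so again $G\to+\infty$.

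\textbf{Attainment and conclusion.} These two limits give the main point, namely that the infimum is attained. I would argue exactly as in Proposition~\ref{prop:M_exists}. Pick any $E_1\in\mathcal E_{m,\lambda}$. The endpoint limits give a compact interval $[a,b]\subset(E_0,\infty)$ outside which $G>G(E_1)$. A minimizer of $G$ on $[a,b]$ is then a global minimizer, so $\mu^*_{m,n}(\lambda)=G(E_*)$ for some $E_*$. Continuity of $G$ on the connected set $(E_0,\infty)$, together with $G\to\infty$ at both ends, gives
\[
    G(\mathcal E_{m,\lambda})=[\mu^*_{m,n}(\lambda),\infty)
\]
by the intermediate value theorem. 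This proves the equivalence $\mu\ge\mu^*_{m,n}(\lambda)$.

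The only delicate step is the endpoint behavior as $E\downarrow E_0$ and as $E\to\infty$. This needs positivity of $B$, the existence of the limit of $A$ at infinity, and the divergence of $B$, all of which are already available from Propositions~\ref{prop:AB_continuous} and~\ref{prop:AB_limits}.
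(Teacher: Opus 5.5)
Your proof is correct and follows essentially the paper's argument: you show that $\mathcal E_{m,\lambda}=(E_0,\infty)$, that $G\to+\infty$ at both ends, and that the minimum of $G$ is therefore attained. The one difference is that you identify the fiber of $\Pi_{m,n}(L)$ over $\lambda$ directly as the range $G(\mathcal E_{m,\lambda})$ and fill it out with the intermediate value theorem, whereas the paper goes through Theorem~\ref{thm:threshold} and the monotonicity of $\mathcal L_{m,n}$ in $\mu$, which makes your version self-contained.
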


\begin{proof}
By Theorem~\ref{thm:threshold}, membership is equivalent to the existence of $E>0$ such that
\[
    2m\lambda^{-1/p}A(E)+2n\mu^{-1/q}B(E)\le L.
\]
For an admissible $E$, this is equivalent to
\[
    \mu\ge
    \left[
      \frac{2nB(E)}{L-2m\lambda^{-1/p}A(E)}
    \right]^q.
\]

It remains to justify the boundary value. If $\mathcal E_{m,\lambda}$ is nonempty, strict monotonicity of $A$ gives $\mathcal E_{m,\lambda}=(E_0,\infty)$ for a unique $E_0>0$. The displayed function tends to $+\infty$ as $E\downarrow E_0$ because its denominator tends to zero, and it tends to $+\infty$ as $E\to\infty$ because $B(E)\to\infty$.
It therefore attains its minimum. At the minimizing energy, equality gives a point of $\Pi_{m,n}(L)$; for larger $\mu$, the length is below $L$ at that energy and Theorem~\ref{thm:threshold} applies.
\end{proof}

Define
\[
    \lambda_m^*:=\left(\frac{m\pi_p}{L}\right)^p,
    \qquad
    \mu_n^*:=\left(\frac{n\pi_q}{L}\right)^q.
\]

\begin{theorem}[Domain, monotonicity, and asymptotes]
\label{thm:boundary_asymptotes}
Let $(m,n)\in\mathcal I_{\mathrm{sc}}$. Then:
\begin{enumerate}[label=\textup{(\roman*)}]
\item $\mu^*_{m,n}(\lambda)<\infty$ if and only if
$\lambda>\lambda_m^*$;
\item $\lambda\mapsto\mu^*_{m,n}(\lambda)$ is nonincreasing on
$(\lambda_m^*,\infty)$;
\item $\mu^*_{m,n}(\lambda)\to\infty$ as $\lambda\downarrow\lambda_m^*$;
\item $\mu^*_{m,n}(\lambda)\to\mu_n^*$ as $\lambda\to\infty$.
\end{enumerate}
\end{theorem}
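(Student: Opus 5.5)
The plan is to work directly with the function
\[
    F_\lambda(E):=\left[\frac{2nB(E)}{L-2m\lambda^{-1/p}A(E)}\right]^q,
    \qquad E\in\mathcal E_{m,\lambda},
\]
so that $\mu^*_{m,n}(\lambda)=\inf_{E\in\mathcal E_{m,\lambda}}F_\lambda(E)$. All four items will follow from the monotonicity and endpoint limits of $A$ and $B$ in Propositions~\ref{prop:AB_monotone} and~\ref{prop:AB_limits}, together with the attainment statement of Theorem~\ref{thm:boundary_formula}.

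For (i), $\mu^*_{m,n}(\lambda)<\infty$ exactly when $\mathcal E_{m,\lambda}\neq\varnothing$. That is, we need some $E$ with $A(E)<\tfrac{L}{2m}\lambda^{1/p}$. Since $A$ decreases strictly onto $(\pi_p/2,\infty)$, this happens if and only if $\tfrac{L}{2m}\lambda^{1/p}>\pi_p/2$, which is $\lambda>\lambda_m^*$.

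For (ii), take $\lambda_1<\lambda_2$. Then $\lambda^{-1/p}A(E)$ decreases in $\lambda$, so $\mathcal E_{m,\lambda_1}\subset\mathcal E_{m,\lambda_2}$ and $F_{\lambda_2}\le F_{\lambda_1}$ pointwise on $\mathcal E_{m,\lambda_1}$. Taking infima gives $\mu^*_{m,n}(\lambda_2)\le\mu^*_{m,n}(\lambda_1)$.

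For (iv), I would prove two bounds.
\begin{itemize}
\item \emph{Lower bound.} For every admissible $E$ the denominator is less than $L$ and $B(E)>\pi_q/2$. Hence $F_\lambda(E)>(n\pi_q/L)^q=\mu_n^*$, and so $\mu^*_{m,n}(\lambda)\ge\mu_n^*$.
\item \emph{Upper bound.} Fix $E>0$. As $\lambda\to\infty$, $E$ becomes admissible and $F_\lambda(E)\to(2nB(E)/L)^q$. So $\limsup_{\lambda\to\infty}\mu^*_{m,n}(\lambda)\le(2nB(E)/L)^q$ for every $E$. Letting $E\downarrow0$ and using $B(E)\to\pi_q/2$ gives $\limsup\le\mu_n^*$.
\end{itemize}

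Item (iii) is the main obstacle. The infimum is taken over a set $(E_0(\lambda),\infty)$ whose left endpoint, defined by $A(E_0)=\tfrac{L}{2m}\lambda^{1/p}$, tends to $\infty$ as $\lambda\downarrow\lambda_m^*$, because $A\to\pi_p/2$ only as $E\to\infty$. The argument will therefore use that every admissible $E$ is large.
\begin{itemize}
\item For $E\in\mathcal E_{m,\lambda}$ we have $E>E_0(\lambda)$, so $B(E)\ge B(E_0(\lambda))$. This tends to $\infty$ as $\lambda\downarrow\lambda_m^*$, by Proposition~\ref{prop:AB_limits}.
\item Meanwhile the denominator $L-2m\lambda^{-1/p}A(E)$ is positive on $\mathcal E_{m,\lambda}$ and bounded above by $L-2m\lambda^{-1/p}\pi_p/2$. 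This bound tends to $0$ as $\lambda\downarrow\lambda_m^*$, and in any case it is at most $L$.
\item Hence $F_\lambda(E)\ge\bigl(2nB(E_0(\lambda))/L\bigr)^q$ uniformly on the admissible set, and so $\mu^*_{m,n}(\lambda)\to\infty$.
\end{itemize}
The one point to check carefully is that $E_0(\lambda)\to\infty$. This follows because $A$ is a continuous strictly decreasing bijection onto $(\pi_p/2,\infty)$, so $E_0=A^{-1}\bigl(\tfrac{L}{2m}\lambda^{1/p}\bigr)$ and the argument of $A^{-1}$ decreases to $\pi_p/2$.
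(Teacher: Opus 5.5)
Your proposal is correct and follows the paper's proof essentially step for step. Items (i) and (ii) use the same arguments: the nonemptiness criterion for $\mathcal E_{m,\lambda}$, and the inclusion plus pointwise comparison. The two lower bounds also match: $B(E)\ge B(E_0(\lambda))$ with $E_0(\lambda)\to\infty$ in (iii), and $\mu^*_{m,n}(\lambda)\ge\mu_n^*$ in (iv).

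The only difference is the upper bound in (iv). You fix $E$, let $\lambda\to\infty$, and then send $E\downarrow0$, while the paper uses the single $\lambda$-dependent energy $E_\lambda$ with $A(E_\lambda)=\lambda^{1/(2p)}$. Both are valid, and your iterated-limit version is slightly simpler because it avoids inverting $A$.
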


\begin{proof}
The admissible set is nonempty exactly when
\[
    2m\lambda^{-1/p}\inf_{E>0}A(E)<L.
\]
Since $\inf A=\pi_p/2$, this is equivalent to
$\lambda>\lambda_m^*$.

If $\lambda_m^*<\lambda_1<\lambda_2$, then $\mathcal E_{m,\lambda_1}\subset\mathcal E_{m,\lambda_2}$ and the denominator in \eqref{eq:mu_star} is larger for $\lambda_2$. Taking infima proves monotonicity.

For the vertical asymptote, let $E_0(\lambda)$ be defined by
$L=2m\lambda^{-1/p}A(E_0)$. Then $E_0(\lambda)\to\infty$ as
$\lambda\downarrow\lambda_m^*$. Since $B$ is increasing,
\[
    \mu^*_{m,n}(\lambda)
    \ge
    \left[\frac{2nB(E_0(\lambda))}{L}\right]^q
    \to\infty.
\]

For the horizontal asymptote, $B(E)>\pi_q/2$ and the denominator in
\eqref{eq:mu_star} is below $L$, so $\mu^*_{m,n}(\lambda)\ge\mu_n^*$.

For the reverse bound, use the fact that $A$ is continuous, strictly decreasing, and maps $(0,\infty)$ onto $(\pi_p/2,\infty)$. For every sufficiently large $\lambda$, let $E_\lambda>0$ be the unique energy satisfying
\[
    A(E_\lambda)=\lambda^{1/(2p)}.
\]
Since $A(E_\lambda)\to\infty$, the endpoint behavior of $A$ implies
$E_\lambda\downarrow0$. Moreover,
\[
    \lambda^{-1/p}A(E_\lambda)=\lambda^{-1/(2p)}\longrightarrow0,
\]
so $E_\lambda\in\mathcal E_{m,\lambda}$ for all sufficiently large $\lambda$. Also $B(E_\lambda)\to\pi_q/2$. Substitution into \eqref{eq:mu_star} gives
\[
\begin{aligned}
    \limsup_{\lambda\to\infty}\mu^*_{m,n}(\lambda)
    &\le
    \lim_{\lambda\to\infty}
    \left[
      \frac{2nB(E_\lambda)}
      {L-2m\lambda^{-1/p}A(E_\lambda)}
    \right]^q\\
    &=\left(\frac{n\pi_q}{L}\right)^q
    =\mu_n^*.
\end{aligned}
\]
Combining the lower and upper bounds proves the limit.
\end{proof}

\section{Numerical Illustration of the Spectral Regions}
\label{sec:numerical_region}

We illustrate the difference between the first sign-changing Fu\v{c}ik
branch for the homogeneous $p$-Laplacian and the corresponding spectral
region for \eqref{eq:model}. The numerical plots illustrate the characterization in
Theorem~\ref{thm:threshold} and the boundary description in
Theorem~\ref{thm:boundary_formula}; they are not used in the proofs.
Throughout this section we take
\[
    p=3,\qquad q=2,\qquad L=1,
\]
and consider the nodal class $(m,n)=(1,1)$, consisting of one positive
and one negative nodal interval.

\begin{figure}[htbp]
    \centering
    \begin{minipage}[t]{0.49\textwidth}
        \centering
        \includegraphics[width=\linewidth]{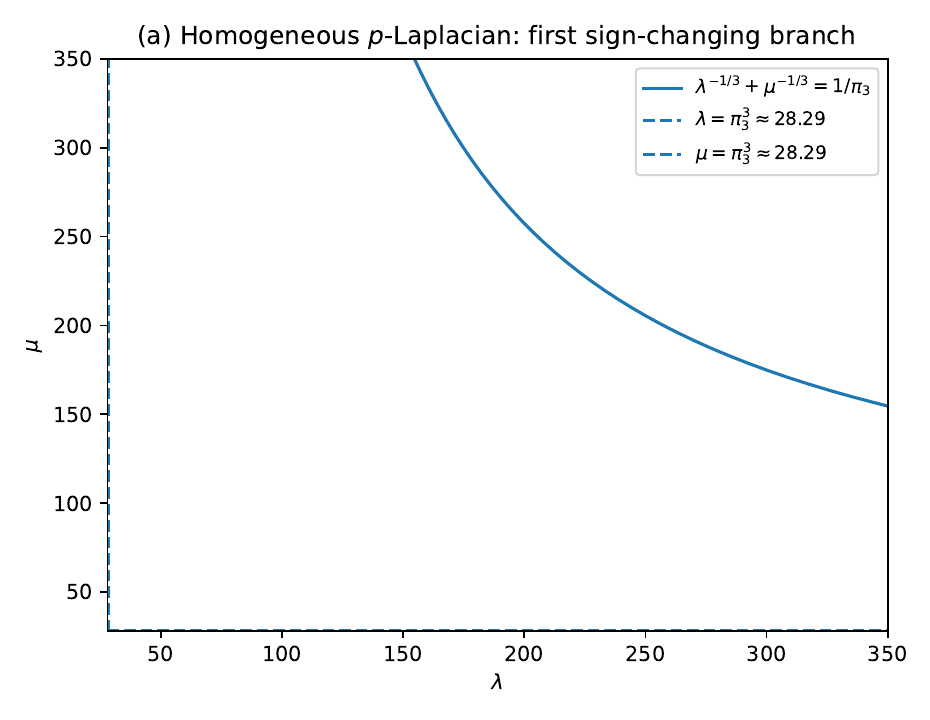}
    \end{minipage}\hfill
    \begin{minipage}[t]{0.49\textwidth}
        \centering
        \includegraphics[width=\linewidth]{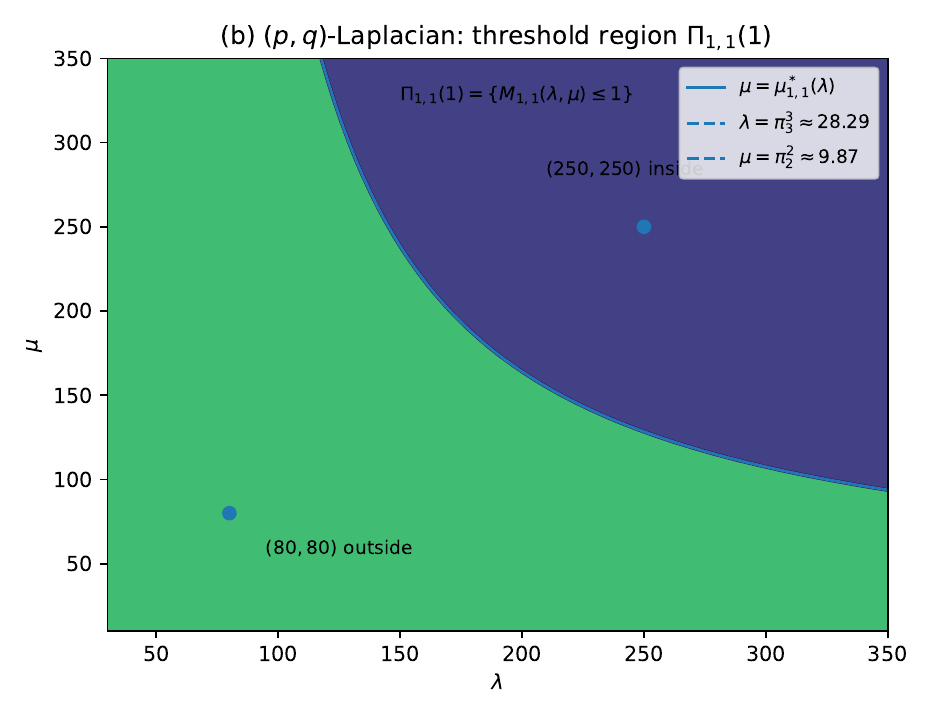}
    \end{minipage}
    \caption{Comparison of the first sign-changing nodal class for
    $L=1$. Panel (a) shows the one-dimensional Fu\v{c}ik curve
    $\lambda^{-1/3}+\mu^{-1/3}=1/\pi_3$ for the homogeneous
    $p$-Laplacian with $p=3$. Panel (b) shows the threshold region
    $\Pi_{1,1}(1)$ for the nonhomogeneous $(p,q)$-Laplacian with
    $(p,q)=(3,2)$. In panel (b), the region above the solid boundary
    $\mu=\mu^*_{1,1}(\lambda)$ satisfies $M_{1,1}(\lambda,\mu)\le1$;
    the region below it satisfies $M_{1,1}(\lambda,\mu)>1$.
    The marked points show parameter values for which a solution with
    one positive and one negative hump exists or does not exist.
    A point below the boundary may still give a one-sign solution.}
    \label{fig:first_nodal_curve_vs_region}
\end{figure}

\subsection*{The homogeneous $p$-Laplacian branch}

For comparison, consider
\[
    -(\phi_3(u'))'
    =\lambda(u^+)^2-\mu(u^-)^2,
    \qquad u(0)=u(1)=0.
\]
For the homogeneous $p$-Laplacian, the first sign-changing branch
is a curve \cite{CuestaFigueiredoGossez1999}. As in Section~10.3,
the lengths of the positive and negative humps are independent of
their amplitude. For $p=3$ and $L=1$, one hump of each sign gives
\begin{equation}\label{eq:hom_first_nodal_curve}
    \pi_3\lambda^{-1/3}+\pi_3\mu^{-1/3}=1,
    \qquad\text{or equivalently}\qquad
    \lambda^{-1/3}+\mu^{-1/3}=\frac{1}{\pi_3}.
\end{equation}
This is the curve shown in panel (a).

\subsection*{The nonhomogeneous $(p,q)$-Laplacian region}

For the asymmetric problem \eqref{eq:model} with $(p,q)=(3,2)$, the
kinetic-energy function becomes
\[
    h(\xi)=\frac{2}{3}\xi^3+\frac12\xi^2,
    \qquad \eta=h^{-1}.
\]
The parameter-free time maps are
\[
\begin{aligned}
    A(E)&=3^{1/3}E^{1/3}
      \int_0^1\frac{dt}{\eta\bigl(E(1-t^3)\bigr)},\\
    B(E)&=2^{1/2}E^{1/2}
      \int_0^1\frac{dt}{\eta\bigl(E(1-t^2)\bigr)}.
\end{aligned}
\]
The length condition for one positive and one negative hump now
depends on the common energy $E>0$:
\begin{equation}\label{eq:numerical_first_length}
    \mathcal L_{1,1}(E;\lambda,\mu)
    =2\lambda^{-1/3}A(E)
     +2\mu^{-1/2}B(E)=1.
\end{equation}
We evaluate the two time maps numerically and minimize
the length expression on the left of
\eqref{eq:numerical_first_length} over positive energies to obtain
\[
    M_{1,1}(\lambda,\mu)
    =\min_{E>0}\left[
       2\lambda^{-1/3}A(E)+2\mu^{-1/2}B(E)
    \right].
\]
Theorem~\ref{thm:threshold} states that
\[
    \Pi_{1,1}(1)
    =\{(\lambda,\mu)\in(0,\infty)^2:
       M_{1,1}(\lambda,\mu)\le1\}.
\]
The filled set in panel (b) therefore represents the $(1,1)$-nodal
spectral region. Its solid boundary is described by
Theorem~\ref{thm:boundary_formula} and, in this example, is obtained
from
\begin{equation}\label{eq:numerical_mu_star}
    \mu^*_{1,1}(\lambda)
    =\inf_{\substack{E>0\\2\lambda^{-1/3}A(E)<1}}
      \left[
        \frac{2B(E)}{1-2\lambda^{-1/3}A(E)}
      \right]^2.
\end{equation}
The curve $\mu=\mu^*_{1,1}(\lambda)$ is the boundary of this
region. For each admissible $\lambda$, every
$\mu\ge\mu^*_{1,1}(\lambda)$ gives a solution with one positive
and one negative hump. Thus panel (a) shows a spectral curve,
whereas panel (b) shows a spectral region.

The two marked points illustrate this distinction.
Numerically, $M_{1,1}(80,80)$ is approximately $1.38$, which is
strictly greater than $L=1$. Hence $(80,80)$ does not admit a
sign-changing solution with exactly one positive and one negative
hump. In contrast, $M_{1,1}(250,250)$ is approximately $0.89<1$.
By Theorem~\ref{thm:threshold}, there is a positive energy for which
\eqref{eq:numerical_first_length} holds. The corresponding positive
and negative full humps can then be joined to give a Dirichlet
solution of this nodal type. Since the length function tends to $+\infty$ at both ends of the
energy interval, there are at least two such energies when
$M_{1,1}(\lambda,\mu)<1$.

A parameter pair outside $\Pi_{1,1}(1)$ may still belong to the
full spectrum $\Sigma_{3,2}(1)$. For example,
$(80,80)$ lies in both one-sign regions $\mathcal P_+(1)$ and
$\mathcal P_-(1)$, because $80>\pi_3^3$ and $80>\pi_2^2$.
Thus $(80,80)$ gives both a positive solution and a negative solution,
even though it does not give a solution with one hump of each sign.

Finally, Theorem~\ref{thm:boundary_asymptotes} gives the asymptotic
geometry of the boundary:
\[
\begin{aligned}
    \lambda_1^*&=\pi_3^3\approx28.29,
      &\mu^*_{1,1}(\lambda)&\longrightarrow+\infty
        &&\text{as }\lambda\downarrow\lambda_1^*,\\
    \mu_1^*&=\pi_2^2=\pi^2\approx9.87,
      &\mu^*_{1,1}(\lambda)&\longrightarrow\mu_1^*
        &&\text{as }\lambda\to\infty.
\end{aligned}
\]
Panel (b) shows an intermediate portion of this boundary.
The asymptotes lie outside the displayed plotting window.
These numerical results illustrate the boundary formula and limits
proved in Section~12.

\nocite{*}
\bibliographystyle{plain}
\bibliography{references}

\end{document}